\def\@tocline#1#2#3#4#5#6#7{\relax
  \ifnum #1>\c@tocdepth 
  \else
    \par \addpenalty\@secpenalty\addvspace{#2}%
    \begingroup \hyphenpenalty\@M
    \@ifempty{#4}{%
      \@tempdima\csname r@tocindent\number#1\endcsname\relax
    }{%
      \@tempdima#4\relax
    }%
    \parindent\z@ \leftskip#3\relax \advance\leftskip\@tempdima\relax
    \rightskip\@pnumwidth plus3em \parfillskip-\@pnumwidth
    #5\leavevmode\hskip-\@tempdima
      \ifcase #1
       \or\or \hskip 1em \or \hskip 2em \else \hskip 3em \fi%
      #6\nobreak\relax
    \dotfill\hbox to\@pnumwidth{\@tocpagenum{#7}}\par
    \nobreak
    \endgroup
  \fi}
\newtheorem{thm}{Theorem}[section]
\newtheorem{propo}[thm]{Proposition}
\newtheorem{cor}[thm]{Corollary}
\theoremstyle{definition}
\newtheorem{de}[thm]{Definition}         
\newtheorem{example}[thm]{Example}
\theoremstyle{remark}
\newtheorem{rmk}[thm]{Remark}
\newcommand{\CC}{\mathds{C}}
\newcommand{\RR}{\mathds{R}}
\newcommand{\NN}{\mathds{N}}
\newcommand{\ZZ}{\mathds{Z}}
\newcommand{\QQ}{\mathds{Q}}
\newcommand{\KK}{\mathds{K}}
\newcommand{\PP}{\mathds{P}}
\newcommand{\A}{\mathcal{A}}
\newcommand{\B}{\mathcal{B}}
\newcommand{\Q}{\mathcal{Q}}
\newcommand{\V}{\mathcal{V}}
\newcommand{\C}{\mathcal{C}}
\newcommand{\D}{\mathcal{D}}
\newcommand{\F}{\mathcal{F}}
\newcommand{\M}{\mathcal{M}}
\newcommand{\N}{\mathcal{N}}
\newcommand{\I}{\mathcal{I}}
\newcommand{\E}{\mathcal{E}}
\newcommand{\K}{\mathcal{K}}
\newcommand{\R}{\mathcal{R}}
\renewcommand{\S}{\mathcal{S}}
\renewcommand{\L}{\mathcal{L}}
\newcommand{\inv}{^{-1}}
\newcommand{\set}[1]{\left\{ #1 \right\}}
\renewcommand{\epsilon}{\varepsilon}
\newcommand{\m}{\mathfrak{m}}
\newcommand\PR{\RR\PP}
\newcommand\PC{\CC\PP}
\newcommand\PK{\KK\PP}
\newcommand{\PCc}{\Check{\PC}}
\newcommand{\PKc}{\Check{\PK}}
\DeclareMathOperator{\HH}{H}
\DeclareMathOperator{\Aut}{Aut}
\DeclareMathOperator{\corank}{corank}
\DeclareMathOperator{\depth}{\overline{depth}}
\DeclareMathOperator{\Sl}{sl}
\DeclareMathOperator{\pl}{pl}
\DeclareMathOperator{\ch}{ch}
\DeclareMathOperator{\PGL}{PGL}
\begin{document}

\title{Configurations of points and topology of real line arrangements}
\author{Beno\^it Guerville-Ball\'e}

\author{Juan Viu-Sos}
\address{
Instituto de Ci\^encias Matem\'aticas e de Computa\c c\~ao,
Universidade de S\~ao Paulo,
Avenida Trabalhador Sancarlense, 400 - Centro,
S\~ao Carlos - SP, 13566-590, Brasil
}
\email{benoit.guerville-balle@math.cnrs.fr, jviusos@math.cnrs.fr}

\thanks{The first author is supported by a JSPS postdoctoral grant. The second author is partially supported by MTM2013-45710-C02-01-P and Grupo Geometr\'ia of Gobierno de Arag\'on/Fondo Social Europeo.}				

\subjclass[2010]{
52B30, 
52C35, 
32Q55, 
54F65, 
32S22 
}		

\begin{abstract}
	A central question in the study of line arrangements in the complex projective plane $\PC^2$ is the following: when does the combinatorial data of the arrangement determine its topological properties? In the present work, we introduce a topological invariant of complexified real line arrangements, called the \emph{chamber weight}. This invariant is based on the weight counting over the points of the associated dual configuration, located in particular chambers of the real projective plane $\PR^2$.
	
	Using this dual setting, we construct several examples of complexified real line arrangements with the same combinatorial data and different embeddings in $\PC^2$ (i.e. Zariski pairs) which are distinguished by this invariant. In particular, we obtain new Zariski pairs of 13, 15 and 17 lines defined over $\QQ$ and containing only double and triple points. For each one of our examples, we derive some degenerations containing points of multiplicity 2, 3 and 5, which are also Zariski pairs.
	
	We compute explicitly the moduli space of the combinatorics of one of these examples, and prove that it has exactly two connected components. We also obtain three geometric characterizations of these components: the existence of two smooth conics, one tangent to six lines and the other containing six triple points, as well as the collinearity of three specific triple points.
\end{abstract}

\maketitle

\setcounter{tocdepth}{1}
\tableofcontents

\newpage

\section*{Introduction}
A line arrangement $\A$ is a finite collection of distinct lines in $\PC^2$. A classical problem in algebraic geometry related to this object is the study of its topology, defined as the homeomorphism type of the pair $(\PC^2,\A)$. Even if line arrangements are the simplest type of reducible plane curves, their topology is still not well understood. Currently many open questions in this field deal with the influence of the combinatorial type (i.e. the intersection lattice) on the topology. It is easy to prove that topology determines combinatorics, but the converse is shown to be false by Rybnikov~\cite{Rybnikov}. He showed that the fundamental group of the complement $\PC^2\setminus\A$ is not determined by the combinatorics of $\A$. Following Artal in~\cite{Artal:couples}, two line arrangements with the same combinatorics but different topologies are called a \emph{Zariski pair}.

Historically, this problem was first considered in the more general case of algebraic plane curves in $\PC^2$ by Zariski in~\cite{zariski,zariski1,zariski2}, who constructed two irreducible sextics with six ordinary cusps and possessing the same combinatorics, but different topologies. The difference between the embeddings of these sextics is exhibited by a geometrical property: in one sextic, its six singular points lie on a conic, whereas in the other one they do not (see~\cite{ACT:survey} for a survey on Zariski pairs of algebraic curves).\\

In general, it is difficult to determine if two arrangements have different homomorphism types. Finite presentations of the fundamental group of the complement of line arrangements can be obtained by various methods (see~\cite{arvola} and~\cite{Moishezon,zariski,vanKampen}). 
However, determining whether two finite presented groups are isomorphic is in general an \emph{undecidable} problem.

Studying what topological properties are combinatorially determined is becoming an increasing focus of research. For instance, the first Betti numbers and the monodromy of the Milnor fiber, twisted cohomology and characteristic varieties are not known to be combinatorially determined (see~\cite{CS:charcateristic, DP:Hypersurface_complement, Libgober:Monodromy, Yoshinaga:milnor_fibers}). However, the (co)homology ring of the complement $\PC^2\setminus\A$ is known to be of combinatorial nature~\cite{OS:combinatorics}.\\

Although there has been active research on this topic, only three examples of Zariski pairs of line arrangements are known. As stated above, the first known example is due to Rybnikov and appeared in 1994. Rybnikov constructed a pair of arrangements by gluing together first two positive and then one positive and one negative MacLane arrangements. He proved that their fundamental groups are not isomorphic by means of an invariant based on the lower central series. Both arrangements contain only double and triple singular points and equations (of each line) can be defined over $\QQ(\zeta_3)$, where $\zeta_3$ is a primitive third-root of unity, but not over $\QQ$. 

The second example of a Zariski pair is obtained by Artal, Carmona, Cogolludo, and Marco~\cite{ACCM:real_ZP}. It is a pair of arrangements of 11 lines containing points of multiplicity at most five and whose equations can be found to be Galois-conjugated in $\QQ(\sqrt{5})$. Their topological type can be distinguished by an invariant of the pseudo-Coxeter element of the braid monodromy. The fact that they are Galois-conjugated implies that the pair cannot be detected by algebraic methods~\cite[Remark~5.4]{ACCM:real_ZP}. Again, these arrangements cannot have rational equations.

The last example was found by the first author in~\cite{Guerville:ZP}, and it was detected using the linking invariant $\I$ (see~\cite{AFG:invariant,GM:invariant}) which is an adaptation of the linking number of the braid monodromy. These are arrangements of 12 lines containing points of multiplicity at most five. They can be realized with equations which are Galois-conjugated in $\QQ(\zeta_5)$, where $\zeta_5$ is a primitive fifth-root of unit. Moreover, their fundamental groups are non-isomorphic as is later proved by Artal, Cogolludo, Marco and the first author~\cite{ACGM:ZP}. Once again, these arrangements are not rational.

It is worth mentioning that the use of a computer was necessary to detect all of these Zariski pairs due to the increasing complexity of the computations resulting from  the high number of lines and the multiplicity of their singularities. Note also that, until now, no known Zariski pair of line arrangements admits realizations over the rational numbers.\\

An interesting family of line arrangements is formed by the \emph{complexified real arrangements}, i.e. those for which a coordinate system of $\PC^2$ exists such that any line of the arrangement is defined by an $\RR$-linear form. In this case, the real picture of this kind of arrangements determines, not only the fundamental groups~\cite{arvola}, or the abelian covers~\cite{Eriko}, but all the topological information~\cite{Car_Thesis}. Using this, several methods and algorithms are developed to compute topological invariants of complexified real arrangements (see~\cite{Yoshinaga:milnor_fibers}, \cite{Yoshinaga:resonant_bands} and more recently~\cite{BS:real_monodromy}).

It is natural to ask whether or not a certain topological invariant is combinatorially determined for complexified real arrangements (even if it is not determined in the general complex case): for instance, the linking invariant $\I$,  introduced by Artal, Florens and the first author in~\cite{AFG:invariant}. Heuristically, even if complexified real arrangements seem to have no ``linking information'', no formal arguments are known to confirm or refute this hypothesis. It is interesting to note that the only known Zariski pair of complexified real arrangements is that of Artal, Carmona, Cogolludo, and Marco, which cannot be distinguished by the $\I$-invariant or by any other known invariants other than that introduced in~\cite{ACCM:real_ZP}. This illustrates the difficulty of distinguishing such Zariski pairs.\\

In the present work, we consider the question of recognizing Zariski pairs of complexified real arrangements in a more geometric way. Our main tool to deal with this question is the study of the $\I$-invariant from a dual point of view, i.e. with configurations of points in the dual real projective plane. More precisely, we introduce the notion of $(t,m)$-configuration as a set of points (containing $t$ marked points) in $\PR^2$ in such a way that the complexified dual arrangement has optimal properties in order to use the $\I$-invariant. These configurations are provided with a \emph{plumbing} map associating to each point a weight modulo~$m$.

From the previous data, we define the \emph{chamber weight} of the $(3,m)$-configuration by counting the weight of the points located in a given chamber of $\PR^2$ (determined by the $3$~marked points). As stated in Theorem~\ref{thm:invariant} and Corollary~\ref{cor:invariant}, the value of the chamber weight is a topological invariant of the dual line arrangement of the configuration. In addition, it is related to the $\I$-invariant as proven in Theorem~\ref{thm:relation}. It is worth noticing how simple the computation of this invariant is. Since this construction is elementary, it makes sense to ask about the effectiveness of this invariant for complexified real arrangements. We illustrate this by the distinction of several new Zariski pairs.

Indeed, we present three examples of Zariski pairs of complexified real arrangements by constructing different combinatorially-equivalent $(3,2)$-configurations with distinct chamber weights. The first example obtained by this method contains 13~lines defined over $\QQ$ and  only double and triple points. In addition, these arrangements are not rigid, allowing us to construct two distinct degenerations that admit singular points of multiplicity~5. Using the same kind of construction and arguments, we also construct Zariski pairs of complexified real arrangements with~15 and 17~lines. For each one, we derive two or three degenerations which are also Zariski pairs containing points of multiplicity~5. Summarizing, we present 10~Zariski pairs of line arrangements with rational equations and which are topologically distinguished using this geometrical invariant.

Moreover, we explicitly compute their \emph{moduli space} (i.e. the space of all arrangements having a given combinatorics) of the degeneration with two quintuple points of the Zariski pair with 13~lines. This allows us to prove that this moduli space has exactly two connected components, each one corresponding to an arrangement of the Zariski pair. From this, we obtain three Zariski-like geometric characterizations of these connected components:
\begin{itemize}
	\item \emph{Tangency to a conic}: In the first connected component of the moduli space, six lines of the arrangement are tangent to a smooth conic, while they are not in the second one.
	\item \emph{Singularities in a conic}: In addition, six triple points of the arrangement are contained in a smooth conic in the first component, but this is not true in the second one.
	\item \emph{Alignment of singularities}: Three triple points are aligned in any arrangement of the first one, but this does not hold in the second connected component.
\end{itemize}
To our knowledge, this is the first time that such a geometric characterization is revealed for Zariski pairs of line arrangements. Moreover, they are the first examples of Zariski pairs of rational line arrangements.\footnote{An appendix containing detailed figures of our first Zariski pair with 13 lines as well as those of its degeneration with two quintuple points can be found in the authors' websites.}


In an upcoming paper~\cite{GBVS:real_pi1}, the authors prove that some of the Zariski pairs presented in this paper can be distinguished by their fundamental groups. More precisely, the corresponding lower central factors differ by a torsion element. These computations give a negative answer to \textsc{Question}~8.7 of~\cite{Suciu}. Unfortunately, the cohomology with twisted coefficients and the Betti numbers or the monodromy of the Milnor fiber do not differ in these pairs.\\

In our examples of Zariski pairs, the simplicity of computation of the chamber weight and the possibility of constructing other examples using the present method are a significant progress for the fundamental questions relating combinatorial data and topology of line arrangements. As an illustration of these possibilities, at the end of the paper we give a sketch of an alternative proof of Artal's result~\cite{artal:char_var} stating that the characteristic varieties are not determined by the weak combinatorics. We prove this result by using simultaneously the notion of chamber weight and Artal's construction~\cite{artal:position}.

The paper is organized as follows:\\
In Section~\ref{sec:configuration}, we define the notion of $(t,m)$-configuration and we introduce our topological invariant: the chamber weight of a $(3,m)$-configuration. Our main invariance theorem is also stated together with a useful corollary. Section~\ref{sec:ZP} recalls aspects of the topology of line arrangements, but it is mainly devoted to the study of the first example of Zariski pair with 13 lines.  The degenerations of this Zariski pair are given in Section~\ref{sec:deg}. We show in Section~\ref{sec:MS} that the moduli space of one of the previous Zariski pairs has two connected components which can be distinguished by geometrical properties. A list of other examples of Zariski pairs and degenerations are described in Section~\ref{sec:list}. After reviewing  the $\I$-invariant, all the proofs related to our main results are presented in Section~\ref{sec:proof}. \\
%
%
In the appendix, we provide detailed pictures of the Zariski pair with 13 lines possessing at most triple points as well as those of its degeneration with two quintuple points.\\

\noindent\textbf{Acknowledgments.} 
The main part of this work was carried out during the second author's visit to Japan. He would like to thank Tokyo Gakugei University, as well as the first author and his wife for their hospitality and the grant {\sc MTM2013-45710-C02-01-P} for the travel support. Both authors would like to thank Hokkaido University and the organizers of the \emph{Summer Conference on Hyperplane Arrangements in Sapporo 2016}, as important advances of the present work were made during this week. In particular, we are grateful to Prof.~Yoshinaga and Prof.~Falk for rewarding discussions. We also would like to thank Prof.~Artal and Prof.~Cogolludo for the subsequent fruitful comments, in particular those about the properties of these new Zariski pairs.

\bigskip
\section{Configurations of points and dual arrangements}\label{sec:configuration}
In the present section, we define our main object: the $(t,m)$-configurations. We draw attention to the fact that the definition of configuration considered here is slightly different from the one of Gr\"unbaum in~\cite{Grunbaum}. After recalling some definitions about the topology of line arrangements (see also~\cite{OrlikTerao92,Dimca:book}), we define the \emph{chamber weight} of a configuration and then we state in Theorem~\ref{thm:invariant} that this is a topological invariant of the associated dual arrangement.

\subsection{Configurations}\mbox{}

For $P,Q$ points in $\PR^2$, denote by $(P,Q)$ the line passing through $P$ and $Q$.

\begin{de}\label{def:configuration}
	A \emph{$(t,m)$-configuration} $\C$ is the data $(\V,\S,\L,\pl)$ composed of two finite sets of points $\V=\set{V_1,\dots,V_t}$ and $\S=\set{S_1,\dots,S_n}$ of $\PR^2$; a finite set of lines $\L=\set{(S,V)\mid S\in\S, V\in\V}$ in $\PR^2$ and a map $\pl:\V\sqcup\S\rightarrow \ZZ/m\ZZ$ with $\V=\pl^{-1}(0)$, such that:
	\begin{enumerate}
		\item for any $V_i,V_j\in\V$: $\S\cap (V_i,V_j)=\emptyset$,
		\item for any line $L\in\L$: $\sum\limits_{S\in L\cap\S} \pl(S) = 0$.
	\end{enumerate}
	The points in $\V$ (resp. in $\S$) are called the \emph{vertices} (resp. \emph{surrounding-points}) of $\C$. The map $\pl$ is called a \emph{$m$-plumbing} of $\V\sqcup\S$.
\end{de}

\pagebreak
\begin{rmk}\mbox{}
	\begin{enumerate}
		\item The first condition implies that $\V\cap\S=\emptyset$.
		\item Any line $L\in\L$ contains exactly one vertex and the multiplicity of $\L$ in any surrounding-point is exactly $t$. See Example~\ref{ex:configuration}.
	\end{enumerate}
\end{rmk}

\begin{de}
    A \emph{$(t,m)$-configuration} $\C=(\V,\S,\L,\pl)$ is called \emph{planar} if the projective subspace generated by the vertices $\V$ is the whole $\PR^2$.
\end{de}

A $(t,m)$-configuration is \emph{uniform} if its plumbing map $\pl$ is constant on $\S$, i.e. there exists an element $\zeta\in\ZZ/m\ZZ$ such that $\pl(S)=\zeta$, for any $S\in\S$.

\begin{rmk}
	Any $(3,2)$-configuration is necessarily uniform.
\end{rmk}

\begin{example}\label{ex:configuration}
	Examples of $(3,2)$, $(3,m)$ and $(4,2)$-configurations are given in Figure~\ref{fig:config_examples}. Remark that the dashed lines in the figures are not elements of $\L$, but they take an important role in our setting as we show in Section~\ref{subsec:thm}.
\end{example}
	
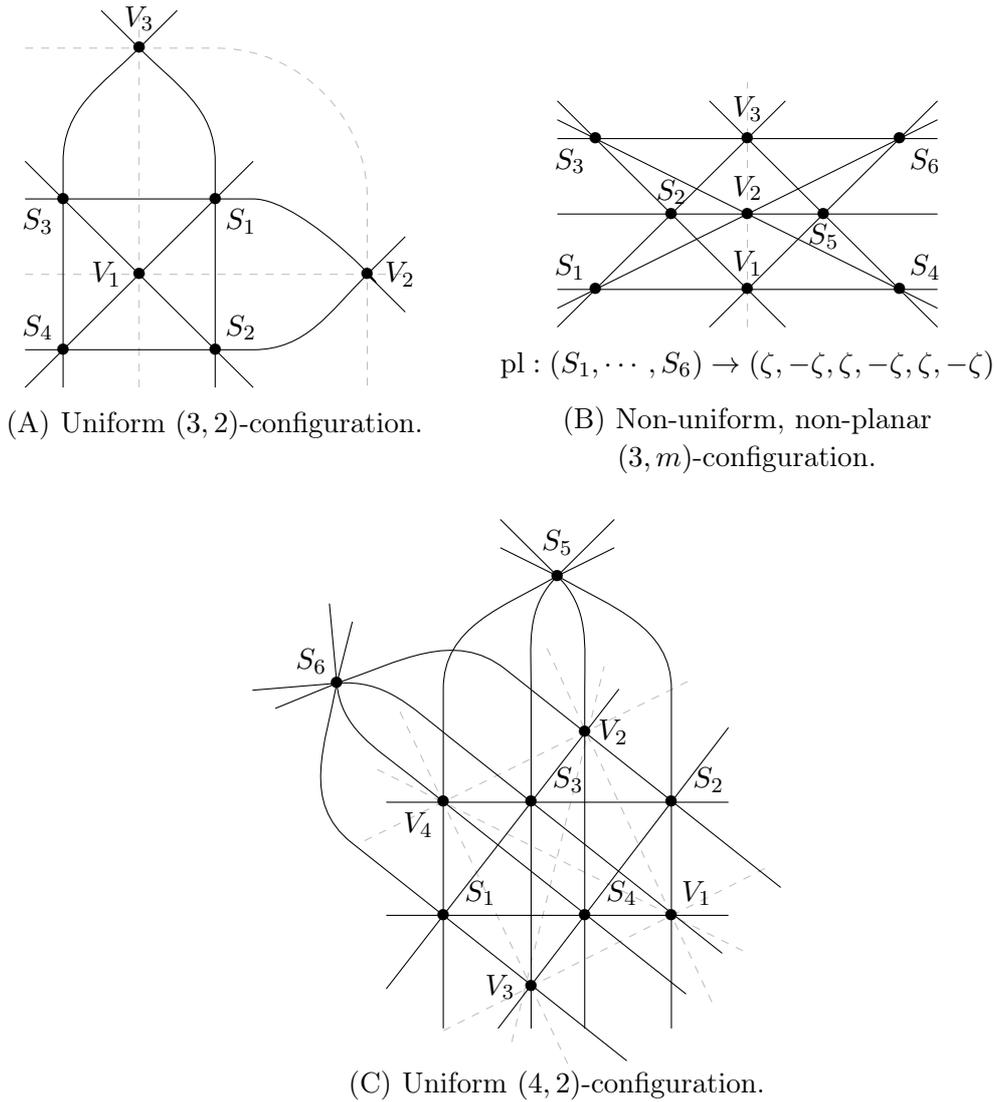
\begin{figure}
	\begin{tikzpicture}
		\begin{scope}[shift={(-4,0)}]
	\draw[dashed, color=gray!50] (-1.5,0) -- (3.5,0);
	\draw[dashed, color=gray!50] (0,-1.5) -- (0,3.5);
	\draw[dashed, color=gray!50] (-1.5,3) -- (1,3) to[out=0,in=90] (3,1) -- (3,-1.5);
	
	\draw (-1,-1.5) -- (-1,1.5) to[out=90,in=-135] (0,3) -- (0.5,3.5);
	\draw (1,-1.5) -- (1,1.5) to[out=90,in=-45] (0,3) -- (-0.5,3.5);
	\draw (-1.5,1) -- (1.5,1) to[out=0,in=-45] (3,0) -- (3.5,-0.5);
	\draw (-1.5,-1) -- (1.5,-1) to[out=0,in=-135] (3,0) -- (3.5,0.5);
	\draw (-1.5,1.5) -- (1.5,-1.5);
	\draw (-1.5,-1.5) -- (1.5,1.5); 

	\node at (0,0) {$\bullet$};
	\node[left] at (-0.1,0) {$V_1$};
	\node at (3,0) {$\bullet$};
	\node[right] at (3.1,0) {$V_2$};
	\node at (0,3) {$\bullet$};
	\node[above] at (0,3.1) {$V_3$};
	
	\node at (1,1) {$\bullet$};
	\node[below right] at (1,1) {$S_1$};
	\node at (1,-1) {$\bullet$};
	\node[above right] at (1,-1) {$S_2$};
	\node at (-1,1) {$\bullet$};
	\node[below left] at (-1,1) {$S_3$};
	\node at (-1,-1) {$\bullet$};
	\node[above left] at (-1,-1) {$S_4$};
	
	\node at (1,-2) {(A) Uniform $(3,2)$-configuration.};
		\end{scope}
		\begin{scope}[shift={(4,-0.2)}, rotate=90]
		\draw[dashed, color=gray!50] (-0.5,0) -- (2.75,0);

		\draw (0,-2.5) -- (0,2.5);
		\draw (1,-2.5) -- (1,2.5);
		\draw (2,-2.5) -- (2,2.5);
		\draw (-0.25,-2.5) -- (2.25,2.5);
		\draw (-0.25,2.5) -- (2.25,-2.5);
		\draw (-0.5,-0.5) -- (2.5,2.5);
		\draw (2.5,0.5) -- (-0.5,-2.5);
		\draw (-0.5,2.5) -- (2.5,-0.5);
		\draw (-0.5,0.5) -- (2.5,-2.5);
	
		\node at (0,0) {$\bullet$};
		\node[above] at (0.05,0) {$V_1$};
		\node at (1,0) {$\bullet$};
		\node[above] at (1.05,0) {$V_2$};
		\node at (2,0) {$\bullet$};
		\node[above] at (2.1,0) {$V_3$};
		
		\node at (0,2) {$\bullet$};
		\node[above left] at (0,2) {$S_1$};
		\node at (1,1) {$\bullet$};
		\node[above] at (1,1) {$S_2$};
		\node at (2,2) {$\bullet$};
		\node[below left] at (2,2) {$S_3$};
		
		\node at (0,-2) {$\bullet$};
		\node[above right] at (0,-2) {$S_4$};
		\node at (1,-1) {$\bullet$};
		\node[below] at (1,-1) {$S_5$};
		\node at (2,-2) {$\bullet$};
		\node[below right] at (2,-2) {$S_6$};
		
		\node at (-1,0) {$\pl:(S_1,\cdots,S_6)\rightarrow (\zeta,-\zeta,\zeta,-\zeta,\zeta,-\zeta)$};		
		
		\node at (-1.75,0) {(B) Non-uniform, non-planar};
		\node at (-2.25,0) {$(3,m)$-configuration.};
		\end{scope}
		\begin{scope}[shift={(0,-8.5)},scale=1.5]
		\draw[dashed, color=gray!50] (-0.69,0.66) -- (2.19,2.09);
		\draw[dashed, color=gray!50] (-0.38,1.8) -- (1.09,-1.31);
		\draw[dashed, color=gray!50] (0.91,2.36) -- (2.37,-0.8);
		\draw[dashed, color=gray!50] (0,-1.02) -- (2.87,0.44);
		\draw[dashed, color=gray!50] (-0.58, 1.29) -- (2.68, -0.34);
		\draw[dashed, color=gray!50] (0.6, -1.12) -- (1.41, 2.2);
		\draw (0,-1) -- (0,2) to[out=90,in=-150] (1,3) -- (1.5,3.25);
		\draw (0.77,-1) -- (0.77,2) to[out=90,in=-135] (1,3) -- (1.5,3.5);
		\draw (1.24,-1) -- (1.24,2) to[out=90,in=-45] (1,3) -- (0.5,3.5);
		\draw (2,-1) -- (2,2) to[out=90,in=-30] (1,3) -- (0.5,3.25);
		\draw (-0.5,0) -- (2.5,0);
		\draw (-0.5,1) -- (2.5,1);	
		\draw (-0.5,-0.65) -- (1.54,2);
		\draw (0.48,-1) -- (2.5,1.66);
		\begin{scope}[xscale=1.05,rotate=50,shift={(0,-1)}]
			\draw (0,-1) -- (0,2) to[out=90,in=-150] (1,3) -- (1.5,3.25);
			\draw (0.77,-1) -- (0.77,2) to[out=90,in=-135] (1,3) -- (1.5,3.5);
			\draw (1.24,-1) -- (1.24,2) to[out=90,in=-45] (1,3) -- (0.5,3.5);
			\draw (2,-1) -- (2,2) to[out=90,in=-30] (1,3) -- (0.5,3.25);
			\node at (1,3) {$\bullet$};
			\node[above left] at (1,3) {$S_6$};
		\end{scope}
	
		\node at (2,0) {$\bullet$};
		\node[above right] at (2,0) {$V_1$};
		\node at (1.24,1.62) {$\bullet$};
		\node[right] at (1.27,1.62) {$V_2$};
		\node at (0.77,-0.63) {$\bullet$};
		\node[left] at (0.72,-0.63) {$V_3$};
		\node at (0,1) {$\bullet$};
		\node[below left] at (0,1) {$V_4$};
		\node at (0,0) {$\bullet$};
		\node[above right] at (0.1,0) {$S_1$};
		\node at (2,1) {$\bullet$};
		\node[above right] at (2.1,1) {$S_2$};
		\node at (0.77,1) {$\bullet$};
		\node[above right] at (0.87,1) {$S_3$};
		\node at (1.24,0) {$\bullet$};
		\node[above right] at (1.34,0) {$S_4$};
		\node at (1,3) {$\bullet$};
		\node[above] at (1,3.1) {$S_5$};

		\node at (1,-1.5) {(C) Uniform $(4,2)$-configuration.};
		\end{scope}
	\end{tikzpicture}
	\caption{Examples of $(t,m)$-configurations\label{fig:config_examples}}
\end{figure}

We encode the combinatorial information given by a $(t,m)$-configuration using non-trivial collinearity between points.

\begin{de}
	The \emph{combinatorics} of a $(t,m)$-configuration $\C=(\V,\S,\L,\pl)$ is the collection of all triplets of collinear points in $\V\sqcup\S$. 
\end{de}

In order to simplify the notation of a combinatorics, if $k\geq 4$ different points $P_1,\dots,P_k$ in $\V\sqcup\S$ are collinear, we write the set $\set{P_1,\dots,P_k}$ instead of all the triplets contained in $\set{P_1,\dots,P_k}$. A set formed by exactly $k\in\NN$ collinear points of a configuration is called a \emph{$k$-point line}.

We say that two $(t,m)$-configurations $\C_1=(\V_1,\S_1,\L_1,\pl_1)$ and $\C_2=(\V_2,\S_2,\L_2,\pl_2)$ \emph{have the same combinatorics} if there exists a bijection between $\V_1\sqcup\S_1$ and $\V_2\sqcup\S_2$ respecting collinearity relations.

\begin{example}
	The combinatorics of the $(3,2)$-configuration of Figure~\ref{fig:config_examples}-(A) is given by
	\[
		\big\{\ \{V_1,S_1,S_4\},\ \{V_1,S_2,S_3\},\ \{V_2,S_1,S_3\},\ \{V_2,S_2,S_4\},\ \{V_3,S_1,S_2\},\ \{V_3,S_3,S_4\}\ \big\}.
	\]
\end{example}

\begin{rmk}
		The combinatorics of a $(t,m)$-configuration is not invariant by isotopy. It is possible to create an extra alignment of points during a deformation (see Section~\ref{sec:deg}).
\end{rmk}

\begin{de}
	Let $\C=(\V,\S,\L,\pl)$ be a $(t,m)$-configuration and $\K$ the associated combinatorics of $\C$.
	\begin{itemize}
		\item An \emph{automorphism of $\K$} is a bijection $\phi$ of $\V\sqcup\S$ which preserves $\K$. The group of such bijections is called the \emph{automorphism group of $\K$} and is denoted by $\Aut(\K)$.
		
		\item An automorphism of $\K$ is \emph{stabilizing} if $\phi(\V)=\V$ (equivalently, if $\phi(\S)=\S$). The subgroup of stabilizing automorphisms is denoted by $\Aut^\text{Stab}(\K)$.
		
		\item The configuration $\C$ is \emph{stable} if  $\Aut(\K)=\Aut^\text{Stab}(\K)$.
	\end{itemize}
\end{de}

\begin{rmk}
	The $(3,2)$-configuration of Example~\ref{ex:configuration} is stable, but the Pappus configuration (see Figure~\ref{fig:Pappus_nonPappus}--(A)) is not.
\end{rmk}

\subsection{Dual arrangements}\mbox{}

A \emph{complex line arrangement} in $\PC^2$ is a finite set of distinct projective lines $\A=\set{D_1,\dots,D_s}$. We say that $\A$ is a \emph{complexified real arrangement} if there exists a coordinate system of $\PC^2$ for which each line of $\A$ admits an equation with real coefficients.

\begin{de}
	The \emph{topology} of a line arrangement $\A$ in $\PC^2$ is the homeomorphism type of the pair $(\PC^2,\A)$. This topology is said to be \emph{ordered} if the homeomorphism respects a fixed order on $\A$. 
\end{de}

The abelian group $\HH_1(\PC^2\setminus\A;\ZZ)$ is generated by the meridians $\m_1,\dots,\m_s$ around the lines $D_1,\dots,D_s$, with (only) relation $\sum_{i=1}^s \m_i =0$. A \emph{character} $\xi$ of an arrangement $\A$ is a morphism from $\HH_1(\PC^2\setminus\A;\ZZ)$ to $\CC^*$, and it is defined by assigning a non-zero complex number to each meridian (or equivalently to each line) such that their sum is~$1$. The character $\xi$ is \emph{torsion} if for any $D\in\A$, there exists $r\in\NN^*$ such that $\xi(\m_D)^r=1$.\\

For an arbitrary field $\KK$, consider $\PKc^2=\{L \mid L\subset \PK^2\ \text{line}\}$ the \emph{dual projective space}, which is naturally isomorphic to the set of $\KK$-linear forms in $\KK^3$ modulo non-zero scalars. There is a natural correspondence between $\PK^2$ and $\PKc^2$ given by the duality $\KK^3\simeq (\KK^3)^*$ and respecting incidences, i.e. a point $P$ lies in a line $L$ if and only if $L^*\in P^*$. Note that, for any point $P$ and any line $L$, we have $(P^*)^*=P$ and $(L^*)^*=L$.

\begin{de}
	Let $\C=(\V,\S,\L,\pl)$ be a $(t,m)$-configuration. The \emph{dual plumbed arrangement (DPA)} associated to $\C$ is the triplet $(\A^\V,\A^\S,\xi)$, where:
	\begin{itemize}
		\item $\A^\V=\set{V_1^*\otimes\CC,\dots,V_t^*\otimes\CC}$ and $\A^\S=\set{S_1^*\otimes\CC,\dots,S_n^*\otimes\CC}$ are line arrangements in $\PC^2$,
		
		\item $\xi$ is a torsion character of $\A^\C=\A^\V\cup\A^\S$ assigning $1$ to any line of $\A^\V$ and $\exp\big( 2\pi i \pl(S)/m \big)$ to $S^*\otimes\CC\in\A^\S$.
	\end{itemize}
	The line arrangement $\A^\C$ is called the \emph{dual arrangement} of $\C$, and $\A^\V$ the \emph{support} of the dual plumbed arrangement. In order to simplify the notation, a line $V_i^*\otimes\CC$ (resp. $S_i^*\otimes\CC$) is denoted by $V_i^\bullet$ (resp. $S_i^\bullet$).
\end{de}

\begin{rmk}\mbox{}
	\begin{enumerate}
		\item The above notion of ``support'' is related to the support defined in~\cite{AFG:invariant}.
		\item By construction, any line arrangement obtained as a DPA of a $(t,m)$-configuration is a complexified real arrangement.
	\end{enumerate}
\end{rmk}

Note that the reverse of the operation described above is not well-defined in general, i.e. if we consider any couple of an arrangement and a character, then the dual configuration of points and the associated plumbing may not satisfy the last condition of Definition~\ref{def:configuration}. Nevertheless, this operation is well-defined for a particular type of arrangements described in Section~\ref{sec:proof}.

\subsection{Topological invariance of the chamber weight of configurations}\label{subsec:thm}\mbox{}

Our aim is to introduce a topological invariant of arrangements coming from $(t,m)$-configurations: the \emph{chamber weight} of a configuration. %
This number is computed from a configuration $\C$ and associated to the dual arrangement $\A^\C$ as a dual version of the $\I$-invariant. Thus, it is worth noticing that even if the chamber weight is only defined for configurations and thus associated to real complexified arrangements, it is the computed value of a topological invariant which is defined for arbitrary complex line arrangements~\cite{AFG:invariant}. %
All the proofs of the results of this section are based on the interpretation of this number as the $\I$-invariant and are given in Section~\ref{sec:proof}.

Let $\C=(\V,\S,\L,\pl)$ be a planar $(3,m)$-configuration. The lines $(V_1,V_2)$, $(V_2,V_3)$ and $(V_3,V_1)$ divide $\PR^2$ in 4 chambers, noted $\ch_1,\dots,\ch_4$. For any chamber $\ch_i$, define the value
\[
    \tau_i(\C) = \sum\limits_{S \in \S \cap \ch_i} \pl(S)\in\ZZ/m\ZZ.
\]

\begin{propo}\label{propo:tau_constant}
	The value $\tau_i(\C)$ is the same for all $i=1,\ldots,4$, i.e. it does not depend on the choice of the chamber $\ch_i$. Moreover, it takes values over $\set{[0],\left[\frac{m}{2}\right]}\subset\ZZ/m\ZZ$ if $m$ is even, and it is zero if $m$ is odd.
	
\end{propo}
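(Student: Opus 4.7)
The plan is to exploit, for each line $L\in\L$ passing through a vertex $V_k$, the defining relation $\sum_{S\in L\cap\S}\pl(S)=0$, after grouping these lines according to which chambers they visit.

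First I would set up the geometric picture. Since $\C$ is planar, the three vertex lines $\ell_k=(V_i,V_j)$ (with $\{i,j,k\}=\{1,2,3\}$) are in general position in $\PR^2$ and, by a standard Euler characteristic count, cut $\PR^2$ into exactly four triangular chambers, each having $V_1,V_2,V_3$ as its three corners: the \emph{interior} chamber $\ch_0$ (the affine triangle $V_1V_2V_3$) and three \emph{opposite} chambers $\ch_1,\ch_2,\ch_3$, where $\ch_k$ is glued to $\ch_0$ along the short arc of $\ell_k$ between $V_i$ and $V_j$. Locally at $V_k$, the two lines $\ell_i,\ell_j$ divide a neighborhood into 4 angular sectors, and each sector opens into a single chamber.

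The key geometric observation is that a line $L\in\L$ through $V_k$ distinct from $\ell_i,\ell_j$ leaves $V_k$ in two antipodal sectors, hence visits exactly 2 of the 4 chambers. Since crossing $V_k$ simultaneously flips the signs with respect to both $\ell_i$ and $\ell_j$ while preserving the sign with respect to $\ell_k$, the two antipodal sectors correspond to chambers differing exactly in those two signs. This yields a dichotomy: either $L$ visits $\{\ch_0,\ch_k\}$ (call $L$ of \emph{type A}) or $L$ visits $\{\ch_i,\ch_j\}$ (\emph{type B}). Next I use that each $S\in\S$ lies on a unique line of $\L$ through $V_k$, namely $(V_k,S)$, so the lines through $V_k$ partition $\S$. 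Summing the relation $\sum_{S\in L\cap\S}\pl(S)=0$ over all type-A lines through $V_k$ yields $\tau_0+\tau_k=0$, and summing over all type-B lines through $V_k$ yields $\tau_i+\tau_j=0$, where $\tau_a:=\sum_{S\in\S\cap\ch_a}\pl(S)$.

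Applying this argument for $k=1,2,3$ produces six linear equations in $\ZZ_m$, from which I read off $\tau_1=\tau_2=\tau_3=-\tau_0$ together with $2\tau_0=0$. Thus $\tau_0$ has order dividing $2$ in $\ZZ_m$: for odd $m$, this forces $\tau_0=0$ and all four chamber sums vanish; for even $m$, $\tau_0\in\{[0],[m/2]\}$ and since $-[m/2]=[m/2]$ in $\ZZ_m$, all four sums coincide with this common element of $\{[0],[m/2]\}$. The main obstacle is the chamber-visiting step: carefully verifying the type-A/type-B dichotomy requires attention to the non-orientable topology of $\PR^2$, specifically how antipodal identification interacts with the local sign-flipping rule at each vertex; once that is clear, the remainder is routine arithmetic in $\ZZ_m$.
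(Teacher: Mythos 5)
Your proof is correct and follows essentially the same route as the paper's: both sum the per-line relations $\sum_{S\in L\cap\S}\pl(S)=0$ over the lines of $\L$ through a fixed vertex, grouped according to which of the two cones at that vertex they lie in, thereby obtaining $\tau_a+\tau_b=0$ for every pair of chambers and hence the conclusion. Your type-A/type-B dichotomy at each vertex produces exactly the same six relations that the paper derives pair-by-pair (choosing, for each pair of chambers, the vertex whose cone contains both), just organized per vertex rather than per pair.
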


\begin{proof}
    For any two different chambers $\ch_i$ and $\ch_j$, take $V\in\V$ such that $\ch_i\cup\ch_j$ is contained in the cone with vertex $V$ and delimited by the two lines joining $V$ with the other vertices in $\V$. Denote by $\L_V$ the subset of lines in $\L$ passing through $V$, and take $\S_i=\S\cap\ch_i$ and $\S_j=\S\cap\ch_j$, respectively. Note that, by construction, any point in $\S_i$ or $\S_j$ is contained in an unique line $L\in\L_V$. By the second condition in Definition~\ref{def:configuration}, we have
    \[
	    0 =\sum_{L\in\L_V}\sum_{S\in L\cap\S}\pl(S)=\sum_{L\in\L_V}\sum_{S\in L\cap\S_i}\pl(S) + \sum_{L\in\L_V}\sum_{S\in L\cap\S_j}\pl(S)=\sum_{S\in \S_i}\pl(S) + \sum_{S\in \S_j}\pl(S)
    \]
    Thus, $\tau_i(\C)+\tau_j(\C)=0$ in $\ZZ/m\ZZ$ for any $i\neq j$. Considering the previous equations, we deduce that $\tau_i(\C)=\tau_j(\C)$. In particular, this implies that $\tau_i(\C)=-\tau_i(\C)$. Then, $\tau_i(\C)\in\set{[0],\left[\frac{m}{2}\right]}\subset\ZZ/m\ZZ$ if $m$ is even, and $\tau_i(\C)$ is zero if $m$ is odd.
\end{proof}

An alternative, less combinatorial proof of the previous proposition can be found in Section~\ref{sec:proof}.

\begin{de}
	The \emph{chamber weight} of $\C$ is
	\begin{equation*}
	\tau(\C) = \sum\limits_{S \in \S \cap \ch_i} \pl(S)\in\ZZ/m\ZZ.
	\end{equation*}
\end{de}

\begin{rmk}\label{rmk:parity}
	In the case of a $(3,2)$-configuration, $\tau(\C)$ is the parity of the number of points of $\S$ contained in any chamber $\ch_i$.
\end{rmk}

\begin{example}
	In Figure~\ref{fig:config_examples}--(A), we have one point $S_i$ in each chamber $\ch_i$. Since the configuration is uniform and $\pl(\S)\subset\ZZ/2\ZZ\setminus\{0\}$, then the chamber weight is
	\begin{equation*}
		\tau(\C)=\pl(S_i)=1.
	\end{equation*}
\end{example}

\begin{thm}\label{thm:invariant}
	Let $\C$ and $\C'$ be two planar $(3,m)$-configurations. Assume that there exists a homeomorphism of $\PC^2$ sending $\A^\C$ to $\A^{\C'}$ and respecting a fixed order on these arrangements. Then
	\begin{equation*}
		\tau(\C) = \tau(\C').
	\end{equation*}
\end{thm}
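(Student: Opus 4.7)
The plan is to bootstrap the theorem off the topological invariance of the linking $\I$-invariant of Artal--Florens and the first author, by realising the chamber weight $\tau(\C)$ as a computable function of $\I$ applied to the DPA. Three preliminary observations make this strategy natural: first, the triple $(\A^\C,\A^\V,\xi)$ extracted from the DPA of $\C$ already fits the input format of $\I$, with $\A^\V$ playing the role of the distinguished support (three lines) and $\xi$ the torsion character (trivial on $\A^\V$ and of order dividing $m$ on $\A^\S$); second, any ordered homeomorphism of $(\PC^2,\A^\C)$ automatically identifies the support $\A^\V$ and transports the character, since both are combinatorial data attached to the labelled arrangement; third, the $\I$-invariant is known to be preserved by such ordered homeomorphisms, so the question reduces to expressing $\tau(\C)$ in terms of $\I(\A^\C,\A^\V,\xi)$.

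The core step, which will appear as Theorem~\ref{thm:relation}, is this explicit formula. Its derivation rests on the fact that for a complexified real arrangement one can read the braid monodromy --- and hence the pieces of $\I$ --- directly from the real picture: projective duality exchanges the real configuration $\C$ in $\PR^2$ and the real arrangement $\A^\C$ in $\PRc^2$, and a surrounding point $S\in\ch_i$ corresponds to a real dual line $S^\bullet$ cutting a specific region of $\PRc^2\setminus \A^\V$. Summing the plumbing weights $\pl(S)$ over $\ch_i$ becomes precisely the character-weighted count of real intersections that defines the relevant component of $\I$ modulo $m$. Proposition~\ref{propo:tau_constant} is then the combinatorial shadow of the intrinsic symmetries of this linking count.

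Once the formula $\tau(\C)=f\bigl(\I(\A^\C,\A^\V,\xi)\bigr)$ is in hand, the theorem follows by concatenation: the hypothesised ordered homeomorphism from $\A^\C$ to $\A^{\C'}$ sends the DPA data of $\C$ to that of $\C'$, so their $\I$-invariants coincide, and the formula forces $\tau(\C)=\tau(\C')$.

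The main obstacle I foresee is the careful verification of this chamber-linking dictionary: one must keep track of orientations, base points of meridians, and the precise chamber labelling on both sides of the duality, and reconcile this with the $\{[0],[m/2]\}$ ambiguity appearing in Proposition~\ref{propo:tau_constant}. In particular, showing that the four chamber counts in $\PR^2$ collapse to a well-defined single value in $\ZZ_m$ when viewed through $\I$ seems to require a symmetry argument essentially dual to the one given after Proposition~\ref{propo:tau_constant}, now phrased in terms of linking numbers of meridians around the lines of $\A^\V$. Once this bookkeeping is settled, the remainder of the argument is purely formal.
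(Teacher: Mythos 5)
Your strategy is exactly the one the paper follows: identify the DPA of $\C$ as a triangular inner-cyclic arrangement, prove the dictionary $\I(\A^\C,\xi,\gamma)=\exp\left(\frac{-2\pi i}{m}\tau(\C)\right)$ by reading the real picture (the set $\D_1\cap\D_3$ of lines entering the computation of $\I$ dualizes precisely to $\S\cap\ch_i$), and then transport the invariance of $\I$ back to $\tau$. The reduction of the well-definedness of $\tau$ to the independence of $\I$ from the choice of cycle and line at infinity is also how the paper reproves Proposition~\ref{propo:tau_constant} topologically.

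There is, however, one genuine gap in your third preliminary observation: you assert that ``the $\I$-invariant is known to be preserved by such ordered homeomorphisms,'' but the invariance theorem of Artal--Florens--Guerville only applies to homeomorphisms respecting the order \emph{and the orientation} of $\A$, whereas Theorem~\ref{thm:invariant} assumes only an order-respecting homeomorphism. An orientation-reversing homeomorphism a priori sends $\I$ to $\I^{-1}$, so your concatenation argument only yields $\tau(\C)=\pm\tau(\C')$. The paper closes this gap by invoking the fact that the arrangements in question are complexified real, hence self-conjugate, so that $\I(\A^\C,\xi,\gamma)=\pm 1$ is a real number fixed by inversion; one then removes the orientation hypothesis via the corresponding statement for real arrangements (\cite[Theorem 4.19]{ACCM:real_ZP}, or \cite[Lemma 4.12]{Guerville:ZP}). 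You flag ``orientations'' as bookkeeping to be settled in the chamber-linking dictionary, but the issue is not in the dictionary: it is in the invariance statement itself, and it requires this extra reality argument rather than a more careful labelling of chambers and meridians.
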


The result above can be improved with the removal of the ordered condition by assuming that no automorphism of the combinatorics modifies the chamber weight. We could assume that there is no automorphism of the combinatorics other than the identity, but it is sufficient to assume a weaker hypothesis given in the following corollary.

\begin{cor}\label{cor:invariant}
	Let $\C$ be a stable planar uniform $(3,m)$-configuration. The chamber weight $\tau(\C)$ is an invariant of the topology of $\A^\C$.
\end{cor}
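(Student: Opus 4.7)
The plan is to deduce the corollary from Theorem~\ref{thm:invariant} by promoting an arbitrary homeomorphism to one respecting an appropriate ordering. Let $h:(\PC^2,\A^\C) \to (\PC^2,\A^{\C'})$ be a homeomorphism sending $\A^\C$ to $\A^{\C'}$, without any ordering assumption. It permutes the lines, inducing a bijection $\sigma:\A^\C \to \A^{\C'}$ that preserves the incidences and multiplicities of all singular points. Dualizing, I obtain a bijection $\sigma^*:\V\sqcup\S \to \V'\sqcup\S'$ respecting collinearity triplets, so $\C$ and $\C'$ share a common combinatorics $\K$.

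The decisive step is to show that $\sigma^*(\V)=\V'$. Fix a reference identification of the common combinatorics $\K$, that is, a bijection $\phi_0:\V\sqcup\S \to \V'\sqcup\S'$ that respects collinearities and sends $\V$ to $\V'$; then $\phi_0^{-1}\circ\sigma^*$ is an automorphism of the hypergraph $(\V\sqcup\S,\K)$. Stability of $\K$ forces this automorphism to preserve $\V$, whence $\sigma^*(\V)=\phi_0(\V)=\V'$. Consequently, $\sigma$ identifies vertex-lines of $\A^\C$ with vertex-lines of $\A^{\C'}$ and, by complementarity, surrounding-point-lines with surrounding-point-lines.

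It remains to observe that, by uniformity, all surrounding-points of $\C$ share a common plumbing value $\zeta$ (and similarly for $\C'$), so the ordering on $\A^\C$ and $\A^{\C'}$ induced by $\sigma$ is automatically compatible with the DPA structures on both sides: vertex-lines go to vertex-lines with character $1$, and surrounding-point-lines go to surrounding-point-lines all of which carry the common character $\exp(2\pi i\zeta/m)$. Theorem~\ref{thm:invariant} then applies to $(h,\sigma)$, yielding $\tau(\C)=\tau(\C')$.

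The principal obstacle is recovering the vertex/surrounding-point decomposition purely from the \emph{unordered} topological equivalence: stability supplies the combinatorial rigidity needed, while uniformity complementarily neutralizes the residual permutation freedom among surrounding-points. Indeed, in the uniform case the chamber weight simplifies to $\tau(\C)=\zeta\cdot\#(\S\cap\ch_i)$ and hence depends only on the number of marked points sitting in a chamber, not on which specific surrounding-points occupy it; this is precisely what allows Theorem~\ref{thm:invariant} to absorb the remaining ambiguity.
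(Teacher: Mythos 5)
Your argument is correct and follows essentially the same route as the paper: stability forces the combinatorial bijection induced by any homeomorphism to send vertices to vertices (hence to preserve the chamber decomposition), uniformity makes the induced ordering automatically compatible with the plumbing because $\tau(\C)=\zeta\cdot\#(\S\cap\ch_i)$ depends only on the count of surrounding-points in a chamber, and Theorem~\ref{thm:invariant} then concludes. The paper packages this as the invariance of $\tau(\C)$ under automorphisms of the combinatorics rather than as a promotion of the homeomorphism to an ordered one, but the content is identical.
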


\bigskip
\section{Topology of line arrangements: Zariski pairs}\label{sec:ZP}

The chamber weight of a configuration provides a new way to study a classical problem for line arrangements: how is the combinatorial data of an arrangement related with its topological properties? After recalling basic notions about combinatorics of arrangements and its relation with those of configurations, we exhibit an example of a Zariski pair with 13 lines using the chamber weight. We conclude by listing some additional properties of these arrangements.

\subsection{Combinatorics and topology of arrangements}\label{subsec:arrangement}\mbox{}

Analogously to the $(t,m)$-configurations case, we can encode all the combinatorial data of line arrangements by using incidence properties between lines.
\begin{de}
	The \emph{combinatorics} of a line arrangement $\A$ is the collection of all triplets of concurrent lines at the same point in $\A$.
\end{de}
In order to simplify the above notation, if $k\geq 4$ lines $L_1,\dots,L_k$ are concurrent at a point, we write the set $\set{L_1,\dots,L_k}$ instead of all the triplets contained in $\set{L_1,\dots,L_k}$. We say that two line arrangements $\A_1$ and $\A_2$ \emph{have the same combinatorics} if there exists a bijection between $\A_1$ and $\A_2$ respecting incidence relations.

The action of dualizing transforms collinearity relations into incidence relations and vice-versa. As a direct consequence of the construction of dual arrangements, we have the following result.
  
\begin{propo}\label{propo:equivalence}
	A $(t,m)$-configuration $\C$ and its dual arrangement $\A^\C$ have the same combinatorics, i.e. the map sending $P\in\V\sqcup\S$ to $P^\bullet$ maps a triple of collinear points to a triple of concurrent lines.
\end{propo}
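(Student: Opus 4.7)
The plan is to derive this from the standard point-line duality $\PK^2\leftrightarrow\PKc^2$, applied first over $\RR$ and then lifted to $\CC$.

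First, I would recall the duality relation stated just before the DPA definition: for any point $P\in\PK^2$ and any line $L\subset\PK^2$, one has $P\in L$ if and only if $L^*\in P^*$. An immediate consequence is that three points $P_1,P_2,P_3\in\PK^2$ are collinear, lying on a common line $L$, if and only if the three dual lines $P_1^*,P_2^*,P_3^*\subset\PKc^2$ all contain the common point $L^*$, i.e.\ are concurrent at $L^*$. This gives the equivalence ``collinearity of points in $\PK^2$'' $\Longleftrightarrow$ ``concurrency of their duals in $\PKc^2$'', valid over any field $\KK$, together with the fact that $P\mapsto P^*$ is a bijection (since $(P^*)^*=P$).

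Next, I would apply this equivalence with $\KK=\RR$ to the points $P_1,P_2,P_3\in\V\sqcup\S\subset\PR^2$: collinearity along a real line $L$ yields concurrency of $P_1^*,P_2^*,P_3^*$ at $L^*\in\PRc^2$, and after tensoring with $\CC$ the complex lines $P_i^\bullet=P_i^*\otimes\CC$ all contain the complexified point $L^*\otimes\CC\in\PC^2$. Conversely, if the three complex lines $P_i^\bullet$ are concurrent at some point $Q\in\PC^2$, then $Q=P_1^\bullet\cap P_2^\bullet$ is the complexification of $P_1^*\cap P_2^*\in\PRc^2$, hence is a real point; so the three real lines $P_i^*$ already meet at this real point in $\PRc^2$, and by the dual equivalence above $P_1,P_2,P_3$ are collinear in $\PR^2$.

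Putting these two paragraphs together, the map $P\mapsto P^\bullet$ is a bijection from $\V\sqcup\S$ onto $\A^\V\cup\A^\S=\A^\C$ that both sends and reflects collinear triples to concurrent triples, which is exactly the assertion. The only potentially subtle step is to exclude ``accidental'' complex concurrencies between lines that are defined over $\RR$, but this reduces to the elementary observation that two distinct real lines always intersect at a point with real coordinates; I do not expect any further obstacle.
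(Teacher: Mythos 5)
Your argument is correct and follows exactly the route the paper intends: the paper states this proposition as a direct consequence of the incidence-preserving duality $P\in L\Leftrightarrow L^*\in P^*$ recalled just before the DPA definition, and offers no further proof. Your write-up simply makes that explicit, and the one detail you add beyond the paper --- that a concurrency of the complexified lines forces a real concurrency because two distinct real lines meet in a real point --- is the right observation and closes the only possible gap.
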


\begin{rmk}\label{rmk:weak_combinatorics}
Classically, a \emph{weaker} notion of combinatorics for line arrangements is also considered (see, for example~\cite{artal:char_var}), defined by the number of lines and the number of singular points of each multiplicity (globally and on each line).
\end{rmk}

We are interested in the study of the topology/combinatorics interaction on line arrangements. We introduce a classical object in the study of the topology of line arrangements: a \emph{Zariski pair} is a couple of arrangements having isomorphic combinatorics and non-homeomorphic topological types. In order to study this interaction, and thus to produce Zariski pairs, we use $(t,m)$-configurations. In particular, we focus on the construction of planar $(3,2)$-configurations, in order to use either Theorem~\ref{thm:invariant} or Corollary~\ref{cor:invariant}. Recall that any $(3,2)$-configuration $\C$ admits uniquely a constant plumbing $\pl(S)=1\in\ZZ/2\ZZ\setminus\{0\}$, for any $S\in\S$, and in that case the chamber weight is simply the parity of points contained in a chamber.

\subsection{A Zariski pair with 13 lines}\label{subsec:ZP13}\mbox{}

We construct four $(3,2)$-configurations $\C_{1,1}, \C_{1,-1}, \C_{-1,1}$ and $\C_{-1,-1}$ with the same combinatorics and verifying that $\tau(C_{\alpha,\beta})\neq\tau(C_{\alpha',\beta'})$ if $\alpha\beta\neq\alpha'\beta'$. Using Theorem~\ref{thm:invariant} and Corollary~\ref{cor:invariant}, we conclude that the associated plumbed dual arrangements have different topological types.

For any $\alpha,\beta\in\{-1,1\}$, let $\C_{\alpha,\beta}=(\V,\S_{\alpha,\beta},\L_{\alpha,\beta},\pl)$ be four uniform (3,2)-configurations defined by the following data
\[
	\V=\{V_1,V_2,V_3\},\quad \S_{\alpha,\beta}=\S\sqcup \S_{\alpha}\sqcup\S_{\beta},
\]
\[
	 \S=\{S_1,\ldots,S_4\},\quad \S_{\alpha}=\{S_5^\alpha,S_6^\alpha,S_7^\alpha\},\quad \S_{\beta}=\{S_8^\beta,S_9^\beta,S_{10}^\beta\},
\]
where
\[
	V_1=(0:1:0),\quad V_2=(1:0:0),\quad V_3=(0:0:1),
\]
\[
	S_1=(1:1:1),\quad S_2=(4:4:1),\quad S_3=(-1:8:2),\quad S_4=(8:-1:2),
\]
\[
	S_5^\alpha=(-1:8:4\alpha),\quad S_6^\alpha=(-1:4\alpha :2),\quad S_7^\alpha=(-\alpha:4:4),
\]
\[
	S_8^\beta=(8:-1:4\beta),\quad S_9^\beta=(4\beta:-1:2),\quad S_{10}^\beta=(4:-\beta:4).
\]

Note that any line in the above four $(3,2)$-configurations contains exactly two surrounding-points, which is compatible with the constant $2$-plumbings
\[
	\pl(S)=1,\quad \forall S\in\S_{\alpha,\beta},
\]
for any $\alpha,\beta\in\{-1,1\}$. These four $(3,2)$-configurations are plotted\footnote{In order to have clearer pictures, they are not plotted to scale but up to deformation respecting the combinatorics.} in Figure~\ref{fig:dual_ZP}, and detailed in Appendix~\ref{sec:appendix_ZP13}.

\begin{figure}[h]
	\begin{tikzpicture}

	\def\colorap{blue} 
	\def\coloram{teal}  
	\def\colorbp{red} 
	\def\colorbm{orange} 

	
	\begin{scope}[shift={(-4,4.25)}]
	
			\node at (1,-2.5) {(A) The $(3,2)$-configuration $\C_{1,1}$.};

			\draw[dashed, color=gray!50] (0,-2) -- (0,5) ;
			\draw[dashed, color=gray!50] (-2,0) -- (5,0) ;
			\draw[dashed, color=gray!50] (-2,4.5) -- (2.5,4.5) to[out=0,in=90] (4.5,2.5) -- (4.5,-2);

			\draw (-2,-1) -- (3,-1) to[out=0,in=-120] (4.5,0) -- (4.75,0.5);
			\draw (-2,2) -- (3,2) to[out=0,in=100] (4.5,0) -- (4.6,-0.5);
			\draw (-2,1) -- (3,1) to[out=0,in=120] (4.5,0) -- (4.8,-0.5);
			\draw (-1,-2) -- (-1,3) to[out=90,in=-150] (0,4.5) -- (0.5,4.75);
			\draw (1,-2) -- (1,3) to[out=90,in=-30] (0,4.5) -- (-0.5,4.75);
			\draw (2,-2) -- (2,3) to[out=90,in=-10] (0,4.5) -- (-0.5,4.55);
			\draw (-2,-2) -- (3,3);
			\draw (-2,1) -- (3,-1.5);
			\draw (-1.5,3) -- (1,-2);

			\draw[color=\colorap] (-2,1.41) -- (3,1.41) to[out=0,in=110] (4.5,0) -- (4.7,-0.5);
			\draw[color=\colorap] (-0.71,-2) -- (-0.71,3) to[out=90,in=-140] (0,4.5) -- (0.5,4.8);
			\draw[color=\colorap] (-2,2.82) -- (1.41,-2);


			\draw[color=\colorbp] (-2,-0.71) -- (3,-0.71) to[out=0,in=-130] (4.5,0) -- (4.85,0.5);
			\draw[color=\colorbp] (1.41,-2) -- (1.41,3) to[out=90,in=-20] (0,4.5) -- (-0.5,4.65);
			\draw[color=\colorbp] (-2,1.41) -- (3,-2.12);



			 \node at (0,0) {$\bullet$};
			 \node[right] at (0.15,0) {$V_3$};
			 \node at (0,4.5) {$\bullet$};
			 \node[above] at (0,4.5) {$V_1$};
			 \node at (4.5,0) {$\bullet$};
			 \node[right] at (4.5,0) {$V_2$};
		 
			 \node at (1,1) {$\bullet$}; 
			 \node at (2,2) {$\bullet$}; 
			 \node at (-1,2) {$\bullet$}; 
			 \node at (2,-1) {$\bullet$}; 
		 
			 \node[text=\colorap] at (-0.71,1.41) {$\bullet$}; 
			 \node[text=\colorap] at (-1,1.41) {$\bullet$}; 
			 \node[text=\colorap] at (-0.71,1) {$\bullet$};  
		 
			 \node[text=\colorbp] at (1.41,-0.71) {$\bullet$}; 
			 \node[text=\colorbp] at (1.41,-1) {$\bullet$}; 
			 \node[text=\colorbp] at (1,-0.71) {$\bullet$}; 
		\end{scope}

	
	\begin{scope}[shift={(4,4.25)}]
	
			\node at (1,-2.5) {(B) The $(3,2)$-configuration $\C_{1,-1}$.};

			\draw[dashed, color=gray!50] (0,-2) -- (0,5) ;
			\draw[dashed, color=gray!50] (-2,0) -- (5,0) ;
			\draw[dashed, color=gray!50] (-2,4.5) -- (2.5,4.5) to[out=0,in=90] (4.5,2.5) -- (4.5,-2);

			\draw (-2,-1) -- (3,-1) to[out=0,in=-120] (4.5,0) -- (4.75,0.5);
			\draw (-2,2) -- (3,2) to[out=0,in=100] (4.5,0) -- (4.6,-0.5);
			\draw (-2,1) -- (3,1) to[out=0,in=120] (4.5,0) -- (4.8,-0.5);
			\draw (-1,-2) -- (-1,3) to[out=90,in=-150] (0,4.5) -- (0.5,4.75);
			\draw (1,-2) -- (1,3) to[out=90,in=-30] (0,4.5) -- (-0.5,4.75);
			\draw (2,-2) -- (2,3) to[out=90,in=-10] (0,4.5) -- (-0.5,4.55);
			\draw (-2,-2) -- (3,3);
			\draw (-2,1) -- (3,-1.5);
			\draw (-1.5,3) -- (1,-2);

			\draw[color=\colorap] (-2,1.41) -- (3,1.41) to[out=0,in=110] (4.5,0) -- (4.7,-0.5);
			\draw[color=\colorap] (-0.71,-2) -- (-0.71,3) to[out=90,in=-140] (0,4.5) -- (0.5,4.8);
			\draw[color=\colorap] (-2,2.82) -- (1.41,-2);



			\draw[color=\colorbm] (-2,0.71) -- (3,0.71) to[out=0,in=130] (4.5,0) -- (4.9,-0.5);
			\draw[color=\colorbm] (-1.41,-2) -- (-1.41,3) to[out=90,in=-170] (0,4.5) -- (0.5,4.55);
			\draw[color=\colorbm] (-2,-1.41) -- (3,2.12);


			 \node at (0,0) {$\bullet$};
			 \node[right] at (0.15,0) {$V_3$};
			 \node at (0,4.5) {$\bullet$};
			 \node[above] at (0,4.5) {$V_1$};
			 \node at (4.5,0) {$\bullet$};
			 \node[right] at (4.5,0) {$V_2$};
		 
			 \node at (1,1) {$\bullet$}; 
			 \node at (2,2) {$\bullet$}; 
			 \node at (-1,2) {$\bullet$}; 
			 \node at (2,-1) {$\bullet$}; 
		 
			 \node[text=\colorap] at (-0.71,1.41) {$\bullet$}; 
			 \node[text=\colorap] at (-1,1.41) {$\bullet$}; 
			 \node[text=\colorap] at (-0.71,1) {$\bullet$};  
		 
			 \node[text=\colorbm] at (-1.41,0.71) {$\bullet$}; 
			 \node[text=\colorbm] at (-1.41,-1) {$\bullet$}; 
			 \node[text=\colorbm] at (1,0.71) {$\bullet$}; 
		\end{scope}

	
	\begin{scope}[shift={(-4,-4.25)}]
	
			\node at (1,-2.5) {(C) The $(3,2)$-configuration $\C_{-1,1}$.};

			\draw[dashed, color=gray!50] (0,-2) -- (0,5) ;
			\draw[dashed, color=gray!50] (-2,0) -- (5,0) ;
			\draw[dashed, color=gray!50] (-2,4.5) -- (2.5,4.5) to[out=0,in=90] (4.5,2.5) -- (4.5,-2);

			\draw (-2,-1) -- (3,-1) to[out=0,in=-120] (4.5,0) -- (4.75,0.5);
			\draw (-2,2) -- (3,2) to[out=0,in=100] (4.5,0) -- (4.6,-0.5);
			\draw (-2,1) -- (3,1) to[out=0,in=120] (4.5,0) -- (4.8,-0.5);
			\draw (-1,-2) -- (-1,3) to[out=90,in=-150] (0,4.5) -- (0.5,4.75);
			\draw (1,-2) -- (1,3) to[out=90,in=-30] (0,4.5) -- (-0.5,4.75);
			\draw (2,-2) -- (2,3) to[out=90,in=-10] (0,4.5) -- (-0.5,4.55);
			\draw (-2,-2) -- (3,3);
			\draw (-2,1) -- (3,-1.5);
			\draw (-1.5,3) -- (1,-2);


			\draw[color=\coloram] (-2,-1.41) -- (3,-1.41) to[out=0,in=-100] (4.5,0) -- (4.55,0.5);
			\draw[color=\coloram] (0.71,-2) -- (0.71,3) to[out=90,in=-40] (0,4.5) -- (-0.5,4.85);
			\draw[color=\coloram] (-1.41,-2) -- (2.12,3);

			\draw[color=\colorbp] (-2,-0.71) -- (3,-0.71) to[out=0,in=-130] (4.5,0) -- (4.85,0.5);
			\draw[color=\colorbp] (1.41,-2) -- (1.41,3) to[out=90,in=-20] (0,4.5) -- (-0.5,4.65);
			\draw[color=\colorbp] (-2,1.41) -- (3,-2.12);



			 \node at (0,0) {$\bullet$};
			 \node[right] at (0.15,0) {$V_3$};
			 \node at (0,4.5) {$\bullet$};
			 \node[above] at (0,4.5) {$V_1$};
			 \node at (4.5,0) {$\bullet$};
			 \node[right] at (4.5,0) {$V_2$};
		 
			 \node at (1,1) {$\bullet$}; 
			 \node at (2,2) {$\bullet$}; 
			 \node at (-1,2) {$\bullet$}; 
			 \node at (2,-1) {$\bullet$}; 
		 
			 \node[text=\coloram] at (0.71,-1.41) {$\bullet$}; 
			 \node[text=\coloram] at (-1,-1.41) {$\bullet$}; 
			 \node[text=\coloram] at (0.71,1) {$\bullet$}; 
		 
			 \node[text=\colorbp] at (1.41,-0.71) {$\bullet$}; 
			 \node[text=\colorbp] at (1.41,-1) {$\bullet$}; 
			 \node[text=\colorbp] at (1,-0.71) {$\bullet$}; 
		\end{scope}

	
	\begin{scope}[shift={(4,-4.25)}]
	
			\node at (1,-2.5) {(D) The $(3,2)$-configuration $\C_{-1,-1}$.};

			\draw[dashed, color=gray!50] (0,-2) -- (0,5) ;
			\draw[dashed, color=gray!50] (-2,0) -- (5,0) ;
			\draw[dashed, color=gray!50] (-2,4.5) -- (2.5,4.5) to[out=0,in=90] (4.5,2.5) -- (4.5,-2);

			\draw (-2,-1) -- (3,-1) to[out=0,in=-120] (4.5,0) -- (4.75,0.5);
			\draw (-2,2) -- (3,2) to[out=0,in=100] (4.5,0) -- (4.6,-0.5);
			\draw (-2,1) -- (3,1) to[out=0,in=120] (4.5,0) -- (4.8,-0.5);
			\draw (-1,-2) -- (-1,3) to[out=90,in=-150] (0,4.5) -- (0.5,4.75);
			\draw (1,-2) -- (1,3) to[out=90,in=-30] (0,4.5) -- (-0.5,4.75);
			\draw (2,-2) -- (2,3) to[out=90,in=-10] (0,4.5) -- (-0.5,4.55);
			\draw (-2,-2) -- (3,3);
			\draw (-2,1) -- (3,-1.5);
			\draw (-1.5,3) -- (1,-2);


			\draw[color=\coloram] (-2,-1.41) -- (3,-1.41) to[out=0,in=-100] (4.5,0) -- (4.55,0.5);
			\draw[color=\coloram] (0.71,-2) -- (0.71,3) to[out=90,in=-40] (0,4.5) -- (-0.5,4.85);
			\draw[color=\coloram] (-1.41,-2) -- (2.12,3);


			\draw[color=\colorbm] (-2,0.71) -- (3,0.71) to[out=0,in=130] (4.5,0) -- (4.9,-0.5);
			\draw[color=\colorbm] (-1.41,-2) -- (-1.41,3) to[out=90,in=-170] (0,4.5) -- (0.5,4.55);
			\draw[color=\colorbm] (-2,-1.41) -- (3,2.12);


			 \node at (0,0) {$\bullet$};
			 \node[right] at (0.15,0) {$V_3$};
			 \node at (0,4.5) {$\bullet$};
			 \node[above] at (0,4.5) {$V_1$};
			 \node at (4.5,0) {$\bullet$};
			 \node[right] at (4.5,0) {$V_2$};
		 
			 \node at (1,1) {$\bullet$}; 
			 \node at (2,2) {$\bullet$}; 
			 \node at (-1,2) {$\bullet$}; 
			 \node at (2,-1) {$\bullet$}; 
		 
			 \node[text=\coloram] at (0.71,-1.41) {$\bullet$}; 
			 \node[text=\coloram] at (-1,-1.41) {$\bullet$}; 
			 \node[text=\coloram] at (0.71,1) {$\bullet$}; 
		 
			 \node[text=\colorbm] at (-1.41,0.71) {$\bullet$}; 
			 \node[text=\colorbm] at (-1.41,-1) {$\bullet$}; 
			 \node[text=\colorbm] at (1,0.71) {$\bullet$}; 
		\end{scope}
		
		\end{tikzpicture}

	\caption{The $(3,2)$-configurations $\C_{\alpha,\beta}$. In black, the common points $\V$, $\S$ and their lines. In color, the points and lines corresponding to each configuration for {\color{blue}{$\alpha=1$}}, {\color{teal}{$\alpha=-1$}}, {\color{red}{$\beta=1$}}, {\color{orange}{$\beta=-1$}}.}\label{fig:dual_ZP}
\end{figure}

\begin{propo}\label{propo:ZP_comb}
	For any $\alpha,\beta\in\set{-1,1}$, the configuration $\C_{\alpha,\beta}$ is stable and has the following combinatorics
	\begin{equation*}
		\begin{array}{c}
			\left\{\ 
				\big\{V_1,S_1,S_{10}^\beta\big\},\ \big\{V_1,S_2,S_4\big\},\ \big\{V_1,S_3,S_6^\alpha\big\},\ \big\{V_1,S_5^\alpha,S_7^\alpha\big\},\ \big\{V_1,S_8^\beta,S_9^\beta\big\},  
				\right. \\[0.5em]
				\quad \big\{V_2,S_1,S_7^\alpha\big\},\ \big\{V_2,S_2,S_3\big\},\ \big\{V_2,S_4,S_9^\beta\big\},\ \big\{V_2,S_5^\alpha,S_6^\alpha\big\},\ \big\{V_2,S_8^\beta,S_{10}^\beta\big\},  \\[0.5em]
				\left. 
				\qquad \big\{V_3,S_1,S_2\big\},\ \big\{V_3,S_3,S_5^\alpha\big\},\ \big\{V_3,S_4,S_8^\beta\big\},\ \big\{V_3,S_6^\alpha,S_7^\alpha\big\},\ \big\{V_3,S_9^\beta,S_{10}^\beta\big\}\ 
			\right\}.
		\ \end{array}
	\end{equation*}
\end{propo}

\begin{proof}
	The combinatorics can be computed using the coordinates of the vertices and surrounding-points. Let us prove that $\V$ is stabilized by any automorphism of the combinatorics. The points of $\V$ are the only ones contained in five different 3-point lines (since any surrounding point is contained in exactly three 3-point lines). By definition, the automorphisms of the combinatorics respect the collinearity, thus they fix globally the vertices, and we obtain that the configuration $\C_{\alpha,\beta}$ is stable for any $\alpha,\beta\in\set{-1,1}$.
\end{proof}

\begin{rmk}
	In fact, we can prove that the automorphism group is of order $4$ and generated by the permutations
	\begin{equation*}
	\begin{array}{l}
		\sigma_1 :  \left(V_1,V_2,V_3,S_1,\dots,S_{10}^\beta\right) \mapsto \left(V_2,V_1,V_3, S_2, S_1, S_{10}^\beta,S_7^\alpha,S_9^\beta,S_8^\beta,S_4,S_6^\alpha,S_5^\alpha,S_3\right), \\[1em]
		\sigma_2 :  \left(V_1,V_2,V_3,S_1,\dots,S_{10}^\beta\right) \mapsto \left(V_2,V_1,V_3, S_1, S_2, S_4, S_3, S_8^\beta, S_9^\beta, S_{10}^\beta, S_5^\alpha, S_6^\alpha, S_7^\alpha\right). \\
	\end{array}
\end{equation*}
They can be seen as two geometric symmetries in Figure~\ref{fig:dual_ZP}: the permutation $\sigma_1$ is induced by the inversion of $S_1$ and $S_2$, and $\sigma_2$ corresponds to the symmetry with axis $\ell:\ x=y$.
\end{rmk}

For any $\alpha,\beta\in\{-1,1\}$, we denote by $\A^{\alpha,\beta}$ the dual arrangement of the $(3,2)$-configuration $\C_{\alpha,\beta}$. Recall that, from Proposition~\ref{propo:ZP_comb}, $\A^{1,1}$, $\A^{1,-1}$, $\A^{-1,1}$, $\A^{-1,-1}$ have the same combinatorics.

\begin{thm}\label{thm:homeo_ZP13}
	Let $\alpha,\alpha',\beta,\beta'\in\{-1,1\}$ be such that $\alpha\beta\neq\alpha'\beta'$. There is no homeomorphism between $\left(\PC^2, \A^{\alpha,\beta}\right)$ and $\left(\PC^2, \A^{\alpha',\beta'}\right)$.
\end{thm}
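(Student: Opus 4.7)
The plan is to apply Corollary~\ref{cor:invariant}, which makes $\tau(\C)$ a topological invariant of $\A^\C$ for stable planar uniform $(3,m)$-configurations. Each $\C_{\alpha,\beta}$ is uniform by construction, planar since $V_1,V_2,V_3$ are in general position, and stable by Proposition~\ref{propo:ZP_comb}. The problem therefore reduces to showing that
\[
    \tau(\C_{\alpha,\beta})\;\ne\;\tau(\C_{\alpha',\beta'}) \quad \text{whenever } \alpha\beta\ne\alpha'\beta'.
\]

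Since $V_1=(0{:}1{:}0)$, $V_2=(1{:}0{:}0)$, $V_3=(0{:}0{:}1)$ are the standard coordinate points, the three lines $(V_iV_j)$ are the line at infinity together with the two coordinate axes in the affine chart $z\ne 0$. Tracking sign patterns modulo the relation $(x{:}y{:}z)\sim(-x{:}-y{:}-z)$, the four chambers of $\PR^2$ cut out by them are in bijection with the four open affine quadrants. I would fix $\ch_1$ as the open first quadrant, so that by Remark~\ref{rmk:parity} the computation of $\tau(\C_{\alpha,\beta})$ reduces to counting the surrounding-points inside $\ch_1$ modulo~$2$.

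Dehomogenising the given coordinates, the $(\alpha,\beta)$-independent points $S_1,S_2$ always sit in $\ch_1$ while $S_3,S_4$ do not. Among $\S_\alpha$ only $S_7^\alpha=(-\alpha{:}4{:}4)$ reaches $\ch_1$, and precisely when $\alpha=-1$; symmetrically only $S_{10}^\beta=(4{:}-\beta{:}4)$ in $\S_\beta$ reaches $\ch_1$, precisely when $\beta=-1$. Hence $\tau(\C_{\alpha,\beta})$ equals $0$ if $\alpha=\beta$ and $1$ if $\alpha\ne\beta$, so $\tau$ detects the product $\alpha\beta\in\{\pm 1\}$. Since $\alpha\beta\ne\alpha'\beta'$ forces different weights, Corollary~\ref{cor:invariant} rules out a homeomorphism of pairs between $(\PC^2,\A^{\alpha,\beta})$ and $(\PC^2,\A^{\alpha',\beta'})$.

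The computation itself is entirely routine; the only point that deserves real attention is the chamber-quadrant bijection, which must be verified to avoid accidentally identifying opposite quadrants across the line at infinity. Once this is set up, the topological content is completely packaged inside Corollary~\ref{cor:invariant} and no further analysis of the arrangements themselves is required.
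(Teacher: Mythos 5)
Your proposal is correct and follows essentially the same route as the paper: both reduce the statement to Corollary~\ref{cor:invariant} via stability (Proposition~\ref{propo:ZP_comb}), uniformity and planarity, and then compute the parity of the surrounding-points lying in the first quadrant, obtaining $\tau=0$ exactly when $\alpha\beta=1$. The only cosmetic difference is that you verify the quadrant membership directly from the homogeneous coordinates (correctly, including the chamber--quadrant identification) whereas the paper reads the count off Figure~\ref{fig:dual_ZP}.
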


\begin{proof}
	We proved in Proposition~\ref{propo:ZP_comb} that the $(3,2)$-configuration $\C_{\alpha,\beta}$ is stable for any $\alpha,\beta\in\{-1,1\}$; in addition, the associated $2$-plumbing of any of these configurations is constant on $\S$. By Corollary~\ref{cor:invariant}, it is sufficient to compute the chamber weight of each configuration and check that they differ if and only if $\alpha\beta\neq\alpha'\beta'$.
	
	We may assume, up to projective transformation, that $\ell=(V_1,V_2)$ is the line at infinity in $\PR^2$, and that $(V_1,V_3)$ and $(V_2,V_3)$ represent the canonical axis of $\RR^2=\PR^2\setminus\ell$. By Remark~\ref{rmk:parity}, computing the chamber weight of each configuration amounts to determine the parity on the number of surrounding-points in, for example, the first quadrant $\ch_1$ of $\RR^2$. Using Figure~\ref{fig:dual_ZP}, we conclude the proof with the following computations: there is an even number of surrounding-points in $\ch_1$ for the configurations $\C_{1,1}$ and $\C_{-1,-1}$, and an odd number of surrounding-points for $\C_{1,-1}$ and $\C_{-1,1}$ in the same quadrant. Then, we have the corresponding chamber weights $\tau(\C_{1,1})=\tau(\C_{-1,-1})=0$ and $\tau(\C_{1,-1})=\tau(\C_{-1,1})=1$, and the result follows.
\end{proof}

\begin{cor}
	For any $\alpha,\beta,\alpha',\beta'\in\set{1,-1}$ such that $\alpha\beta\neq\alpha'\beta'$, the arrangements $\A^{\alpha,\beta}$ and $\A^{\alpha',\beta'}$ form a Zariski pair.
\end{cor}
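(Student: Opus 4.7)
The plan is essentially immediate, since the corollary simply repackages the two previous results into the definition of a Zariski pair. First I would invoke Proposition~\ref{propo:ZP_comb}, which establishes that all four $(3,2)$-configurations $\C_{\alpha,\beta}$ with $\alpha,\beta\in\{-1,1\}$ share an identical combinatorics (listed explicitly there). By Proposition~\ref{propo:equivalence}, the combinatorics of any $(t,m)$-configuration coincides with the combinatorics of its dual line arrangement. Hence the four dual arrangements $\A^{1,1}, \A^{1,-1}, \A^{-1,1}, \A^{-1,-1}$ are all combinatorially equivalent as complex line arrangements in $\PC^2$, which disposes of the combinatorial half of the Zariski-pair definition recalled in Section~\ref{subsec:arrangement}.

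Next, I would appeal to Theorem~\ref{thm:homeo_ZP13}: whenever $\alpha\beta \neq \alpha'\beta'$, there exists no homeomorphism of pairs $(\PC^2,\A^{\alpha,\beta}) \simeq (\PC^2,\A^{\alpha',\beta'})$, so the two arrangements have distinct topological types. Combining this with the previous paragraph gives exactly the two conditions required to conclude that $(\A^{\alpha,\beta},\A^{\alpha',\beta'})$ is a Zariski pair.

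No real obstacle arises; all the content has been carried by the construction of the $\C_{\alpha,\beta}$, the verification of their shared combinatorics in Proposition~\ref{propo:ZP_comb}, and the chamber-weight computation underlying Theorem~\ref{thm:homeo_ZP13}. In the write-up I would briefly point out that this in fact yields multiple Zariski pairs at once, since any two indices with $\alpha\beta \neq \alpha'\beta'$ work; and I would emphasize that because the coordinates defining the vertices $V_i$ and surrounding-points $S_j, S_k^{\pm 1}$ are all rational, each line of $\A^{\alpha,\beta}$ admits a rational equation, so the Zariski pairs produced are defined over $\QQ$, in accordance with the claim announced in the introduction.
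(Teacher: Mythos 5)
Your proposal is correct and follows exactly the route the paper intends: the corollary is an immediate consequence of Proposition~\ref{propo:ZP_comb} together with Proposition~\ref{propo:equivalence} (same combinatorics of the four dual arrangements) and Theorem~\ref{thm:homeo_ZP13} (distinct topological types when $\alpha\beta\neq\alpha'\beta'$). The paper leaves this combination implicit, so your write-up, including the remark on rationality of the equations, adds nothing beyond what the authors take for granted.
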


It is worth pointing out that the above result provides the first example of Zariski pair of line arrangements defined by rational coefficients. 

\subsection{Some properties of this Zariski pair}

\subsubsection{A Zariski-like geometric property}

As in Zariski's original example, the above Zariski pair can be distinguished by geometric properties. Let us denote by $L_{i,j}$ the line joining $S_i$ and $S_j$ in $C_{\alpha,\beta}$.

\begin{propo}\label{propo:pts_align}
	If the lines $L_{1,2}$, $L_{5,6}$ and $L_{8,9}$ are concurrent (or equivalently, the points $L_{1,2}^\bullet$, $L_{5,6}^\bullet$ and $L_{8,9}^\bullet$ are collinear) then $\tau(\C_{\alpha,\beta})=0$, otherwise $\tau(\C_{\alpha,\beta})=1$.
\end{propo}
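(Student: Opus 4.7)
The plan is to verify the proposition by a direct computation using the explicit coordinates of the surrounding-points given in Section~\ref{subsec:ZP13}. First I would compute the equations of the three lines in homogeneous coordinates $(x:y:z)$ of $\PR^2$. Since $S_1=(1:1:1)$ and $S_2=(4:4:1)$ both satisfy $x=y$, one immediately obtains $L_{1,2}:\ x-y=0$. A short calculation with $S_5^\alpha=(-1:8:4\alpha)$ and $S_6^\alpha=(-1:4\alpha:2)$ gives $L_{5,6}:\ y-2\alpha z=0$ (using $\alpha^2=1$), and similarly from $S_8^\beta$, $S_9^\beta$ one finds $L_{8,9}:\ x-2\beta z=0$.

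Next, the intersection of $L_{5,6}$ and $L_{8,9}$ is the point $(2\beta:2\alpha:1)$, which lies on $L_{1,2}$ if and only if $2\alpha=2\beta$, i.e. $\alpha=\beta$; since $\alpha,\beta\in\{-1,1\}$, this is equivalent to $\alpha\beta=1$. Thus the three lines $L_{1,2}$, $L_{5,6}$, $L_{8,9}$ are concurrent precisely when $\alpha\beta=1$. The equivalence with the collinearity of $L_{1,2}^\bullet,\,L_{5,6}^\bullet,\,L_{8,9}^\bullet$ is then immediate from the standard duality between $\PR^2$ and $\PRc^2$, which exchanges concurrency of lines with collinearity of the corresponding dual points.

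Finally, I would invoke the chamber weight computation carried out in the proof of Theorem~\ref{thm:homeo_ZP13}: there it was established that $\tau(\C_{\alpha,\beta})=0$ for $(\alpha,\beta)\in\{(1,1),(-1,-1)\}$ and $\tau(\C_{\alpha,\beta})=1$ for $(\alpha,\beta)\in\{(1,-1),(-1,1)\}$; in other words $\tau(\C_{\alpha,\beta})=0$ if and only if $\alpha\beta=1$. Combining this with the previous paragraph yields the claimed equivalence. No real obstacle is expected, as the proof reduces to a straightforward coordinate check followed by citing the already-established values of the chamber weight; the only minor care needed is to keep track of the sign $\alpha$ (resp. $\beta$) in the coordinates of $S_5^\alpha, S_6^\alpha$ (resp. $S_8^\beta, S_9^\beta$), so that the concurrency condition indeed comes out as $\alpha=\beta$ rather than some spurious relation.
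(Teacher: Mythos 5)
Your proof is correct: the line equations $L_{1,2}\colon x-y=0$, $L_{5,6}\colon y-2\alpha z=0$, $L_{8,9}\colon x-2\beta z=0$ check out, concurrency is indeed equivalent to $\alpha=\beta$, and combining this with the chamber weights computed in the proof of Theorem~\ref{thm:homeo_ZP13} gives exactly the claim. The paper states this proposition without proof, but your direct coordinate verification is precisely the method the authors use for the analogous alignment criterion in Section~\ref{sec:MS} (the collinearity of $P_{1,11,12}$, $P_{2,8,9}$, $P_{3,4,5}$), so this is essentially the intended argument.
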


\begin{rmk}
	This concurrency of lines is not the only one present in these pairs (see for example $L_{1,2}$, $L_{5,7}$ and $L_{8,10}$) but it is the only one which seems to be invariant up to lattice preserving deformation.	
	
	The above condition seems to characterize the connected components of the realization space of the combinatorics of $\C_{\alpha,\beta}$ where the chamber weight $\tau$ is zero. This is the case for one of the degenerations of this pair presented in Section~\ref{sec:deg}, as it is proved in Proposition~\ref{propo:geom_char}.
\end{rmk}

\subsubsection{Arrangements which are $C_{\leq 3}$ but not of simple type}

In~\cite{NY:connectivity}, Nazir and Yoshinaga defined the family of \emph{$C_{\leq 3}$ arrangements}, composed of those which possess at most 3 lines $\{L,L',L''\}$ containing any singular point of multiplicity greater than 3. In addition, a $C_{\leq 3}$ arrangement is of \emph{simple type} if either $L,L',L''$ are concurrent or if exactly one of these lines contains an unique point of multiplicity greater than 3.

They prove in~\cite[Thm.~3.11]{NY:connectivity} that the moduli spaces of $C_{\leq 3}$ of simple type are irreducible. As a consequence, such arrangements cannot form a Zariski pair.\\

Note that the arrangements $\A^{\alpha,\beta}$ are $C_{\leq 3}$ arrangements. Indeed, any singular point of multiplicity greater than 3 is contained on the support of the DPA of $\C_{\alpha,\beta}$. However, they are not of simple type since any line of the support contains exactly 5 singular points of multiplicity greater than 3. This fact illustrates the importance of the hypothesis ``of simple type'' in the result of Nazir and Yoshinaga.

\begin{rmk}
	In fact, the second condition in Definition~\ref{def:configuration} implies that the dual arrangement of any planar $(3,m)$-configuration is neither a $C_{\leq 3}$ arrangement nor a $C_{\leq 3}$ arrangement of non-simple type.
\end{rmk}

\subsubsection{Milnor fiber}

For a line arrangement $\A$, consider the \emph{Milnor fiber} of $\A$ as the affine hypersurface in $\CC^3$ defined by $\F_\A=\Q_\A^{-1}(1)$, where $\Q_\A$ is the defining polynomial of $\A$, i.e. a square-free product of all linear forms $\alpha_L$ defining each line $L\in\A$. Define the complex number $\lambda=\exp(2\pi i/n)$, where $n$ is the number of lines of $\A$ and consider the cyclic group $\mu_n=\langle\lambda\rangle$. Following \cite{CS:milnor}, an element of $\mu_n$ defines a \emph{monodromy automorphism} $h:\F_\A\to\F_\A$, which induces an algebraic monodromy $\HH^1(\F_\A;\CC)\to \HH^1(\F_\A;\CC)$. Consider the eigenspace decomposition of $\HH^1(\F_\A;\CC)$ (see~\cite[Theorem 1.6]{CS:milnor})
\begin{equation}\label{eq:Milnor_decomp}
		\HH^1(\F_\A;\CC)\simeq\bigoplus_{\xi\in\mu_{n}} \HH^1(\F_\A;\CC)_{\xi}.
\end{equation}
In fact, we can identify $\HH^1(\F_\A;\CC)_1\simeq \HH^1(\PC^2\setminus \A;\CC)$, which is determined by the combinatorics of $\A$. In the case of complexified real arrangements, Yoshinaga~\cite{Yoshinaga:milnor_fibers} gives a geometric algorithm to compute the above eigenspaces.

For any $\alpha,\beta\in\{-1,1\}$, consider $\F^{\alpha,\beta}$ being the respective Milnor fiber of $\A^{\alpha,\beta}$. Remark that the arrangements $\A^{\alpha,\beta}$ have exactly 13 lines and only double and triple points. As a direct consequence, we obtain the following proposition.

\begin{propo}\label{propo:Milnor}
	For any $\alpha,\beta\in\{-1,1\}$, we have that $\HH^1(\F^{\alpha,\beta};\CC)\simeq \HH^1(\PC^2\setminus \A^{\alpha,\beta};\CC)$. Furthermore, the characteristic polynomial of the Milnor fiber monodromy $h$ is combinatorially determined, as well as the Betti numbers of the Milnor fiber.
\end{propo}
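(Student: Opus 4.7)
The proposition has two parts: the isomorphism $\HH^1(\F^{\alpha,\beta};\CC)\simeq \HH^1(\PC^2\setminus \A^{\alpha,\beta};\CC)$, and the combinatorial determination of the characteristic polynomial of the monodromy~$h$ and of the Betti numbers of $\F^{\alpha,\beta}$. The plan is to exploit the eigenspace decomposition~\eqref{eq:Milnor_decomp} together with the divisibility criterion for non-trivial eigenvalues of the Milnor monodromy (see~\cite{Libgober:Monodromy, CS:milnor}): if $\xi\in\mu_n$ is a primitive $d$-th root of unity with $d>1$ and $\HH^1(\F_\A;\CC)_\xi\neq 0$, then $d$ must divide the multiplicity $m_P$ of some singular point $P$ of $\A$.

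Applying this to $\A^{\alpha,\beta}$, since $n=13$ is prime, every non-trivial $\xi\in\mu_{13}$ has order exactly $13$; as the multiplicities of $\A^{\alpha,\beta}$ lie in $\{2,3\}$, the divisibility condition fails. Hence $\HH^1(\F^{\alpha,\beta};\CC)_\xi=0$ for every $\xi\neq 1$, and~\eqref{eq:Milnor_decomp} collapses to the desired isomorphism $\HH^1(\F^{\alpha,\beta};\CC)\simeq \HH^1(\F^{\alpha,\beta};\CC)_1\simeq \HH^1(\PC^2\setminus \A^{\alpha,\beta};\CC)$. By Orlik--Solomon, the last group depends only on the intersection lattice and has dimension $12$. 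Consequently, the characteristic polynomial of $h$ on $\HH^1(\F^{\alpha,\beta};\CC)$ equals $(t-1)^{12}$ and is combinatorial.

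For the Betti numbers, $\F^{\alpha,\beta}\to\PC^2\setminus \A^{\alpha,\beta}$ is an unramified cyclic cover of degree $13$ associated to the character sending each meridian to a primitive $13$-th root of unity; this character is surjective onto $\mu_{13}$, so $\F^{\alpha,\beta}$ is connected and $b_0=1$. The previous paragraph yields $b_1=12$, and $\chi(\F^{\alpha,\beta})=13\cdot \chi(\PC^2\setminus \A^{\alpha,\beta})$ via the same covering; since the right-hand side is combinatorially determined by Orlik--Solomon, $b_2=\chi(\F^{\alpha,\beta})-b_0+b_1$ is combinatorial as well.

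The only delicate step is the invocation of the divisibility criterion; since $13$ is coprime to every multiplicity appearing in $\A^{\alpha,\beta}$, one lands in the simplest instance of that result, and no further analysis of characteristic varieties or translated components is required.
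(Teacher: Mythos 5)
Your argument is correct and follows essentially the same route as the paper: both rest on the eigenspace decomposition of $\HH^1(\F^{\alpha,\beta};\CC)$ and on the vanishing of all nontrivial eigenspaces, deduced from the arithmetic incompatibility between the prime $13$ and the multiplicities $2$ and $3$. The only difference lies in the references invoked: the paper cites Yoshinaga's Corollary~3.12 for the vanishing and Libgober's theorem on arrangements with only double and triple points for the combinatorial claims, whereas you use the classical divisibility criterion and then derive the characteristic polynomial and the Betti numbers directly from Orlik--Solomon and the multiplicativity of the Euler characteristic under the degree-$13$ cover.
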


\begin{proof}
	Consider the eigenspace decomposition of $\HH^1(\F^{\alpha,\beta};\CC)$ given in (\ref{eq:Milnor_decomp}), where we have $\HH^1(\F^{\alpha,\beta};\CC)_1\simeq \HH^1(\PC^2\setminus \A^{\alpha,\beta};\CC)$. As the number of lines of the arrangement $\A^{\alpha,\beta}$ is prime, by~\cite[Corollary 3.12]{Yoshinaga:milnor_fibers}, we deduce that $\HH^1(\F^{\alpha,\beta};\CC)_{\lambda^k}\simeq 0$ for any $k\not\equiv 0\ \text{mod}\ 13$. Then, $\HH^1(\F^{\alpha,\beta};\CC)\simeq \HH^1(\PC^2\setminus \A^{\alpha,\beta};\CC)$, which is combinatorially determined~\cite{OS:combinatorics}. In particular,  the characteristic polynomial of the monodromy $h$ and the Betti numbers of $\F^{\alpha,\beta}$ are also combinatorially determined.
\end{proof}

\bigskip
\section{Degenerations of the configuration $\C_{\alpha,\beta}$}\label{sec:deg}
Using the Zariski pair given in the previous section, we construct two degenerations, which provide other examples of Zariski pairs with 13 lines. Indeed, the $\C_{\alpha,\beta}$ configurations are not rigid, since there exist several possible deformations. For example, we can move the point $S_3$ on the line $(V_2,S_2)$ or the point $S_4$ along the line $(V_1,S_2)$. The first move induces modifications on the surrounding-points $S_5^\alpha$, $S_6^\alpha$ and $S_7^\alpha$, while the second one modifies $S_8^\beta$, $S_9^\beta$ and  $S_{10}^\beta$. We use these two deformations in order to construct degenerated versions of the previous Zariski pair producing one or two points of multiplicity 5 in the new arrangements.

In this section, the proofs are similar to those in Section~\ref{sec:ZP} and are left for the reader.

\subsection{With 1 point of multiplicity 5}\mbox{}

Moving the point $S_3$ along the line $(V_2,S_2)$, we can fix it on the intersection point of $(V_1,S_1)$ and $(V_2,S_2)$. Then we create a 5-point line composed of $V_1,S_1,S_3,S_6^\alpha,S_{10}^\beta$. The planar $(3,2)$-configurations obtained in this way (see Figure~\ref{fig:dual_ZP13_1pt5}) are defined by $\C_{\alpha,\beta}^1=(\V^1,\S_{\alpha,\beta}^1,\L_{\alpha,\beta}^1,\pl)$, where $\V^1=\V=\set{V_1,V_2,V_3}$, with
\[
	V_1=(0:1:0),\quad V_2=(1:0:0),\quad V_3=(0:0:1),
\]
and $\S_{\alpha,\beta}^1=\set{S_1,S_2,S_3,S_4}\sqcup\set{S_5^\alpha,S_6^\alpha,S_7^\alpha}\sqcup\set{S_8^\beta,S_9^\beta,S_{10}^\beta}$ where
\[
	S_1=(1:1:1),\quad S_2=(4:4:1),\quad S_3=(1:4:1),\quad S_4=(8:-1:2),
\]
\[
	S_5^\alpha=(1:4:2\alpha),\quad S_6^\alpha=(1:2\alpha :1),\quad S_7^\alpha=(\alpha:2:2),
\]
\[
	S_8^\beta=(8:-1:4\beta),\quad S_9^\beta=(4\beta:-1:2),\quad S_{10}^\beta=(4:-\beta:4).
\]

\begin{figure}[h!]
	\begin{tikzpicture}

	\def\colorap{blue} 
	\def\coloram{teal}  
	\def\colorbp{red} 
	\def\colorbm{orange} 
	
\begin{scope}[shift={(-4,0)}]
		\draw[dashed, color=gray!50] (0,-2) -- (0,5) ;
		\draw[dashed, color=gray!50] (-2,0) -- (5,0) ;
		\draw[dashed, color=gray!50] (-2,4.5) -- (2.5,4.5) to[out=0,in=90] (4.5,2.5) -- (4.5,-2);

		\draw (-2,-1) -- (3,-1) to[out=0,in=-120] (4.5,0) -- (4.75,0.5);
		\draw (-2,2) -- (3,2) to[out=0,in=100] (4.5,0) -- (4.6,-0.5);
		\draw (-2,1) -- (3,1) to[out=0,in=120] (4.5,0) -- (4.8,-0.5);
		\draw (1,-2) -- (1,3) to[out=90,in=-30] (0,4.5) -- (-0.5,4.75);
		\draw (2,-2) -- (2,3) to[out=90,in=-10] (0,4.5) -- (-0.5,4.55);
		\draw (-2,-2) -- (3,3);
		\draw (-2,1) -- (3,-1.5);
		\draw (1.5,3) -- (-1,-2);

		\draw[color=\colorap] (-2,1.41) -- (3,1.41) to[out=0,in=110] (4.5,0) -- (4.7,-0.5);
		\draw[color=\colorap] (0.71,-2) -- (0.71,3) to[out=90,in=-40] (0,4.5) -- (-0.5,4.85);
		\draw[color=\colorap] (2,2.82) -- (-1.41,-2);


		\draw[color=\colorbp] (-2,-0.71) -- (3,-0.71) to[out=0,in=-130] (4.5,0) -- (4.85,0.5);
		\draw[color=\colorbp] (1.41,-2) -- (1.41,3) to[out=90,in=-20] (0,4.5) -- (-0.5,4.65);
		\draw[color=\colorbp] (-2,1.41) -- (3,-2.12);



		 \node at (0,0) {$\bullet$};
		 \node[right] at (0.15,0) {$V_3$};
		 \node at (0,4.5) {$\bullet$};
		 \node[above] at (0,4.5) {$V_1$};
		 \node at (4.5,0) {$\bullet$};
		 \node[right] at (4.5,0) {$V_2$};
	 
		 \node at (1,1) {$\bullet$}; 
		 \node at (2,2) {$\bullet$}; 
		 \node at (1,2) {$\bullet$}; 
		 \node at (2,-1) {$\bullet$}; 
	 
		 \node[text=\colorap] at (0.71,1.41) {$\bullet$}; 
		 \node[text=\colorap] at (1,1.41) {$\bullet$}; 
		 \node[text=\colorap] at (0.71,1) {$\bullet$};  
	 
		 \node[text=\colorbp] at (1.41,-0.71) {$\bullet$}; 
		 \node[text=\colorbp] at (1.41,-1) {$\bullet$}; 
		 \node[text=\colorbp] at (1,-0.71) {$\bullet$}; 
	\end{scope}

	\begin{scope}[shift={(4,0)}]
	
		\draw[dashed, color=gray!50] (0,-2) -- (0,5) ;
		\draw[dashed, color=gray!50] (-2,0) -- (5,0) ;
		\draw[dashed, color=gray!50] (-2,4.5) -- (2.5,4.5) to[out=0,in=90] (4.5,2.5) -- (4.5,-2);

		\draw (-2,-1) -- (3,-1) to[out=0,in=-120] (4.5,0) -- (4.75,0.5);
		\draw (-2,2) -- (3,2) to[out=0,in=100] (4.5,0) -- (4.6,-0.5);
		\draw (-2,1) -- (3,1) to[out=0,in=120] (4.5,0) -- (4.8,-0.5);
		\draw (1,-2) -- (1,3) to[out=90,in=-30] (0,4.5) -- (-0.5,4.75);
		\draw (2,-2) -- (2,3) to[out=90,in=-10] (0,4.5) -- (-0.5,4.55);
		\draw (-2,-2) -- (3,3);
		\draw (-2,1) -- (3,-1.5);
		\draw (1.5,3) -- (-1,-2);


		\draw[color=\coloram] (-2,-1.41) -- (3,-1.41) to[out=0,in=-100] (4.5,0) -- (4.55,0.5);
		\draw[color=\coloram] (-0.71,-2) -- (-0.71,3) to[out=90,in=-140] (0,4.5) -- (0.5,4.8);
		\draw[color=\coloram] (1.41,-2) -- (-2.12,3);

		\draw[color=\colorbp] (-2,-0.71) -- (3,-0.71) to[out=0,in=-130] (4.5,0) -- (4.85,0.5);
		\draw[color=\colorbp] (1.41,-2) -- (1.41,3) to[out=90,in=-20] (0,4.5) -- (-0.5,4.65);
		\draw[color=\colorbp] (-2,1.41) -- (3,-2.12);



		 \node at (0,0) {$\bullet$};
		 \node[right] at (0.15,0) {$V_3$};
		 \node at (0,4.5) {$\bullet$};
		 \node[above] at (0,4.5) {$V_1$};
		 \node at (4.5,0) {$\bullet$};
		 \node[right] at (4.5,0) {$V_2$};
	 
		 \node at (1,1) {$\bullet$}; 
		 \node at (2,2) {$\bullet$}; 
		 \node at (1,2) {$\bullet$}; 
		 \node at (2,-1) {$\bullet$}; 
	 
		 \node[text=\coloram] at (-0.71,-1.41) {$\bullet$}; 
		 \node[text=\coloram] at (1,-1.41) {$\bullet$}; 
		 \node[text=\coloram] at (-0.71,1) {$\bullet$}; 
	 
		 \node[text=\colorbp] at (1.41,-0.71) {$\bullet$}; 
		 \node[text=\colorbp] at (1.41,-1) {$\bullet$}; 
		 \node[text=\colorbp] at (1,-0.71) {$\bullet$}; 
	\end{scope}
	\end{tikzpicture}
	\caption{The $(3,2)$-configurations $\C^1_{1,1}$ and $\C^1_{1,-1}$. In black, the common points $\V$, $\S$ and their lines. In color, the points and lines corresponding to each configuration for {\color{blue}{$\alpha=1$}}, {\color{teal}{$\alpha=-1$}} and {\color{red}{$\beta=1$}}.}\label{fig:dual_ZP13_1pt5}
\end{figure}

\begin{propo}\label{propo:combi_ZP_1pt5}
	For any $\alpha,\beta\in\set{-1,1}$, the $(3,2)$-configuration $\C^1_{\alpha,\beta}$ is stable and has the following combinatorics
	\begin{equation*}
		\begin{array}{c}
			\left\{\ 
				\big\{V_1,S_1,S_3,S_6^\alpha,S_{10}^\beta\big\},\ \big\{V_1,S_2,S_4\big\},\ \big\{V_1,S_5^\alpha,S_7^\alpha\big\},\ \big\{V_1,S_8^\beta,S_9^\beta\big\},  
				\right. \\[0.5em]
				\big\{V_2,S_1,S_7^\alpha\big\},\ \big\{V_2,S_2,S_3\big\},\ \big\{V_2,S_4,S_9^\beta\big\},\ \big\{V_2,S_5^\alpha,S_6^\alpha\big\},\ \big\{V_2,S_8^\beta,S_{10}^\beta\big\},  \\[0.5em]
				\left. 
				\big\{V_3,S_1,S_2\big\},\ \big\{V_3,S_3,S_5^\alpha\big\},\ \big\{V_3,S_4,S_8^\beta\big\},\ \big\{V_3,S_6^\alpha,S_7^\alpha\big\},\ \big\{V_3,S_9^\beta,S_{10}^\beta\big\}\
			\right\}.
		\ \end{array}
	\end{equation*}
\end{propo}

\begin{proof}
	The description of combinatorics follows from the construction of $\C^1_{\alpha,\beta}$. Note that $\V^1$ is combinatorially characterized by the following properties: $V_1$ is the only point of $\C^1_{\alpha,\beta}$ possessing one~5-point line and three~3-point line, while $V_2$ and $V_3$ are the only one having five~3-point lines. Then, any automorphism of the combinatorics fixes $V_1$ and $\{V_2,V_3\}$, implying that $\C^1_{\alpha,\beta}$ is stable.
\end{proof}

Denote by $\A_1^{\alpha,\beta}$ the respective dual arrangements of $\C^1_{\alpha,\beta}$. Computing $\tau$ for the previous four configurations, we obtain the following result.

\begin{thm}
	Let $\alpha,\beta,\alpha',\beta'\in\set{-1,1}$ be such that $\alpha\beta\neq\alpha'\beta'$. There is no homeomorphism between $(\PC^2,\A_1^{\alpha,\beta})$ and $(\PC^2,\A_1^{\alpha',\beta'})$.
\end{thm}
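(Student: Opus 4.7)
The plan is to mimic the argument used for Theorem~\ref{thm:homeo_ZP13}, invoking Corollary~\ref{cor:invariant} applied to the degenerated configurations $\C^1_{\alpha,\beta}$. The three hypotheses of the corollary are quickly verified: first, $\C^1_{\alpha,\beta}$ is planar because $\V^1=\{V_1,V_2,V_3\}$ consists of the three coordinate points of $\PR^2$, which obviously span the whole real projective plane. Second, as any $(3,2)$-configuration, $\C^1_{\alpha,\beta}$ is uniform with $\pl\equiv 1$ on $\S^1_{\alpha,\beta}$. Third, stability has already been established in Proposition~\ref{propo:combi_ZP_1pt5}, which shows that every automorphism of the combinatorics preserves the vertex set $\V^1$. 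Hence Corollary~\ref{cor:invariant} applies and the chamber weight $\tau(\C^1_{\alpha,\beta})$ is a topological invariant of the dual arrangement $\A_1^{\alpha,\beta}$.

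Next, I would reduce the theorem to the equality $\tau(\C^1_{\alpha,\beta})\neq\tau(\C^1_{\alpha',\beta'})$ whenever $\alpha\beta\neq\alpha'\beta'$. Following the strategy of the proof of Theorem~\ref{thm:homeo_ZP13}, fix the projective transformation sending $(V_1,V_2)$ to the line at infinity, so that $(V_1,V_3)$ and $(V_2,V_3)$ become the coordinate axes of $\RR^2=\PR^2\setminus(V_1,V_2)$. By Remark~\ref{rmk:parity}, it suffices to count the parity of surrounding-points of $\C^1_{\alpha,\beta}$ lying in one of the four open quadrants of $\RR^2$, say the first quadrant $\ch_1$.

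Using the explicit coordinates of the surrounding-points and the pictures in Figure~\ref{fig:dual_ZP13_1pt5}, the enumeration in $\ch_1$ is immediate: the common points $S_1,S_2,S_3$ lie in $\ch_1$, while $S_4\notin\ch_1$; the $\alpha$-family contributes $S_5^\alpha,S_6^\alpha,S_7^\alpha$ exactly when $\alpha=1$ (and none when $\alpha=-1$); analogously for the $\beta$-family in the lower-right quadrant, which by the symmetry $\sigma_2$ of the combinatorics transfers to a parity contribution depending on $\beta$. A direct count yields
\[
    \tau(\C^1_{1,1})=\tau(\C^1_{-1,-1})=0,\qquad \tau(\C^1_{1,-1})=\tau(\C^1_{-1,1})=1,
\]
so that $\tau(\C^1_{\alpha,\beta})$ depends only on the product $\alpha\beta$ and distinguishes the two values.

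The only mildly delicate point is ensuring that the bookkeeping of which surrounding-points fall in the chosen chamber is carried out consistently with respect to the affine chart chosen at infinity; but since Proposition~\ref{propo:tau_constant} guarantees that the value of $\tau$ is independent of the chamber, the count in any fixed quadrant suffices. Combining the last display with Corollary~\ref{cor:invariant}, whenever $\alpha\beta\neq\alpha'\beta'$ we obtain $\tau(\C^1_{\alpha,\beta})\neq\tau(\C^1_{\alpha',\beta'})$, which forbids the existence of a homeomorphism of pairs $(\PC^2,\A_1^{\alpha,\beta})\to(\PC^2,\A_1^{\alpha',\beta'})$ and concludes the argument.
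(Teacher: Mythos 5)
Your proposal is correct and follows exactly the route the paper takes: stability from Proposition~\ref{propo:combi_ZP_1pt5}, the same chamber-count as in the proof of Theorem~\ref{thm:homeo_ZP13} giving $\tau(\C^1_{1,1})=\tau(\C^1_{-1,-1})=0$ and $\tau(\C^1_{1,-1})=\tau(\C^1_{-1,1})=1$, and then Corollary~\ref{cor:invariant}. The only loose spot is the description of the $\beta$-family's contribution (it places exactly one point, $S_{10}^\beta$, in the chosen quadrant when $\beta=-1$ and none when $\beta=1$), but the resulting parities, and hence the conclusion, are as stated.
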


\begin{proof}
	For any $\alpha,\beta\in\{-1,1\}$, the (3,2)-configuration $\C^1_{\alpha,\beta}$ is stable by Proposition~\ref{propo:combi_ZP_1pt5}. Using a similar argument as in the proof of Theorem~\ref{thm:homeo_ZP13}, we have $\tau(\C^1_{1,1})=\tau(\C^1_{-1,-1})=0$ mod $2$ and $\tau(\C^1_{1,-1})=\tau(\C^1_{-1,1})=1$ mod $2$. The result follows by Corollary~\ref{cor:invariant}.
\end{proof}

\begin{cor}
	If $\alpha,\beta,\alpha',\beta'\in\set{1,-1}$ are such that $\alpha\beta\neq\alpha'\beta'$ then $\A_1^{\alpha,\beta}$ and $\A_1^{\alpha',\beta'}$ form a Zariski pair.
\end{cor}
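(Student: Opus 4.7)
The plan is to deduce the corollary directly by combining two facts already at our disposal: the combinatorial equality of all four arrangements $\A_1^{\alpha,\beta}$, and the absence of a homeomorphism $(\PC^2,\A_1^{\alpha,\beta})\to(\PC^2,\A_1^{\alpha',\beta'})$ whenever $\alpha\beta\neq\alpha'\beta'$. Recall that, by definition (see Section~\ref{subsec:arrangement}), a Zariski pair is a pair of line arrangements sharing the same combinatorics but having different topological types. Thus the corollary will follow as soon as both ingredients are in place.

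First, I would observe that the explicit description of the combinatorics of $\C^1_{\alpha,\beta}$ given in Proposition~\ref{propo:combi_ZP_1pt5} is independent of the signs $\alpha,\beta\in\{-1,1\}$: the list of collinearities involves the formal symbols $S_5^\alpha,\ldots,S_{10}^\beta$, so replacing the signs yields a tautological bijection between the point sets respecting all collinearity relations. Hence the four $(3,2)$-configurations $\C^1_{\alpha,\beta}$ share a common combinatorics. Then, invoking Proposition~\ref{propo:equivalence}, which asserts that a configuration and its dual arrangement carry the same combinatorial data, we conclude that the four arrangements $\A_1^{\alpha,\beta}$ have the same combinatorics as line arrangements in $\PC^2$.

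Second, for the topological distinction, I would directly apply the preceding theorem: when $\alpha\beta\neq\alpha'\beta'$, no homeomorphism of $\PC^2$ sends $\A_1^{\alpha,\beta}$ to $\A_1^{\alpha',\beta'}$. This was obtained through Corollary~\ref{cor:invariant} applied to the stable planar uniform $(3,2)$-configurations $\C^1_{\alpha,\beta}$, using that the chamber weights satisfy $\tau(\C^1_{1,1})=\tau(\C^1_{-1,-1})=0$ and $\tau(\C^1_{1,-1})=\tau(\C^1_{-1,1})=1$ in $\ZZ_2$.

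Combining these two observations yields the corollary. There is essentially no obstacle here: the real content lies in the two previous statements (combinatorial equality and topological inequivalence), and the corollary is a direct repackaging of them under the terminology of Zariski pairs. The only mild subtlety worth pointing out in the write-up is the distinction between ordered and unordered topology: since Corollary~\ref{cor:invariant} is used in the proof of the preceding theorem to handle stable configurations, the unordered topological inequivalence is already established, which is exactly what the definition of Zariski pair requires.
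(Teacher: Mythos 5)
Your proposal is correct and follows exactly the route the paper intends: the corollary is an immediate consequence of Proposition~\ref{propo:combi_ZP_1pt5} (identical combinatorics of the four configurations, transferred to the dual arrangements via Proposition~\ref{propo:equivalence}) together with the preceding theorem (topological inequivalence when $\alpha\beta\neq\alpha'\beta'$), which is why the paper states it without a written proof. Your remark on the ordered versus unordered subtlety being absorbed by Corollary~\ref{cor:invariant} via stability is accurate and matches the paper's handling.
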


	We can repeat the same procedure by moving the point $S_4$ on the intersection of $(V_1,S_2)$ and $(V_2,S_1)$ instead of moving the point $S_3$, but the configuration obtained with this modification is $\C^1_{\alpha,\beta}$ up to symmetry.
	
\subsection{With 2 points of multiplicity 5}\mbox{}

We can perform the above procedure by moving both $S_3$ and $S_4$. We fix the first point on the intersection of $(V_1,S_1)$ and $(V_2,S_2)$, and thus $S_4$ as the intersection point of $(V_1,S_2)$ and $(V_2,S_1)$. In this way, we create two~$5$-point lines: $V_1,S_1,S_3,S_6^\alpha,S_{10}^\beta$ in a first time, and $V_2,S_1,S_4,S_7^\alpha,S_9^\beta$ in a second one. The configurations obtained are in Figure~\ref{fig:dual_ZP13_2pts5} (for $(\alpha,\beta)=(1,1)$ and $(\alpha,\beta)=(1,-1)$). They are defined by $\C^2_{\alpha,\beta}=(\V^2,\S_{\alpha,\beta}^2,\L_{\alpha,\beta}^2,\pl)$, where $\V^2=\V=\set{V_1,V_2,V_3}$, with
\[
	V_1=(0:1:0),\quad V_2=(1:0:0),\quad V_3=(0:0:1),
\]
and $\S_{\alpha,\beta}^2=\set{S_1,S_2,S_3,S_4}\sqcup\set{S_5^\alpha,S_6^\alpha,S_7^\alpha}\sqcup\set{S_8^\beta,S_9^\beta,S_{10}^\beta}$ with
\[
	S_1=(1:1:1),\quad S_2=(4:4:1),\quad S_3=(1:4:1),\quad S_4=(4:1:1),
\]
\[
	S_5^\alpha=(1:4:2\alpha),\quad S_6^\alpha=(1:2\alpha :1),\quad S_7^\alpha=(\alpha:2:2),
\]
\[
	S_8^\beta=(4:1:2\beta),\quad S_9^\beta=(2\beta:1:1),\quad S_{10}^\beta=(2:\beta:2).
\]

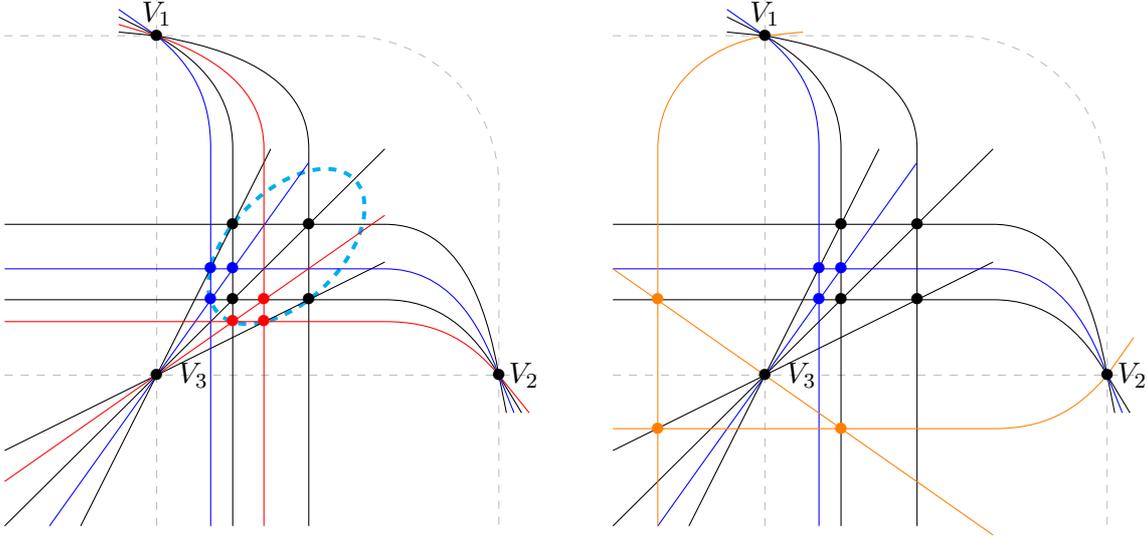
\begin{figure}[h!]
	\begin{tikzpicture}

	\def\colorap{blue} 
	\def\coloram{teal}  
	\def\colorbp{red} 
	\def\colorbm{orange} 
	
\begin{scope}[shift={(-4,0)}]
	\tikzset{
	    myConic/.style = {color=cyan, dashed, smooth, samples=200, line width=1.5}
	}
	\draw[domain=sqrt(2)/2-0.025:2*sqrt(2)-0.09062][myConic] plot[variable=\x] ({\x}, { (1/4*sqrt(2)*(sqrt(2)*\x + sqrt(2) - sqrt(-6*\x*\x + 6*\x*(sqrt(2) + 2)
- 10*sqrt(2) + 3) + 1) });
	\draw[domain=sqrt(2)/2-0.025:2*sqrt(2)-0.09062][myConic] plot[variable=\x] ({\x}, { (1/4*sqrt(2)*(sqrt(2)*\x + sqrt(2) + sqrt(-6*\x*\x + 6*\x*(sqrt(2) + 2)
- 10*sqrt(2) + 3) + 1) });

		\draw[dashed, color=gray!50] (0,-2) -- (0,5) ;
		\draw[dashed, color=gray!50] (-2,0) -- (5,0) ;
		\draw[dashed, color=gray!50] (-2,4.5) -- (2.5,4.5) to[out=0,in=90] (4.5,2.5) -- (4.5,-2);

		\draw (-2,2) -- (3,2) to[out=0,in=100] (4.5,0) -- (4.6,-0.5);
		\draw (-2,1) -- (3,1) to[out=0,in=120] (4.5,0) -- (4.8,-0.5);
		\draw (1,-2) -- (1,3) to[out=90,in=-30] (0,4.5) -- (-0.5,4.75);
		\draw (2,-2) -- (2,3) to[out=90,in=-10] (0,4.5) -- (-0.5,4.55);
		\draw (-2,-2) -- (3,3);
		\draw (-2,-1) -- (3,1.5);
		\draw (1.5,3) -- (-1,-2);

		\draw[color=\colorap] (-2,1.41) -- (3,1.41) to[out=0,in=110] (4.5,0) -- (4.7,-0.5);
		\draw[color=\colorap] (0.71,-2) -- (0.71,3) to[out=90,in=-40] (0,4.5) -- (-0.5,4.85);
		\draw[color=\colorap] (2,2.82) -- (-1.41,-2);


		\draw[color=\colorbp] (-2,0.71) -- (3,0.71) to[out=0,in=130] (4.5,0) -- (4.9,-0.5);
		\draw[color=\colorbp] (1.41,-2) -- (1.41,3) to[out=90,in=-20] (0,4.5) -- (-0.5,4.65);
		\draw[color=\colorbp] (-2,-1.41) -- (3,2.12);



		 \node at (0,0) {$\bullet$};
		 \node[right] at (0.15,0) {$V_3$};
		 \node at (0,4.5) {$\bullet$};
		 \node[above] at (0,4.5) {$V_1$};
		 \node at (4.5,0) {$\bullet$};
		 \node[right] at (4.5,0) {$V_2$};
	 
		 \node at (1,1) {$\bullet$}; 
		 \node at (2,2) {$\bullet$}; 
		 \node at (1,2) {$\bullet$}; 
		 \node at (2,1) {$\bullet$}; 
	 
		 \node[text=\colorap] at (0.71,1.41) {$\bullet$}; 
		 \node[text=\colorap] at (1,1.41) {$\bullet$}; 
		 \node[text=\colorap] at (0.71,1) {$\bullet$};  
	 
		 \node[text=\colorbp] at (1.41,0.71) {$\bullet$}; 
		 \node[text=\colorbp] at (1.41,1) {$\bullet$}; 
		 \node[text=\colorbp] at (1,0.71) {$\bullet$}; 
	\end{scope}

	\begin{scope}[shift={(4,0)}]
		\draw[dashed, color=gray!50] (0,-2) -- (0,5) ;
		\draw[dashed, color=gray!50] (-2,0) -- (5,0) ;
		\draw[dashed, color=gray!50] (-2,4.5) -- (2.5,4.5) to[out=0,in=90] (4.5,2.5) -- (4.5,-2);

		\draw (-2,2) -- (3,2) to[out=0,in=100] (4.5,0) -- (4.6,-0.5);
		\draw (-2,1) -- (3,1) to[out=0,in=120] (4.5,0) -- (4.8,-0.5);
		\draw (1,-2) -- (1,3) to[out=90,in=-30] (0,4.5) -- (-0.5,4.75);
		\draw (2,-2) -- (2,3) to[out=90,in=-10] (0,4.5) -- (-0.5,4.55);
		\draw (-2,-2) -- (3,3);
		\draw (-2,-1) -- (3,1.5);
		\draw (1.5,3) -- (-1,-2);

		\draw[color=\colorap] (-2,1.41) -- (3,1.41) to[out=0,in=110] (4.5,0) -- (4.7,-0.5);
		\draw[color=\colorap] (0.71,-2) -- (0.71,3) to[out=90,in=-40] (0,4.5) -- (-0.5,4.85);
		\draw[color=\colorap] (2,2.82) -- (-1.41,-2);



		\draw[color=\colorbm] (-2,-0.71) -- (3,-0.71) to[out=0,in=-130] (4.5,0) -- (4.85,0.5);
		\draw[color=\colorbm] (-1.41,-2) -- (-1.41,3) to[out=90,in=-170] (0,4.5) -- (0.5,4.55);
		\draw[color=\colorbm] (-2,1.41) -- (3,-2.12);


		 \node at (0,0) {$\bullet$};
		 \node[right] at (0.15,0) {$V_3$};
		 \node at (0,4.5) {$\bullet$};
		 \node[above] at (0,4.5) {$V_1$};
		 \node at (4.5,0) {$\bullet$};
		 \node[right] at (4.5,0) {$V_2$};
	 
		 \node at (1,1) {$\bullet$}; 
		 \node at (2,2) {$\bullet$}; 
		 \node at (1,2) {$\bullet$}; 
		 \node at (2,1) {$\bullet$}; 
	 
		 \node[text=\colorap] at (0.71,1.41) {$\bullet$}; 
		 \node[text=\colorap] at (1,1.41) {$\bullet$}; 
		 \node[text=\colorap] at (0.71,1) {$\bullet$};  
	 
		 \node[text=\colorbm] at (-1.41,-0.71) {$\bullet$}; 
		 \node[text=\colorbm] at (-1.41,1) {$\bullet$}; 
		 \node[text=\colorbm] at (1,-0.71) {$\bullet$}; 
	\end{scope}
	\end{tikzpicture}
	\caption{The $(3,2)$-configurations $\C^2_{1,1}$ and $\C^2_{1,-1}$, as well as the conic joining the six points $S_3,S_4,S_5^+,S_7^+,S_8^+,S_{10}^+$. In black, the common points $\V^2$, $\S^2$ and their lines. In color, the points and lines corresponding to each configuration for {\color{blue}{$\alpha=1$}}, {\color{red}{$\beta=1$}}, {\color{orange}{$\beta=-1$}}. }\label{fig:dual_ZP13_2pts5}
\end{figure}

\begin{propo}\label{propo:combi_ZP_2pts5}
	For any $\alpha,\beta\in\set{-1,1}$, the $(3,2)$-configuration $\C^2_{\alpha,\beta}$ is stable and has the following combinatorics
	\begin{equation*}
		\begin{array}{c}
			\left\{\ 
				\big\{V_1,S_1,S_3,S_6^\alpha,S_{10}^\beta\big\},\ \big\{V_1,S_2,S_4\big\},\ \big\{V_1,S_5^\alpha,S_7^\alpha\big\},\ \big\{V_1,S_8^\beta,S_9^\beta\big\},  
				\right. \\[0.5em]
				\quad \big\{V_2,S_1,S_4,S_7^\alpha,S_9^\beta\big\},\ \big\{V_2,S_2,S_3\big\},\ \big\{V_2,S_5^\alpha,S_6^\alpha\big\},\ \big\{V_2,S_8^\beta,S_{10}^\beta\big\},  \\[0.5em]
				\left. \qquad
				\big\{V_3,S_1,S_2\big\},\ \big\{V_3,S_3,S_5^\alpha\big\},\ \big\{V_3,S_4,S_8^\beta\big\},\ \big\{V_3,S_6^\alpha,S_7^\alpha\big\},\ \big\{V_3,S_9^\beta,S_{10}^\beta\big\}\
			\right\}.
		\ \end{array}
	\end{equation*}
\end{propo}

We denote by $\A_2^{\alpha,\beta}$ the respective dual arrangements of $\C^2_{\alpha,\beta}$. Computing the chamber weight $\tau$ for these four configurations, we obtain the following return.

\begin{thm}\label{thm:ZP13-2pts5}
	Let $\alpha,\beta,\alpha',\beta'\in\set{-1,1}$ be such that $\alpha\beta\neq\alpha'\beta'$. There is no homeomorphism between $(\PC^2,\A_2^{\alpha,\beta})$ and $(\PC^2,\A_2^{\alpha',\beta'})$.
\end{thm}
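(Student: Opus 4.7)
The plan is to follow the same strategy used for Theorem~\ref{thm:homeo_ZP13} and its first degeneration: reduce the topological distinction to the computation of the chamber weight via Corollary~\ref{cor:invariant}, and then read off the relevant parities directly from the real picture of each configuration.

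First, I would invoke Proposition~\ref{propo:combi_ZP_2pts5}, which gives us that each $\C^2_{\alpha,\beta}$ is a stable planar $(3,2)$-configuration; as noted in Section~\ref{subsec:arrangement}, any such configuration is automatically uniform with constant plumbing $\pl(S)=1\in\ZZ_2$. Thus all four configurations $\C^2_{1,1}$, $\C^2_{1,-1}$, $\C^2_{-1,1}$, $\C^2_{-1,-1}$ satisfy the hypotheses of Corollary~\ref{cor:invariant}, which means that $\tau(\C^2_{\alpha,\beta})$ is a topological invariant of the dual arrangement $\A_2^{\alpha,\beta}$. It is therefore enough to exhibit two distinct values of this invariant across the pairs $(\alpha,\beta)$ with $\alpha\beta$ fixed versus varying.

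Next, following the normalization used in the proof of Theorem~\ref{thm:homeo_ZP13}, I would apply a projective change of coordinates sending $(V_1,V_2)$ to the line at infinity of $\PR^2$, so that $(V_1,V_3)$ and $(V_2,V_3)$ become the coordinate axes of the affine chart $\RR^2=\PR^2\setminus(V_1,V_2)$. The four chambers $\ch_1,\ldots,\ch_4$ then correspond to the four open quadrants, and by Remark~\ref{rmk:parity} the chamber weight $\tau(\C^2_{\alpha,\beta})$ is simply the parity (mod $2$) of the number of surrounding-points lying in the first quadrant $\ch_1$. With the explicit coordinates given for $\S^2_{\alpha,\beta}$ and the coloring conventions of Figure~\ref{fig:dual_ZP13_2pts5}, one checks that the common surrounding-points $S_1,S_2,S_3,S_4$ all belong to $\ch_1$, while the contribution from $\{S_5^\alpha,S_6^\alpha,S_7^\alpha\}$ lies in $\ch_1$ exactly when $\alpha=1$ (otherwise they migrate to adjacent quadrants), and similarly for $\{S_8^\beta,S_9^\beta,S_{10}^\beta\}$ depending on $\beta$.

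Counting the parities, I expect to get $\tau(\C^2_{1,1})=\tau(\C^2_{-1,-1})=0$ and $\tau(\C^2_{1,-1})=\tau(\C^2_{-1,1})=1$, so the value of $\tau$ depends only on the product $\alpha\beta$. Combined with Corollary~\ref{cor:invariant}, this immediately yields the non-existence of a homeomorphism between $(\PC^2,\A_2^{\alpha,\beta})$ and $(\PC^2,\A_2^{\alpha',\beta'})$ whenever $\alpha\beta\neq\alpha'\beta'$. The only subtle point, which is the same as in the earlier cases, is to make sure the parity count is done on the affine picture after sending the vertex line to infinity; this is a routine verification using the coordinates provided, and the proof proceeds verbatim as in Theorem~\ref{thm:homeo_ZP13}, so the argument can be left to the reader.
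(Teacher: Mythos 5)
Your proposal is correct and follows exactly the route the paper intends for this theorem (which it explicitly leaves to the reader as being identical to the proof of Theorem~\ref{thm:homeo_ZP13}): stability from Proposition~\ref{propo:combi_ZP_2pts5}, uniformity of any $(3,2)$-configuration, and the parity count of surrounding-points in the first quadrant after sending $(V_1,V_2)$ to infinity, giving $\tau(\C^2_{\alpha,\beta})=0$ iff $\alpha\beta=1$ and concluding via Corollary~\ref{cor:invariant}. The explicit coordinates confirm your count ($S_1,\dots,S_4$ always in $\ch_1$; all of $S_5^\alpha,S_6^\alpha,S_7^\alpha$ in $\ch_1$ iff $\alpha=1$ and none otherwise, likewise for the $\beta$-triple), so the argument is complete.
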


\begin{cor}
	If $\alpha,\beta,\alpha',\beta'\in\set{-1,1}$ are such that $\alpha\beta\neq\alpha'\beta'$ then $\A_2^{\alpha,\beta}$ and $\A_2^{\alpha',\beta'}$ form a Zariski pair.
\end{cor}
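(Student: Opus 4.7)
The plan is to mimic exactly the argument used for Theorem~\ref{thm:homeo_ZP13}, since the setup here is entirely parallel: we have a family of four stable planar uniform $(3,2)$-configurations sharing a common combinatorics, and Corollary~\ref{cor:invariant} applies whenever these configurations are distinguished by their chamber weight $\tau$. Concretely, the proposal is to reduce the homeomorphism question to a parity count of surrounding-points in a fixed affine chamber.

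First I would invoke Proposition~\ref{propo:combi_ZP_2pts5}, which provides both the shared combinatorics of the four configurations $\C^2_{\alpha,\beta}$ and their stability. Since each $\C^2_{\alpha,\beta}$ is a $(3,2)$-configuration, it is automatically uniform (the plumbing is constant equal to $1\in\ZZ_2$), and it is planar because $V_1=(0:1:0)$, $V_2=(1:0:0)$, $V_3=(0:0:1)$ span $\PR^2$. Hence the hypotheses of Corollary~\ref{cor:invariant} are satisfied for each configuration in the family, so $\tau(\C^2_{\alpha,\beta})$ is a topological invariant of the dual arrangement $\A_2^{\alpha,\beta}$.

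Next I would carry out the chamber-weight computation, using Remark~\ref{rmk:parity}: after the projective change of coordinates sending $(V_1,V_2)$ to the line at infinity and fixing $(V_1,V_3)$, $(V_2,V_3)$ as the coordinate axes of $\RR^2=\PR^2\setminus(V_1,V_2)$, the value $\tau(\C^2_{\alpha,\beta})$ equals the parity of $\#(\S^2_{\alpha,\beta}\cap\ch_1)$, where $\ch_1$ is the open first quadrant. A direct inspection of the explicit coordinates given for $S_1,\dots,S_4,S_5^\alpha,\dots,S_{10}^\beta$ (or equivalently of the two pictures in Figure~\ref{fig:dual_ZP13_2pts5}, together with the symmetry $\beta\mapsto-\beta$ that moves the corresponding three surrounding-points out of $\ch_1$) shows that
\begin{equation*}
\tau(\C^2_{1,1})=\tau(\C^2_{-1,-1})=0,\qquad \tau(\C^2_{1,-1})=\tau(\C^2_{-1,1})=1.
\end{equation*}
Thus $\tau(\C^2_{\alpha,\beta})\neq\tau(\C^2_{\alpha',\beta'})$ whenever $\alpha\beta\neq\alpha'\beta'$, and Corollary~\ref{cor:invariant} yields that no homeomorphism of $\PC^2$ can send $\A_2^{\alpha,\beta}$ onto $\A_2^{\alpha',\beta'}$.

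The only part that requires any care, as opposed to being routine, is confirming the count of surrounding-points lying in $\ch_1$ for each of the four configurations; this is the step where an error could sneak in and it is the one I would double-check from the coordinates. Everything else is a direct application of machinery already established: stability from Proposition~\ref{propo:combi_ZP_2pts5}, invariance of $\tau$ from Corollary~\ref{cor:invariant}, and the parity reformulation from Remark~\ref{rmk:parity}.
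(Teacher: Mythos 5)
Your proposal is correct and follows exactly the route the paper intends: the paper explicitly leaves this proof to the reader as being identical to the argument for Theorem~\ref{thm:homeo_ZP13}, namely stability and uniformity from Proposition~\ref{propo:combi_ZP_2pts5}, invariance of $\tau$ via Corollary~\ref{cor:invariant}, and the parity count in a chamber giving $\tau(\C^2_{1,1})=\tau(\C^2_{-1,-1})=0$ and $\tau(\C^2_{1,-1})=\tau(\C^2_{-1,1})=1$. Your chamber counts check out against the explicit coordinates (all ten surrounding-points lie in the first quadrant when $\alpha=\beta=1$, and each sign flip of $\alpha$ or $\beta$ moves exactly three of them out), so nothing is missing.
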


\begin{rmk}\label{rmk:deg_geom_prop}~
	\begin{enumerate}
	\item The configurations $\C^2_{\alpha,\beta}$ reveal a geometric distinction of the Zariski pair. We have $\alpha=\beta$ (i.e. the value $\tau(\C_{\alpha,\beta}^2)=0$) if and only if the six points $S_3, S_4, S_5^\alpha, S_7^\alpha, S_8^\beta$ and $S_{10}^\beta$ are contained in a conic.
	
	\item The characterization of the value $\tau(\C_{\alpha,\beta}^2)$ by the concurrence of the lines $(V_3,S_1,S_2)$, $(V_2,S_5,S_6)$ and $(V_1,S_8,S_9)$ showed in Proposition~\ref{propo:pts_align} holds for this degeneration.
	\end{enumerate}
\end{rmk}

We prove in the following section that the above geometrical properties characterize topologically the connected components of the moduli space of the associated dual arrangements.

\bigskip
\section{Moduli spaces and geometrical characterizations}\label{sec:MS}
In this section, we are interested in the moduli space of realizations of the Zariski pair giving by $\A_2^{\alpha,\beta}$, presented in Section~\ref{sec:deg}. The others can be computed in a similar manner.

\subsection{Realizations of $\A_2^{\alpha,\beta}$}\mbox{}

\subsubsection{Computation of the moduli space}

The \emph{moduli space} $\Sigma_\A$ of an arrangement $\A$ (composed of $n$ lines) is defined by:
\begin{equation*}
	\Sigma_\A = \{ \B \in (\PCc^2)^n\, \mid\, \B\sim\A\} / \PGL_3(\CC),
\end{equation*}
where $\B\sim\A$ means that $\A$ and $\B$ have the same combinatorics.

\begin{rmk}
	Giving an ordering on the set of lines of the combinatorics, we can define accordingly the \emph{ordered moduli space}.
\end{rmk}

\begin{thm}\label{thm:moduli}
	The moduli space $\Sigma_2$ of $\A_2^{\alpha,\beta}$ is formed by the arrangements $\M^{\kappa_1,\kappa_2}_\gamma$ composed of the lines
	\[
		L_1: y=0,\quad L_2: x=0,\quad L_3: z=0,\quad 
		L_4: x+y+z=0,	
	\]
	\[
		L_5: \gamma x+\gamma y+z=0,\quad	
		L_6: x + \gamma y+z=0,\quad	
		L_7: \gamma x + y + z=0,	
	\]
	\[
		L_8: \kappa_1^{-1} x + \kappa_1 y + z=0,\quad 
		L_9: x +\kappa_1 y + z=0,\quad	
		L_{10}: \kappa_1^{-1} x + y + z=0,	
	\]
	\[
		L_{11}: \kappa_2 x + \kappa_2^{-1} y +  z=0,\quad 
		L_{12}: \kappa_2 x + y +z=0,\quad	
		L_{13}: x + \kappa_2^{-1} y + z=0.	
	\]
	where $\kappa_1^2=\kappa_2^2=\gamma\in\CC^*$ satisfying $\kappa_1^3,\kappa_2^3\neq 1$, $\kappa_1\kappa_2\neq 1$ and:
	\begin{multicols}{2}
	\begin{enumerate}
		\item if $\kappa_1=\kappa_2$:
		\begin{itemize}
			\item $2\kappa_1^2 + \kappa_1 + 1\neq0$,
			\item $2\kappa_1^2 + 2\kappa_1 + 1\neq0$,
			\item $\kappa_1\neq -1/2$,
			\item $\kappa_1^3 + 3\kappa_1^2 + 2\kappa_1 + 1\neq0$,
			\item $\kappa_1^3 + 2\kappa_1^2 + \kappa_1 + 1\neq0$;
		\end{itemize}
		
		\item if $\kappa_1=-\kappa_2$:
		\begin{itemize}
			\item $\kappa_1^3 + \kappa_1^2 + 1\neq0$,
			\item $\kappa_1^3 - \kappa_1^2 - 1\neq0$,
			\item $2\kappa_1^2 + 1\neq0$,
			\item $\kappa_1^3 + \kappa_1 - 1\neq0$,
			\item $\kappa_1^3 + \kappa_1 + 1\neq0$.
		\end{itemize}
	\end{enumerate}
	\end{multicols}
\end{thm}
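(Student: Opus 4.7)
The approach is a direct computation of the realization space of the combinatorics given in Proposition~\ref{propo:combi_ZP_2pts5}, proceeding by fixing coordinates via $\PGL_3(\CC)$, writing each line as an unknown with equations dictated by the combinatorial incidences, and reading off the open conditions at the end.

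\emph{Step 1 (Normalization).} The vertices $V_1,V_2,V_3$ are non-collinear (the configuration is planar), so their dual lines form a triangle in $\PC^2$, and the combinatorics forces $S_1^\bullet$ to avoid its three vertices (otherwise an extra triple point would appear). Since $\PGL_3(\CC)$ acts freely and transitively on ordered $4$-tuples consisting of a triangle together with a line in general position, I may assume $V_1^\bullet,V_2^\bullet,V_3^\bullet,S_1^\bullet$ are the lines $L_1,L_2,L_3,L_4$ of the statement. The three points $L_4\cap L_i$ then read $(1:0:-1),(0:1:-1),(1:-1:0)$; by the combinatorics the first two are the quintuple points and the last is the triple point $\{V_3,S_1,S_2\}$.

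\emph{Step 2 (Parametrization of the pencils).} Any line through $(1:-1:0)$ other than $L_3,L_4$ has a unique equation $\gamma x+\gamma y+z=0$, and I assign this to $S_2^\bullet=L_5$, introducing the parameter $\gamma\in\CC^*$. Lines through $(1:0:-1)$ distinct from $L_1,L_4$ are of the form $x+\mu y+z=0$, and the combinatorics places $S_3^\bullet,S_6^{\alpha\bullet},S_{10}^{\beta\bullet}$ in this pencil; symmetrically $S_4^\bullet,S_7^{\alpha\bullet},S_9^{\beta\bullet}$ lie in the pencil $\nu x+y+z=0$ through $(0:1:-1)$. The remaining two lines $S_5^{\alpha\bullet},S_8^{\beta\bullet}$ pass through none of these distinguished points and must be recovered from the triple-point relations.

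\emph{Step 3 (Closing the system).} The alignments $\{V_3,S_3,S_5^\alpha\}$, $\{V_2,S_5^\alpha,S_6^\alpha\}$, $\{V_3,S_6^\alpha,S_7^\alpha\}$, $\{V_2,S_6^\alpha,S_5^\alpha\}$, $\{V_1,S_5^\alpha,S_7^\alpha\}$ and $\{V_2,S_7^\alpha,\ldots\}$ form a combinatorial cycle binding the three pencil parameters associated with $S_5^{\alpha\bullet},S_6^{\alpha\bullet},S_7^{\alpha\bullet}$ to $\gamma$. Three of these incidences determine the three parameters as rational expressions in a single new variable $\kappa_1$; the fourth (the ``closing'' relation) yields, after elimination, the quadratic $\kappa_1^2=\gamma$. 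A symmetric argument applied to the cycle involving $S_8^{\beta\bullet},S_9^{\beta\bullet},S_{10}^{\beta\bullet}$ gives a second variable $\kappa_2$ with $\kappa_2^2=\gamma$. Substituting back recovers the explicit equations of $L_5,\ldots,L_{13}$. Note that the case split $\kappa_1=\kappa_2$ versus $\kappa_1=-\kappa_2$ corresponds to the two square roots of $\gamma$ and accounts for the dichotomy in the statement. The inequalities then come from requiring that none of the following forbidden collisions occur: two of the thirteen lines coinciding (this rules out $\kappa_1\kappa_2=1$ and $\gamma=1$), an intended triple point acquiring a fourth concurrent line, or three surrounding-points developing an unexpected alignment. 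Checking each collision case by case yields exactly the two lists of polynomial inequalities displayed in the statement.

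\emph{Principal difficulty.} The hard step is Step~3: fifteen concurrences are imposed on only ten unknown lines, so most constraints must be redundant modulo the two quadratics $\kappa_i^2=\gamma$. Establishing redundancy amounts to a rank computation on a matrix of polynomials in $(\gamma,\kappa_1,\kappa_2)$, and the conceptually delicate point is to certify that no further hidden polynomial relation — beyond the two quadratics and the listed inequalities — persists in the system. This is the obstruction that takes the bulk of the effort, and the argument must be carried out separately for the three sub-cycles attached to $\{S_5^\alpha,S_6^\alpha,S_7^\alpha\}$, $\{S_8^\beta,S_9^\beta,S_{10}^\beta\}$, and their interaction via $L_4,L_5$.
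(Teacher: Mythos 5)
Your proposal is correct and follows essentially the same route as the paper: normalize four of the thirteen lines (equivalently, the four dual points $V_1,V_2,V_3,S_1$) by $\PGL_3(\CC)$, use the prescribed concurrences to express the remaining lines in terms of $\gamma$ and two pencil parameters, obtain $\kappa_1^2=\kappa_2^2=\gamma$ from the closing incidences $\{V_1,S_5^\alpha,S_7^\alpha\}$ and its symmetric counterpart, and then enumerate the forbidden coincidences and extra alignments to produce the two lists of inequalities. The only cosmetic difference is that the paper carries out the computation on the dual point configuration and dualizes at the very end, whereas you work directly with the lines.
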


\begin{proof}
	Note that we are identifying
	\[
		L_i=V_i^\bullet,\ i=1,2,3\quad \text{and}\quad L_{j+3}=S_j^\bullet,\ j=1,\ldots,10.
	\]
	Using the action of $\PGL_3(\CC)$, we fix the lines $L_1,\dots,L_4$ as in the statement. From the concurrence conditions given by the combinatorics in Proposition~\ref{propo:combi_ZP_2pts5}, we obtain the equations of the lines $L_5,\dots,L_{13}$ and these depend on $\gamma,\kappa_1,\kappa_2\in\CC\setminus\{0,1\}$. The details of these computations are omitted. Note that the relation $\gamma=\kappa_1^2=\kappa_2^2$ comes from the two triple points $\{L_3,L_6,L_8\}$ and $\{L_3,L_7,L_{11}\}$. 
	
	We deduce from the equations of the lines above that we obtain different collapses between lines when $\kappa_1^3=1$, $\kappa_2^3=1$ or $\kappa_1\kappa_2=1$. Finally, we avoid extra concurrences between lines when we remove the values of $\kappa_1$ and $\kappa_2$ listed in the statement, which correspond respectively to the following incidences:
	\begin{enumerate}
		\item if $\kappa_1=\kappa_2$,
		\[
				\{L_5,L_8,L_{11}\},\ \{L_5,L_8,L_{12}\},\ \{L_5,L_9,L_{12}\},
		\]
		\[
		 \{L_5,L_8,L_{13}\}\ \text{and}\ \{L_5,L_{10},L_{11}\},\ \{L_6,L_{10},L_{11}\}\ \text{and}\ \{L_7,L_8,L_{13}\};
		\]
		
		\item if $\kappa_1=-\kappa_2$,
		\[
				\{L_5,L_8,L_{13}\},\ \{L_5,L_{10},L_{11}\},\ \{L_5,L_{10},L_{13}\},\ \{L_6,L_{10},L_{11}\},\ \{L_7,L_8,L_{13}\}.
		\]
	\end{enumerate}
	
\end{proof}

\subsubsection{Realization over a number field}

A Zariski pair is said to be \emph{arithmetic} if the equations of the arrangements are Galois-conjugated in a number field (as the example in~\cite{Guerville:ZP,ACGM:ZP}). Such pairs are particular since they cannot be distinguished by algebraic arguments.

\begin{cor}
	There is no arithmetic Zariski pair with the combinatorics of $\A_2^{\alpha,\beta}$.
\end{cor}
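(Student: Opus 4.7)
The plan is to show that the relation $\kappa_1 = \pm\kappa_2$ distinguishing the two connected components of $\Sigma_2$ is preserved under any Galois conjugation, so that no Galois orbit can hit both components.

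First I would identify explicitly which component corresponds to which member of the Zariski pair. From Theorem~\ref{thm:moduli}, every arrangement in $\Sigma_2$ is projectively equivalent to some $\M^{\kappa_1,\kappa_2}_\gamma$ with $\kappa_1^2=\kappa_2^2=\gamma$, so after the moduli-normalization $(V_1,V_2,V_3,S_1)$ is fixed, we have $\kappa_2=\epsilon\kappa_1$ with $\epsilon\in\{\pm1\}$. Matching the normalized coordinates of $S_6$ and $S_9$ with those of the configurations $\C^2_{\alpha,\beta}$ in Section~\ref{sec:deg} gives $\kappa_1=2\alpha$ and $\kappa_2=2\beta$, so $\epsilon=\alpha\beta$. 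By Theorem~\ref{thm:ZP13-2pts5} the two values of $\alpha\beta\in\{\pm 1\}$ yield the two topological types, so the sign $\epsilon=\kappa_2/\kappa_1$ is a topological invariant of the arrangement (given the chosen normalization).

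Next I would use that the moduli space decomposes as $\Sigma_2=\Sigma_2^+\sqcup\Sigma_2^-$ where $\Sigma_2^{\pm}=\{\kappa_2=\pm\kappa_1\}$. Suppose for contradiction that $\A_2^{\alpha,\beta}$ and $\A_2^{\alpha',\beta'}$ with $\alpha\beta\neq\alpha'\beta'$ are Galois-conjugated over a number field $\KK$, i.e.\ there exists $\sigma\in\mathrm{Gal}(\overline{\QQ}/\QQ)$ sending the equations of one to those of the other (up to projective equivalence). Pick realizations $\M^{\kappa_1,\kappa_2}_\gamma\in\Sigma_2^{\epsilon}$ and $\M^{\kappa_1',\kappa_2'}_{\gamma'}\in\Sigma_2^{-\epsilon}$ with $\sigma(\kappa_1)=\kappa_1'$ and $\sigma(\kappa_2)=\kappa_2'$. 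Since $\sigma$ is a field automorphism and $\kappa_2=\epsilon\kappa_1$ with $\epsilon\in\{\pm 1\}\subset\QQ$, we get $\kappa_2'=\sigma(\kappa_2)=\sigma(\epsilon\kappa_1)=\epsilon\sigma(\kappa_1)=\epsilon\kappa_1'$, contradicting $\M^{\kappa_1',\kappa_2'}_{\gamma'}\in\Sigma_2^{-\epsilon}$.

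The only subtlety I would have to address is the ambiguity coming from the combinatorial automorphisms of $\C^2_{\alpha,\beta}$: the parameters $(\kappa_1,\kappa_2)$ attached to a given realization are only well-defined after a labelling of the lines, and a stabilizing automorphism of the combinatorics could in principle permute the $\alpha$-family with the $\beta$-family and thereby exchange $\kappa_1\leftrightarrow \kappa_2$. But swapping $\kappa_1$ with $\kappa_2$ fixes the sign $\epsilon=\kappa_2/\kappa_1=\kappa_1/\kappa_2$ (since $\epsilon=\pm 1$), so the invariant $\epsilon$ descends to the unordered moduli space. This is the main technical point to verify carefully, after which the Galois-invariance argument above closes the proof.
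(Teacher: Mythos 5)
Your overall strategy is the right one and is essentially the argument the paper intends (the corollary is stated with no explicit proof, as an immediate consequence of Theorem~\ref{thm:moduli}): the two branches of the moduli space are cut out by the rational relations $\kappa_1-\kappa_2=0$ and $\kappa_1+\kappa_2=0$, a Galois automorphism $\sigma$ sends a normalized realization $\M^{\kappa_1,\kappa_2}_\gamma$ to $\M^{\sigma(\kappa_1),\sigma(\kappa_2)}_{\sigma(\gamma)}$, and since $\epsilon=\kappa_2/\kappa_1\in\{\pm1\}\subset\QQ$ it cannot change the branch. Your identification $\kappa_1=2\alpha$, $\kappa_2=2\beta$, hence $\epsilon=\alpha\beta$, is correct, and your treatment of the labelling ambiguity (the combinatorial swap $V_1\leftrightarrow V_2$ exchanges $\kappa_1$ and $\kappa_2$ but fixes $\epsilon$ because $\epsilon=\epsilon^{-1}$) is exactly the point that needed checking; note also that $S_1$ is combinatorially pinned down as the unique surrounding point lying on both quintuple alignments, so the normalization itself is canonical up to that swap.

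There is, however, one step whose justification does not close as written. To reach your contradiction you place the two members of a putative arithmetic Zariski pair in \emph{different} components $\Sigma_2^{\epsilon}$ and $\Sigma_2^{-\epsilon}$, justifying this by saying that Theorem~\ref{thm:ZP13-2pts5} makes $\epsilon$ a topological invariant. That theorem only gives ``different $\epsilon$ $\Rightarrow$ different topology'' for the four reference arrangements; what your reduction actually requires is the converse implication ``same $\epsilon$ $\Rightarrow$ same topology,'' i.e.\ that the topological type is constant on each branch $\{\kappa_2=\pm\kappa_1\}$. This is true, but it comes from a different source: each branch is a copy of $\CC$ punctured at finitely many points (this is the content of the subsequent proposition on the connected components of $\Sigma_2$), hence path-connected, and a path inside a connected component of the moduli space is a lattice-isotopy, so Randell's isotopy theorem yields homeomorphic pairs. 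Once this connectedness-plus-isotopy ingredient is inserted, the argument closes: Galois conjugation preserves the component, the topology is constant on each component, so two Galois-conjugate realizations are homeomorphic and can never form a Zariski pair.
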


Even if there is no Galois-conjugated realization, we observe a Rybnikov-like construction. Using Theorem~\ref{thm:moduli} for $\kappa_1^2=\kappa_2^2=2$, we can show that there exist $(3,2)$-configurations having the same combinatorics than $\A_2^{\alpha,\beta}$ and defined over the number field $\QQ(\sqrt{2})$. Indeed, taking $\gamma=2$, we obtain the configuration given by
\[
	V_1=(0:1:0),\quad V_2=(1:0:0),\quad V_3=(0:0:1),
\]
\[
	S_1=(1:1:1),\quad S_2=(2:2:1),\quad S_3=(1:2:1),\quad S_4=(2:1:1),
\]
\[
	S_5^\alpha=(1:2:\alpha\sqrt{2}),\quad S_6^\alpha=(1:\alpha\sqrt{2}:1),\quad S_7^\alpha=(\alpha\sqrt{2}:2:2),
\]
\[
	S_8^\beta=(2:1:\beta\sqrt{2}),\quad S_9^\beta=(\beta\sqrt{2}:1:1),\quad S_{10}^\beta=(2:\beta\sqrt{2}:2).
\]

In~\cite{Rybnikov}, Rybnikov constructed the first known example of Zariski pair by gluing in two different ways two MacLane arrangements, which are complex conjugated but not complexified real arrangements. It is worth noticing that $\A_2^{\alpha,\beta}$ are of the same \emph{semi-arithmetic} nature.

Consider $\A^{\alpha}$ and $\B^{\beta}$ the complexified dual arrangements of the points sets $\V\sqcup\S\sqcup\S_\alpha$ and $\V\sqcup\S\sqcup\S_\beta$ respectively. Observe that $s(\A^\alpha)=\B^\alpha$, where $s:\PC^2\rightarrow\PC^2$ is the symmetry with respect to the line $x-y=0$. Then, we obtain that $\A_2^{\alpha,\beta}$ is the a particular gluing of two copy of the $\A^\alpha$ arrangement. More precisely, if $\sigma$ is the non-trivial Galois automorphism of $\QQ(\sqrt{2})$, we have
\begin{equation*}
	\A_2^{\alpha,\beta}=\varphi_\alpha(\A^{1})\cup \psi_\beta(\A^{1}),
\end{equation*}
where $\varphi_\alpha=\sigma^{(1+\alpha)/2}$ and $\psi_\beta=s \circ \sigma^{(1+\beta)/2}$. Thereby, $\sigma$ mimics the complex conjugation of Rybnikov's example, and $s$ the gluing.

\begin{rmk}\mbox{}
	\begin{enumerate}
		\item The combinatorics of $\A^{\alpha,\beta}$ and $\A_1^{\alpha,\beta}$ also admit a similar type of realizations in $\QQ(\sqrt{2})$ and Rybnikov-like constructions.
		\item The construction above is similar to the one given by the first author in~\cite{Guerville:multiplicativity}.
	\end{enumerate}
\end{rmk}

\subsection{Topology of the moduli space}\mbox{}

From Theorem~\ref{thm:moduli}, we can obtain additional results on the topology of the moduli space. In the first place, we compute the number of connected components of the space.

\begin{propo}
	The moduli space $\Sigma_2$ of $\A_2^{\alpha,\beta}$ is formed by two connected components $\Sigma_2^{0}$ and $\Sigma_2^{1}$. The first is characterized by the relation $\kappa_1=\kappa_2$, and the second by $\kappa_1=-\kappa_2$.
\end{propo}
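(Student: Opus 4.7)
The plan is to exploit directly the parametrization obtained in Theorem~\ref{thm:moduli}. After quotienting by $\PGL_3(\CC)$ by fixing $V_1,V_2,V_3,S_1$ as in the proof of that theorem, the moduli space $\Sigma_2$ is identified with the set of triples $(\gamma,\kappa_1,\kappa_2)\in(\CC^*)^3$ satisfying $\kappa_1^2=\kappa_2^2=\gamma$ together with the finite list of polynomial non-vanishing conditions listed in items~(1) and (2) of the theorem. The heart of the proof will be to observe that this constraint set naturally breaks into two pieces and that each piece is Euclidean-path-connected.

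First, since $\kappa_1,\kappa_2\in\CC^*$, the relation $\kappa_1^2=\kappa_2^2$ is equivalent to $\kappa_1=\varepsilon\kappa_2$ for a uniquely determined sign $\varepsilon\in\{+1,-1\}$. This partitions $\Sigma_2$ into two subsets $\Sigma_2^{0}$ (corresponding to $\varepsilon=+1$) and $\Sigma_2^{1}$ (corresponding to $\varepsilon=-1$). They are disjoint, for a common element would satisfy $\kappa_1=-\kappa_1$, forcing the excluded value $\kappa_1=0$.

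Second, I will show that each of these subsets is connected. On $\Sigma_2^{\varepsilon}$ a single variable suffices as coordinate: take $\kappa_1\in\CC^*$ and set $\kappa_2=\varepsilon\kappa_1$, $\gamma=\kappa_1^2$. The non-degeneracy conditions of Theorem~\ref{thm:moduli} then become a finite list of polynomial inequations in the single variable $\kappa_1$, each of which excludes only finitely many values. Hence $\Sigma_2^{\varepsilon}$ is homeomorphic to $\CC^*\setminus F_\varepsilon$ for a finite set $F_\varepsilon$. Since $\CC^*$ is a connected real surface, removing finitely many points keeps it path-connected, so each $\Sigma_2^{\varepsilon}$ is connected. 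Combined with the previous paragraph, this gives exactly two connected components.

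Finally, to match the labelling of the proposition, I will read off from Section~\ref{sec:deg} that the configuration $\C_{\alpha,\beta}^2$ corresponds to the specialization $\gamma=4$, $\kappa_1=2\alpha$, $\kappa_2=2\beta$ (directly from the coordinates of $S_6^\alpha$ and $S_9^\beta$). Hence $\Sigma_2^{0}$ (where $\kappa_1=\kappa_2$) contains $\A_2^{1,1}$ and $\A_2^{-1,-1}$, whereas $\Sigma_2^{1}$ (where $\kappa_1=-\kappa_2$) contains $\A_2^{1,-1}$ and $\A_2^{-1,1}$, which is consistent with the chamber-weight computations of Theorem~\ref{thm:ZP13-2pts5}. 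The only potentially delicate point is the choice of topology on $\Sigma_2$: one must work with the Euclidean (not the Zariski) topology in order for the statement about connected components to carry its usual meaning, but in that topology the argument above is immediate.
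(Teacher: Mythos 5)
Your proof is correct and follows essentially the same route as the paper: both reduce $\Sigma_2$ to the locus $\kappa_1^2=\kappa_2^2$ in $(\CC^*)^2$, split it along the two lines $\kappa_1=\pm\kappa_2$, and observe that each piece is a copy of $\CC$ (or $\CC^*$) punctured at finitely many points, hence connected. Your additional remarks on disjointness, on the Euclidean topology, and on locating the four arrangements $\A_2^{\alpha,\beta}$ via $\kappa_1=2\alpha$, $\kappa_2=2\beta$ are consistent with the paper and fill in details it leaves implicit.
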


\begin{proof}
	We denote by $\R$ the subspace of $\CC^2$ defined by
	\begin{equation*}
		(\kappa_1,\kappa_2) \in \R \Longleftrightarrow \M^{\kappa_1,\kappa_2}_\gamma \in \Sigma_2. 
	\end{equation*}
	Since $\gamma$ is determined by the values of $\kappa_1$ and $\kappa_2$ then $\R\simeq\Sigma_2$. Using the first conditions described in Theorem~\ref{thm:moduli}, we get $0=\kappa_1^2-\kappa_2^2=(\kappa_1-\kappa_2)(\kappa_1+\kappa_2)$ and $\kappa_1,\kappa_2\neq 0$. In particular,
	\[
		\R\subset (\L_1\cup\L_2)\setminus 0,
	\]
	where $\L_1: \kappa_1-\kappa_2=0$ and $\L_2: \kappa_1+\kappa_2=0$ are two lines in $\CC^2$ intersecting at the origin. Since all the conditions defining $\R$ are algebraic, it has at most two connected components. Indeed, applying the inequality conditions in Theorem~\ref{thm:moduli} at each component contained respectively in $\L_1$ and $\L_2$, we obtain that $\R$ is the disjoint union of two affine spaces $\CC_p$ and $\CC_q$, which are isomorphic to $\CC$ punctured with a finite collection of points.
\end{proof}

The previous result implies that $\A_2^{1,1}$ and $\A_2^{-1,-1}$ (resp.  $\A_2^{-1,1}$ and $\A_2^{1,-1}$) have the same topology. In particular, the dual arrangements which are in $\Sigma_2^0$ correspond to those for which $\tau(\C_{\alpha,\beta}^2)=0$, and they belong to $\Sigma_2^1$ if $\tau(\C_{\alpha,\beta}^2)=1$. We can also remark that these arrangements are path-connected in each component as complex arrangements but not as complexified real arrangements. In general, for any $(3,2)$-configuration $\C$ with the same combinatorics of $\C_{\alpha,\beta}^2$, we have
\[
	\tau(\C)=0\Longleftrightarrow \A^\C\in\Sigma_2^0.
\]

In addition, these two connected components are also geometrically characterized as follows. As in the original Zariski example of two sextics~\cite{zariski}, the Zariski pair formed by $\A_2^{1,1}$ can be distinguished by a conic in two different ways (see Figure~\ref{fig:ZP13_2pts5}). Denote by $P_{i_1,\ldots,i_k}$ the singular point defined by $L_{i_1}, \ldots, L_{i_k}\in\M^{\kappa_1,\kappa_2}_\gamma$.

\begin{propo}\label{propo:geom_char}
	For any $\M^{\kappa_1,\kappa_2}_\gamma\in\Sigma_2$, the following are equivalent:
	\begin{enumerate}
		\item $\M^{\kappa_1,\kappa_2}_\gamma\in\Sigma_2^{0}$.
		\item The six lines $L_6, L_7, L_8, L_{10}, L_{11}, L_{13}$ are tangent to a smooth conic.
		\item The six triple points $P_{1,8,10}$, $P_{1,11,12}$, $P_{2,8,9}$, $P_{2,11,12}$, $P_{3,9,10}$ and $P_{3,12,13}$ are contained in a smooth conic.
		\item The three triple points $P_{1,11,12}$, $P_{2,8,9}$ and $P_{3,4,5}$ are aligned.
	\end{enumerate}
\end{propo}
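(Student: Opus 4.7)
The plan is to reduce each equivalence to an algebraic identity in the parameters $(\kappa_1,\kappa_2)$ from Theorem~\ref{thm:moduli}, exploiting that $\kappa_1^2=\kappa_2^2=\gamma$ forces $\kappa_2=\pm\kappa_1$ with $\Sigma_2^0=\{\kappa_2=\kappa_1\}$ and $\Sigma_2^1=\{\kappa_2=-\kappa_1\}$. Each of (2), (3), (4) is a Zariski-closed condition on the one-parameter family $\Sigma_2$ and each component is irreducible, so it suffices to show the corresponding defining polynomial has $(\kappa_1-\kappa_2)$ as an explicit factor whose complement does not vanish on $\Sigma_2^1$.

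First I would handle the alignment (1)$\Leftrightarrow$(4), which is immediate. Intersecting the defining equations yields
\[
P_{1,11,12}=(1:0:-\kappa_2),\quad P_{2,8,9}=(0:1:-\kappa_1),\quad P_{3,4,5}=(1:-1:0),
\]
whose $3\times 3$ collinearity determinant is exactly $\kappa_2-\kappa_1$.

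Next, for (1)$\Leftrightarrow$(3), I would compute the six triple points:
\begin{align*}
P_{1,8,10}&=(\kappa_1:0:-1), & P_{1,11,12}&=(1:0:-\kappa_2), \\
P_{2,8,9}&=(0:1:-\kappa_1), & P_{2,11,13}&=(0:\kappa_2:-1), \\
P_{3,9,10}&=(\kappa_1:-1:0), & P_{3,12,13}&=(1:-\kappa_2:0).
\end{align*}
The crucial structural observation is that these split into three pairs, one on each coordinate axis $L_1\!:\!y=0$, $L_2\!:\!x=0$, $L_3\!:\!z=0$. Substituting a general conic $ax^2+by^2+cz^2+dxy+exz+fyz=0$ into each axis yields a quadratic whose roots must coincide with the two triple points there; the product-of-roots identities on $L_1$ and $L_2$ give $c=a\kappa_1\kappa_2^{-1}=b\kappa_1^{-1}\kappa_2$, forcing $a=b$ via $\kappa_1^2=\kappa_2^2$, while the sum-of-roots identities determine $e$ and $f$. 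The $L_3$ condition then requires the product of its roots $\kappa_1\kappa_2^{-1}$ to equal $b/a=1$, i.e.\ $\kappa_1=\kappa_2$. Setting $\kappa=\kappa_1=\kappa_2$, the resulting conic $x^2+y^2+z^2+(\kappa+\kappa^{-1})(xy+xz+yz)=0$ has determinant $(1+\kappa+\kappa^{-1})(1-(\kappa+\kappa^{-1})/2)^2$, nonzero under the inequalities $\kappa\neq1$ and $\kappa^3\neq1$ of Theorem~\ref{thm:moduli}, so the conic is smooth.

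Finally, for (1)$\Leftrightarrow$(2), I would invoke projective duality: six lines in $\PC^2$ are tangent to a smooth conic if and only if their dual points in $\PCc^2$ lie on a smooth conic. The dual points
\[
(1\!:\!\gamma\!:\!1),\ (\gamma\!:\!1\!:\!1),\ (\kappa_1^{-1}\!:\!\kappa_1\!:\!1),\ (\kappa_1^{-1}\!:\!1\!:\!1),\ (\kappa_2\!:\!\kappa_2^{-1}\!:\!1),\ (1\!:\!\kappa_2^{-1}\!:\!1)
\]
are tested for concyclicity by the vanishing of the associated $6\times 6$ Veronese determinant; after simplification using $\gamma=\kappa_1^2=\kappa_2^2$, this determinant should factor as $(\kappa_1-\kappa_2)R(\kappa_1,\kappa_2)$. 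The main obstacle will be checking that $R$ does not vanish on $\Sigma_2^1$. This may be done either by a single numerical evaluation at a point $(\kappa,-\kappa)\in\Sigma_2^1$, or structurally by adapting the axis-pairing method of (3) to the dual plane --- here the pairs live on the vertical lines $a=1$ and $a=\kappa_1^{-1}$ and on the horizontal lines $b=1$ and $b=\kappa_2^{-1}$ --- to produce the explicit tangent conic on $\Sigma_2^0$, concluding via the uniqueness of the conic tangent to five given lines in general position.
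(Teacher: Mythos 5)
Your treatment of the equivalences $(1)\Leftrightarrow(4)$ and $(1)\Leftrightarrow(3)$ is correct and complete. For $(4)$ you reproduce exactly the paper's computation (the three points and the determinant $\kappa_2-\kappa_1$). For $(3)$ you take a genuinely different and arguably cleaner route: the paper simply repeats the $6\times6$ Veronese determinant computation it uses for $(2)$ (``using analogous arguments''), whereas you exploit the fact that the six triple points come in pairs on the coordinate lines $L_1,L_2,L_3$ and read off the conic's coefficients from product-of-roots identities, arriving at the obstruction $\kappa_1/\kappa_2=b/a=1$ on $L_3$ and the explicit smooth conic $x^2+y^2+z^2+(\kappa+\kappa^{-1})(xy+xz+yz)=0$ when $\kappa_1=\kappa_2$. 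This is a complete argument (a smooth conic contains no line, so the restrictions are honest binary quadratics with the two prescribed roots, distinct because $\kappa_1\kappa_2\neq1$), and you have also implicitly corrected the statement's mislabelled point: $L_2,L_{11},L_{12}$ are not concurrent, and the intended triple point is $P_{2,11,13}=(0:\kappa_2:-1)$, as your list shows.

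The gap is in $(1)\Leftrightarrow(2)$. The duality reduction and the $6\times6$ determinant are exactly the paper's strategy, but the decisive step --- showing that the cofactor $R$ does not vanish at \emph{any} admissible point of $\Sigma_2^1$ --- is not carried out, and your first proposed shortcut, ``a single numerical evaluation at a point of $\Sigma_2^1$'', cannot establish it: it only shows $R$ is not identically zero on that component, while the proposition asserts pointwise non-vanishing. This is not a pedantic worry. The explicit value of the determinant on $\kappa_2=-\kappa_1$ is $\pm 2\kappa_1^{-5}(\kappa_1^2+1)(\kappa_1^2-\kappa_1+1)(\kappa_1^2+\kappa_1+1)(\kappa_1-1)^4(\kappa_1+1)^4$, which has eight complex roots; the proof must check each one against the exclusions of Theorem~\ref{thm:moduli} (for instance $\kappa_1=\pm i$ is ruled out by $\kappa_1\kappa_2\neq1$, and the roots of $\kappa_1^2-\kappa_1+1$ by $\kappa_2^3\neq1$), which no finite sample of generic values can see. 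Your second proposed route --- transplanting the axis-pairing argument to the dual plane --- does not transfer verbatim either: the six dual points do not split into three pairs on three lines but sit on \emph{four} lines ($x=1$, $x=\kappa_1^{-1}$, $y=1$, $y=\kappa_2^{-1}$), each containing two of them, with $L_{10}^*$ and $L_{13}^*$ each lying on two of these lines; a workable argument along these lines would need to be written out rather than asserted by analogy. As it stands, the proof of $(1)\Leftrightarrow(2)$ is a plan, not a proof.
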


\begin{rmk}\label{ref:conic_dual}
The second statement in Proposition~\ref{propo:geom_char} can be rephrased in terms of dual configurations. That is, the six points $L_6^*, L_7^*, L_8^*, L_{10}^*, L_{11}^*$ and $L_{13}^*$ belongs to a smooth conic if and only if $\M^{\kappa_1,\kappa_2}_\gamma\in\Sigma_2^{0}$.
\end{rmk}

\begin{proof}
	We prove separately each of the geometrical characterizations of the arrangements in $\Sigma_2^{0}$.
	
	For the first one, in order to simplify the proof, we consider the dual point of view as in Remark~\ref{ref:conic_dual}, and then prove that $L_6^*, L_7^*, L_8^*, L_{10}^*, L_{11}^*$ and $L_{13}^*$ are on the same conic for any arrangement $\M^{\kappa_1,\kappa_2}_\gamma$ in one connected component and not in the other one. The conic defined by the five points $L_6^*, L_7^*, L_8^*, L_{10}^*, L_{11}^*$ is given by
	\begin{equation*}
		\det\left(
			\begin{matrix}
				X^2 & Y^2 & Z^2 & XY & XZ & YZ \\			
				1 & \gamma^2 & 1 & \gamma & 1 & \gamma \\				
				\gamma^2 & 1 & 1 & \gamma & \gamma & 1 \\		
				\kappa_1^{-2} & \kappa_1^2 & 1 & 1 & \kappa_1^{-1} & \kappa_1 \\
				\kappa_1^{-2}	& 1 & 1 & \kappa_1^{-1} & \kappa_1^{-1} & 1 \\
				\kappa_2^2 & \kappa_2^{-2} & 1 & 1 & \kappa_2 & \kappa_2^{-1}						
			\end{matrix}
		\right) = 0.
	\end{equation*}
	Substituting $X$, $Y$ and $Z$ by the coordinates of $L_{13}^*$, we obtain that if $\kappa_1=\kappa_2$ then this determinant vanishes, while if $\kappa_1=-\kappa_2$ then it takes the value
	\begin{equation*}
		\pm 2\kappa_1^{-5}(\kappa_1^2+1)(\kappa_1^2-\kappa_1+1)(\kappa_1^2+\kappa_1+1)(\kappa_1-1)^4(\kappa_1+1)^4,
	\end{equation*}
	which is non-zero if $\M^{\kappa_1,\kappa_2}_\gamma\in\Sigma_2^{0}$.
	
	Using analogous arguments over the six triple points $P_{1,8,10}$, $P_{1,11,12}$, $P_{2,8,9}$, $P_{2,11,12}$, $P_{3,9,10}$ and $P_{3,12,13}$, we obtain the second geometrical characterization.
	
	Finally, computing the coordinates of the three triple points $P_{1,11,12}$, $P_{2,8,9}$ and $P_{3,4,5}$, we obtain
	\[
	P_{1,11,12}=(1:0:-\kappa_2),\quad P_{2,8,9}=(0:1:-\kappa_1),\quad P_{3,4,5}=(1:-1:0).
	\]
	These points are collinear if and only if
	\[
	\det\begin{pmatrix}
	1 & 0 & 1\\
	0 & 1 & -1\\
	-\kappa_2 & -\kappa_1 & 0
	\end{pmatrix}
	=\kappa_1-\kappa_2=0.
	\]
	That is if and only if $\kappa_1=\kappa_2$.
\end{proof}

\begin{rmk}\mbox{}
  \begin{enumerate}
    \item The previous geometrical properties were already observed for the configuration $\C_{\alpha,\beta}^2$ in Remark~\ref{rmk:deg_geom_prop}. Proposition~\ref{propo:geom_char} extends this characterization to the entire connected components.
    
    \item It can be checked that there exist more smooth conics tangent to six lines (resp. containing six points) characterizing the connected components of $\Sigma_2$.
   \end{enumerate}
 \end{rmk}

\begin{figure}[h!]
	\begin{tikzpicture}
  \begin{scope}[scale=0.6,shift={(-6,0)}]
    \def\colorV{black}
    \def\colorS{black}
    \def\colorSa{black}
    \def\colorSb{black}
    
    \tikzset{
        myConic1/.style = {color=cyan, dashed, smooth, samples=200, line width=1.5}
    }
    
    \draw[domain=-1/3:2][myConic1] plot[variable=\x] ({\x}, { (50 - 11*\x + 2*sqrt(190)*sqrt(2 + 5*\x - 3*\x*\x))/49 });
    \draw[domain=-1/3:2][myConic1] plot[variable=\x] ({\x}, { (50 - 11*\x - 2*sqrt(190)*sqrt(2 + 5*\x - 3*\x*\x))/49 });
    
    \tikzset{
        myConic2/.style = {color=orange, dashed, smooth, samples=200, line width=1.5}
    }
    \draw[domain=-1.43:2.098][myConic2] plot[variable=\x] ({\x}, { (1 - \x - sqrt(9 + 2*\x - 3*\x*\x))/2 });
    \draw[domain=-1.43:2.098][myConic2] plot[variable=\x] ({\x}, { (1 - \x + sqrt(9 + 2*\x - 3*\x*\x))/2 });
    
    
    \coordinate (P1) at (0, -3);
    \coordinate (Q1) at (0, 13/2);
    \coordinate (P2) at (-3, 0);
    \coordinate (Q2) at (13/2, 0);
    \coordinate (P3) at (-3, 4);
    \coordinate (Q3) at (4, -3);
    \coordinate (P4) at (-3, 8/3);
    \coordinate (Q4) at (8/3, -3);
    \coordinate (P5) at (-1/3, -3);
    \coordinate (Q5) at (-1/3, 13/2);
    \coordinate (P6) at (-3, -1/3);
    \coordinate (Q6) at (13/2, -1/3);
    \coordinate (P7) at (-5/2, -3);
    \coordinate (Q7) at (9/4, 13/2);
    \coordinate (P8) at (-1, -3);
    \coordinate (Q8) at (-1, 13/2);
    \coordinate (P9) at (-3, 2);
    \coordinate (Q9) at (13/2, 2);
    \coordinate (P10) at (-3, -5/2);
    \coordinate (Q10) at (13/2, 9/4);
    \coordinate (P11) at (-3, -1);
    \coordinate (Q11) at (13/2, -1);
    \coordinate (P12) at (2, -3);
    \coordinate (Q12) at (2, 13/2);
    \draw[color=\colorV, line width=1] (P1)--(Q1) node[pos=-.05, xshift=.2cm] {$L_1$};
    \draw[color=\colorV, line width=1] (P2)--(Q2) node[pos=-.05, yshift=.15cm] {$L_2$};
    \draw[color=\colorV, line width=1] (P3)--(Q3) node[pos=-.05] {$L_3$};
    \draw[color=\colorS, line width=1] (P4)--(Q4) node[pos=-.075] {$L_5$};
    \draw[color=\colorS, line width=1] (P5)--(Q5) node[pos=-.05, xshift=-.05cm] {$L_6$};
    \draw[color=\colorS, line width=1] (P6)--(Q6) node[pos=-.05, yshift=-.05cm] {$L_7$};
    \draw[color=\colorSa, line width=1] (P7)--(Q7) node[pos=-.05] {$L_8$};
    \draw[color=\colorSa, line width=1] (P8)--(Q8) node[pos=-.05, xshift=-.1cm] {$L_9$};
    \draw[color=\colorSa, line width=1] (P9)--(Q9) node[pos=-.05]
    {$L_{10}$};
    \draw[color=\colorSb, line width=1] (P10)--(Q10) node[pos=-.05]
    {$L_{11}$};
    \draw[color=\colorSb, line width=1] (P11)--(Q11) node[pos=-.05, yshift=-.05cm]
    {$L_{12}$};
    \draw[color=\colorSb, line width=1] (P12)--(Q12) node[pos=-.05]
    {$L_{13}$};
    
    \node at (1,-5) {(A) A representative of $\Sigma_2^{0}$ and the conics.}; 
    
  \end{scope}

  \begin{scope}[scale=0.95,shift={(6,0)}]
  	\def\colorV{black}
    \def\colorS{black}
    \def\colorSa{black}
    \def\colorSb{black}
    
    \coordinate (P1) at (0, -2);
    \coordinate (Q1) at (0, 4);
    \coordinate (P2) at (-4, 0);
    \coordinate (Q2) at (2, 0);
    \coordinate (P3) at (2, -1);
    \coordinate (Q3) at (-3, 4);
    \coordinate (P4) at (-4, 11/3);
    \coordinate (Q4) at (5/3, -2);
    \coordinate (P5) at (-1/3, -2);
    \coordinate (Q5) at (-1/3, 4);
    \coordinate (P6) at (-4, -1/3);
    \coordinate (Q6) at (2, -1/3);
    \coordinate (P7) at (-2, -2);
    \coordinate (Q7) at (1, 4);
    \coordinate (P8) at (-1, -2);
    \coordinate (Q8) at (-1, 4);
    \coordinate (P9) at (-4, 2);
    \coordinate (Q9) at (2, 2);
    \coordinate (P10) at (-4, 7/3);
    \coordinate (Q10) at (2, -2/3);
    \coordinate (P11) at (-4, 1/3);
    \coordinate (Q11) at (2, 1/3);
    \coordinate (P12) at (2/3, -2);
    \coordinate (Q12) at (2/3, 4);
    \draw[color=\colorV, line width=1] (P1)--(Q1) node[pos=-.05, xshift=.15cm] {$L_1$};
    \draw[color=\colorV, line width=1] (P2)--(Q2) node[pos=-.05] {$L_2$};
    \draw[color=\colorV, line width=1] (P3)--(Q3) node[pos=-.05] {$L_3$};
    \draw[color=\colorS, line width=1] (P4)--(Q4) node[pos=-.05] {$L_5$};
    \draw[color=\colorS, line width=1] (P5)--(Q5) node[pos=-.05, xshift=-0.05cm] {$L_6$};
    \draw[color=\colorS, line width=1] (P6)--(Q6) node[pos=-.05, yshift=-.05cm] {$L_7$};
    \draw[color=\colorSa, line width=1] (P7)--(Q7) node[pos=-.05] {$L_8$};
    \draw[color=\colorSa, line width=1] (P8)--(Q8) node[pos=-.05] {$L_9$};
    \draw[color=\colorSa, line width=1] (P9)--(Q9) node[pos=-.05]
    {$L_{10}$};
    \draw[color=\colorSb, line width=1] (P10)--(Q10) node[pos=-.05]
    {$L_{11}$};
    \draw[color=\colorSb, line width=1] (P11)--(Q11) node[pos=-.05, yshift=.05cm]
    {$L_{12}$};
    \draw[color=\colorSb, line width=1] (P12)--(Q12) node[pos=-.05]
    {$L_{13}$};
    
    \node at (-1,-3.15) {(B) A representative of $\Sigma_2^{1}$.};
    
  \end{scope}
	\end{tikzpicture}
	\caption{Real representatives of the two connected components of $\Sigma_2$, considering $L_4$ as the line at the infinity.}\label{fig:ZP13_2pts5}
\end{figure}
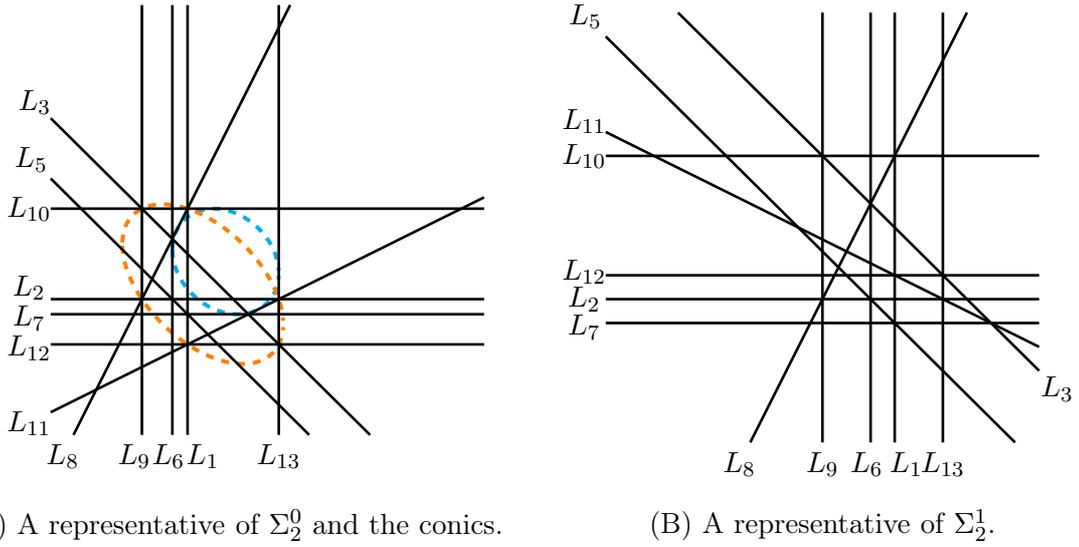

%
%
%

\bigskip
\section{Other examples of Zariski pairs}\label{sec:list}
Applying a similar construction and arguments as in Sections~\ref{sec:ZP} and~\ref{sec:deg}, we construct examples and degenerations of Zariski pairs with 15 and 17 lines using $(3,2)$-configurations. Any of these examples admits realizations by using equations with rational coefficients. However, in order to present coherent equations between the initial new pairs and their degenerations (together with simpler equations), the following examples are given over the number field $\QQ(\sqrt{2})$.

\subsection{Examples with 15 lines}\label{subsec:15lines}\mbox{}

Consider the planar $(3,2)$-configurations $\D_{\alpha,\beta}=(\V,\S_{\alpha,\beta},\L_{\alpha,\beta},\pl)$, for $\alpha,\beta\in\{-1,1\}$, defined by $\V=\set{V_1,V_2,V_3}$ with
\begin{equation*}
	\begin{array}{l p{0.5cm} l p{0.5cm} l}
		V_1=(0:0:1), && V_2=(1:0:0), && V_3=(0:1:0),
	\end{array}
\end{equation*}
and $\S_{\alpha,\beta}=\set{S_1,S_2,S_3,S_4,S_5,S_6}\sqcup\set{S_7^\alpha,S_8^\alpha,S_9^\alpha}\sqcup\set{S_{10}^\beta,S_{11}^\beta,S_{12}^\beta}$ where
\begin{equation*}
	\begin{array}{l p{0.5cm} l p{0.5cm} l}
		S_1=(1:1:1), && S_2=(4:1:1), && S_3=(4:1:4), \\
		S_4=(2:2:1), && S_5=(4:-2:1), && S_6=(-1:1:4),\\
		S_7^\alpha=(4:-2:\alpha\sqrt{2}), && S_8^\alpha=(2\alpha\sqrt{2}:-2:1), && S_9^\alpha=(2:-\alpha\sqrt{2}:1),\\
		S_{10}^\beta(-1:1:\beta\sqrt{2}), &&	S_{11}^\beta(-1:2\beta\sqrt{2}:4), &&	S_{12}^\beta(-\beta\sqrt{2}:4:2).
	\end{array}
\end{equation*}

The configurations $\D_{1,1}$ and $\D_{1,-1}$ are as in Figure~\ref{fig:dual_ZP15}. Ordered from left to right and from top to bottom, the surrounding points are given by
\begin{itemize}
	\item $S_{12},\ S_4,\ S_1,\ S_2,\ S_{10},\ S_{11},\ S_6,\ S_3,\ S_9,\ S_7,\ S_8,\ S_5 $, for $\D_{1,1}$, (see~Figure~\ref{fig:dual_ZP15}--(A));
	\item $S_{12},\ S_4,\ S_1,\ S_2,\ S_6,\ S_3,\ S_{11},\ S_{10},\ S_9,\ S_7,\ S_8,\ S_5$, for $\D_{1,-1}$, (see~Figure~\ref{fig:dual_ZP15}--(B)).
\end{itemize}

\pagebreak
\begin{thm}\label{thm:ZP15}
	Let $\M^{\alpha,\beta}$ be the dual arrangement of the configuration $\D_{\alpha,\beta}$. For any $\alpha,\beta\in\set{-1,1}$, we have:
	\begin{enumerate}
		\item The combinatorics of the configuration $\D_{\alpha,\beta}$ is given by
			\begin{equation*}			
				\begin{array}{l}
				\Big\{\ 
					\big\{ V_1, S_1, S_4\big\},\ \big\{ V_1, S_2, S_3\big\},\ \big\{V_1, S_5,S_7^\alpha\big\},\ \big\{V_1,S_6, S_{10}^\beta\big\},\ \big\{V_1, S_8^\alpha, S_9^\beta\big\},\ \big\{V_1, S_{11}^\beta, S_{12}^\beta\big\},   \\[0.5em]
					\big\{V_2, S_1, S_2\big\},\ \big\{V_2, S_3, S_6\big\},\ \big\{V_2, S_4, S_{12}^\beta\big\},\ \big\{V_2, S_5, S_8^\alpha\big\},\ \big\{V_2, S_7^\alpha, S_9^\alpha\big\},\ \big\{V_2, S_{10}^\beta, S_{11}^\beta\big\}, \\[0.5em]
					\big\{V_3, S_1, S_3\big\},\ \big\{V_3, S_2, S_5\big\},\ \big\{V_3, S_4, S_9^\alpha\big\},\ \big\{V_3, S_6, S_{11}^\beta\big\},\ \big\{V_3, S_7^\alpha, S_8^\alpha\big\},\ \big\{V_3, S_{10}^\beta, S_{12}^\beta\big\}\ \Big\}.
				\end{array}
			\end{equation*}
			
		\item $\D_{\alpha,\beta}$ is stable.
		
		\item If $\alpha',\beta'\in\set{-1,1}$ are such that $\alpha\beta\neq\alpha'\beta'$, then there is no homeomorphism between $(\PC^2,\M^{\alpha,\beta})$ and $(\PC^2,\M^{\alpha',\beta'})$.
	\end{enumerate}
\end{thm}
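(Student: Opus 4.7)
The plan is to follow the three-step strategy of Theorem~\ref{thm:homeo_ZP13}, adapted to the new data of $\D_{\alpha,\beta}$: compute the combinatorics, establish stability, and conclude topological distinction via Corollary~\ref{cor:invariant} after evaluating the chamber weight.

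For part (1), I would verify each of the $18$ listed triples by expanding the corresponding $3\times 3$ determinant in the homogeneous coordinates; each reduces to a polynomial identity in $\sqrt{2}$ and the signs $\alpha,\beta\in\{\pm 1\}$. The absence of further alignments follows from an incidence count: each of the $12$ surrounding points lies on exactly one line through each vertex, giving $3\cdot 12=36$ vertex-surrounding incidences which are distributed over the listed $18$ lines with two surrounding points per line; it remains to check that no three surrounding points are collinear off these pencils, which is a direct verification on the coordinates.

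For part (2), the combinatorics shows that each $V_i$ belongs to $6$ triples while each surrounding point belongs to exactly $3$. Any automorphism of the combinatorics must preserve these incidence counts, so the set $\V$ is fixed setwise; hence $\Aut(\K)=\Aut^{\text{Stab}}(\K)$ and $\D_{\alpha,\beta}$ is stable.

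For part (3), I would first verify the axioms of a planar uniform stable $(3,2)$-configuration: the constant plumbing $\pl\equiv 1\in\ZZ_2$ is compatible with Definition~\ref{def:configuration} because every line of $\L_{\alpha,\beta}$ contains exactly two surrounding points; planarity is immediate from the coordinates of $\V$; uniformity is automatic for any $(3,2)$-configuration; and stability was just established. Corollary~\ref{cor:invariant} then reduces the theorem to showing $\tau(\D_{\alpha,\beta})\neq\tau(\D_{\alpha',\beta'})$ whenever $\alpha\beta\neq\alpha'\beta'$. Sending the line $(V_2,V_3)$ to infinity and taking the positive quadrant of $\RR^2$ as reference chamber, direct inspection of the coordinates (or of Figure~\ref{fig:dual_ZP15}) shows that $S_1,\dots,S_4$ always lie in this chamber, while $\S_\alpha$ contributes $0$ surrounding points when $\alpha=+1$ and $1$ when $\alpha=-1$ (via $S_9^{-1}=(2:\sqrt{2}:1)$ crossing into the quadrant), and symmetrically for $\S_\beta$. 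By Remark~\ref{rmk:parity}, the resulting parity $\tau(\D_{\alpha,\beta})\equiv\epsilon_\alpha+\epsilon_\beta\pmod 2$ with $\epsilon_{+1}=0$ and $\epsilon_{-1}=1$ depends only on the product $\alpha\beta$, equalling $0$ when $\alpha\beta=1$ and $1$ when $\alpha\beta=-1$. The only delicate step is this chamber count, which is entirely elementary once the coordinates are fixed; the rest is a routine adaptation of Section~\ref{sec:ZP}.
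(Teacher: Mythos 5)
Your proposal is correct and follows exactly the route the paper intends: the paper leaves this proof to the reader with the remark that the arguments are identical to those of Sections~\ref{sec:ZP} and~\ref{sec:deg}, and your instantiation (determinant checks for the $18$ triples, the $6$-versus-$3$ alignment count forcing $\phi(\V)=\V$, and Corollary~\ref{cor:invariant} applied to the parity of surrounding points in the positive quadrant after sending $(V_2,V_3)$, i.e.\ $z=0$, to infinity) is the same argument. Your chamber count is accurate: $S_1,\dots,S_4$ lie in the first quadrant, $\S_\alpha$ contributes $S_9^{-1}=(2:\sqrt{2}:1)$ exactly when $\alpha=-1$ and $\S_\beta$ contributes $S_{12}^{-1}=(\sqrt{2}:4:2)$ exactly when $\beta=-1$, giving $\tau(\D_{\alpha,\beta})\equiv\epsilon_\alpha+\epsilon_\beta\pmod 2$ as claimed.
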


\begin{cor}\label{cor:ZP15}
	Let $\alpha,\beta,\alpha',\beta'\in\set{-1,1}$ be such that $\alpha\beta\neq\alpha'\beta'$. Then the arrangements $\M^{\alpha,\beta}$ and $\M^{\alpha',\beta'}$ form a Zariski pair.
\end{cor}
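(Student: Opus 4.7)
The plan is to derive the corollary directly from Theorem~\ref{thm:ZP15}, by combining its parts (1) and (3) with the duality principle of Proposition~\ref{propo:equivalence}, mirroring the argument already used in Section~\ref{sec:ZP} for the 13-line pair.

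First, I would observe that the statement of Theorem~\ref{thm:ZP15}(1) presents the collinearity data of $\D_{\alpha,\beta}$ purely in terms of the labels $V_i$, $S_j$, $S_k^\alpha$, $S_l^\beta$, without any reference to the specific signs $\alpha$ or $\beta$. Thus all four configurations $\D_{\alpha,\beta}$, $\D_{\alpha,-\beta}$, $\D_{-\alpha,\beta}$ and $\D_{-\alpha,-\beta}$ share a single common combinatorics $\K$. Invoking Proposition~\ref{propo:equivalence}, each dual arrangement $\M^{\alpha,\beta}=\A^{\D_{\alpha,\beta}}$ realizes precisely the combinatorics $\K$ (up to the bijection $P\mapsto P^\bullet$), so $\M^{\alpha,\beta}$ and $\M^{\alpha',\beta'}$ have the same combinatorics for every choice of signs.

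Next, when $\alpha\beta\neq\alpha'\beta'$, Theorem~\ref{thm:ZP15}(3) asserts that the pairs $(\PC^2,\M^{\alpha,\beta})$ and $(\PC^2,\M^{\alpha',\beta'})$ are not homeomorphic, i.e. the two arrangements have distinct topological types in the sense of Section~\ref{subsec:arrangement}. Combining the two facts with the definition of a Zariski pair recalled at the end of Section~\ref{subsec:arrangement} immediately yields the claim.

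There is no real obstacle here, since all the substantive content has been absorbed into Theorem~\ref{thm:ZP15}; the corollary is a one-line reformulation of its conclusions. The only point worth emphasizing in writing is the distinction between the combinatorics of the \emph{configuration} $\D_{\alpha,\beta}$ (collinearities among dual points) and the combinatorics of the \emph{arrangement} $\M^{\alpha,\beta}$ (concurrences among lines), which are identified via Proposition~\ref{propo:equivalence}; this is the single bookkeeping step that makes the corollary follow from the theorem.
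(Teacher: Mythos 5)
Your proposal is correct and matches the paper's (implicit) argument: the corollary is stated without proof precisely because it follows immediately from Theorem~\ref{thm:ZP15}, parts (1) and (3), together with Proposition~\ref{propo:equivalence} identifying the combinatorics of a configuration with that of its dual arrangement. Your careful remark about distinguishing collinearities in $\D_{\alpha,\beta}$ from concurrences in $\M^{\alpha,\beta}$ is exactly the bookkeeping step the paper relies on, so nothing is missing.
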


\begin{rmk}
	Following~\cite{PapadimaSuciu:doubles_triples} (as well as~\cite{Libgober:doubles_triples} and~\cite{Dimca:Monodromy}), we can conclude that the Betti numbers of the Milnor fiber as well as the eigenvalues and the characteristic polynomial of the monodromy do not differ  in the arrangements above (thus, they are combinatorially determined), since they contain only double and triple points. 
\end{rmk}

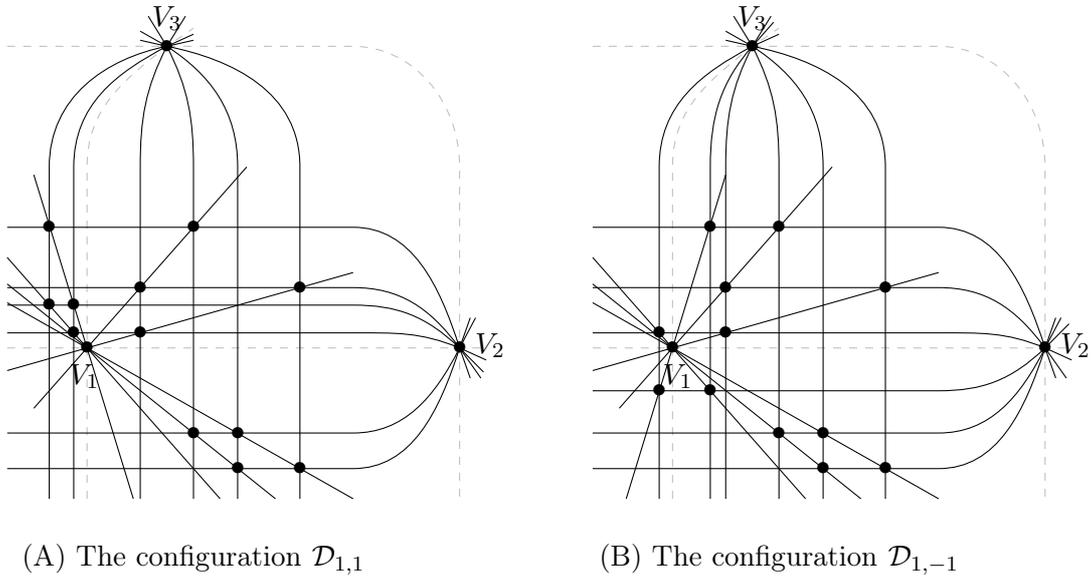
\begin{figure}[h!]
	\begin{tikzpicture}
	\begin{scope}[xscale=0.7,yscale=0.8]
		\begin{scope}[shift={(-5.5,0)}]
			\draw[dashed, color=gray!50] (0,-2.5) -- (0,3) to[out=90,in=-145] (1.5,5) -- (2,5.3);
			\draw[dashed, color=gray!50] (-1.5,0) -- (7.5,0);
			\draw[dashed, color=gray!50] (-1.5,5) -- (5,5) to[out=0,in=90] (7,3) -- (7,-2.5);
			
			\draw (1,-2.5) -- (1,3) to[out=90,in=-120] (1.5,5) -- (1.85,5.5);
			\draw (2,-2.5) -- (2,3) to[out=90,in=-60] (1.5,5) -- (1.15,5.5);
			\draw (4,-2.5) -- (4,3) to[out=90,in=-10] (1.5,5) -- (1,5.1);
			\draw (-0.25,-2.5) -- (-0.25,3) to[out=90,in=-155] (1.5,5) -- (2,5.2);	
			\draw (2.83,-2.5) -- (2.83,3) to[out=90,in=-30] (1.5,5) -- (1,5.25);	
			\draw (-0.71,-2.5) -- (-0.71,3) to[out=90,in=-170] (1.5,5) -- (2,5.1);
			
			\draw (-1.5,0.25) -- (5,0.25) to[out=0,in=160] (7,0) -- (7.5,-0.2);
			\draw (-1.5,1) -- (5,1) to[out=0,in=130] (7,0) -- (7.4,-0.5);
			\draw (-1.5,2) -- (5,2) to[out=0,in=110] (7,0) -- (7.2,-0.5);
			\draw (-1.5,-2) -- (5,-2) to[out=0,in=250] (7,0) -- (7.2,0.5);
			\draw (-1.5,-1.41) -- (5,-1.41) to[out=0,in=240] (7,0) -- (7.3,0.5);
			\draw (-1.5,0.71) -- (5,0.71) to[out=0,in=140] (7,0) -- (7.45,-0.4);
				
			
			\draw(-1,-1) -- (3,3);
			\draw(-1.5,-0.375) -- (5,1.25);
			\draw(-1.5,1.5) -- (2.5,-2.5);
			\draw(-1.5,0.75) -- (5,-2.5);
			\draw(-1.5,1.06) -- (3.54,-2.5);
			\draw(-1,2.87) -- (0.87,-2.5);
			
		
			\node at (0,0) {$\bullet$};
			\node[below] at (-0.05,-0.1) {$V_1$};
			\node at (7,0) {$\bullet$};
			\node[right] at (7.1,0.05) {$V_2$};
			\node at (1.5,5) {$\bullet$};
			\node[above] at (1.5,5.1) {$V_3$};
			
			\node at (1,1) {$\bullet$}; 
			\node at (4,1) {$\bullet$}; 
			\node at (1,0.25) {$\bullet$}; 
			\node at (2,2) {$\bullet$}; 
			\node at (4,-2) {$\bullet$}; 
			\node at (2.83,-1.41) {$\bullet$}; 
			\node at (2.83,-2) {$\bullet$}; 
			\node at (2,-1.41) {$\bullet$}; 
			
			\node at (-0.25,0.25) {$\bullet$}; 
			\node at (-0.71,0.71) {$\bullet$}; 
			\node at (-0.25,0.71) {$\bullet$}; 
			\node at (-0.71,2) {$\bullet$}; 
	\node at (2,-3.5) {(A) The configuration $\D_{1,1}$};
		\end{scope}
		
		\begin{scope}[shift={(5.5,0)}]
			\draw[dashed, color=gray!50] (0,-2.5) -- (0,3) to[out=90,in=-145] (1.5,5) -- (2,5.3);
			\draw[dashed, color=gray!50] (-1.5,0) -- (7.5,0);
			\draw[dashed, color=gray!50] (-1.5,5) -- (5,5) to[out=0,in=90] (7,3) -- (7,-2.5);
			
			\draw (1,-2.5) -- (1,3) to[out=90,in=-120] (1.5,5) -- (1.85,5.5);
			\draw (2,-2.5) -- (2,3) to[out=90,in=-60] (1.5,5) -- (1.15,5.5);
			\draw (4,-2.5) -- (4,3) to[out=90,in=-10] (1.5,5) -- (1,5.1);
			\draw (-0.25,-2.5) -- (-0.25,3) to[out=90,in=-155] (1.5,5) -- (2,5.2);	
			\draw (2.83,-2.5) -- (2.83,3) to[out=90,in=-30] (1.5,5) -- (1,5.25);	
			\draw  (0.71,-2.5) -- (0.71,3) to[out=90,in=-135] (1.5,5) -- (1.9,5.4);
			
			\draw (-1.5,0.25) -- (5,0.25) to[out=0,in=160] (7,0) -- (7.5,-0.2);
			\draw (-1.5,1) -- (5,1) to[out=0,in=130] (7,0) -- (7.4,-0.5);
			\draw (-1.5,2) -- (5,2) to[out=0,in=110] (7,0) -- (7.2,-0.5);
			\draw (-1.5,-2) -- (5,-2) to[out=0,in=250] (7,0) -- (7.2,0.5);
			\draw (-1.5,-1.41) -- (5,-1.41) to[out=0,in=240] (7,0) -- (7.3,0.5);
			\draw (-1.5,-0.71) -- (5,-0.71) to[out=0,in=220] (7,0) -- (7.45,0.4);
				
			
			\draw(-1,-1) -- (3,3);
			\draw(-1.5,-0.375) -- (5,1.25);
			\draw(-1.5,1.5) -- (2.5,-2.5);
			\draw(-1.5,0.75) -- (5,-2.5);
			\draw(-1.5,1.06) -- (3.54,-2.5);
			\draw(1,2.87) -- (-0.87,-2.5);
			
		
			\node at (0,0) {$\bullet$};
			\node[below] at (0.1,-0.1) {$V_1$};
			\node at (7,0) {$\bullet$};
			\node[right] at (7.1,0.05) {$V_2$};
			\node at (1.5,5) {$\bullet$};
			\node[above] at (1.5,5.1) {$V_3$};
			
			\node at (1,1) {$\bullet$}; 
			\node at (4,1) {$\bullet$}; 
			\node at (1,0.25) {$\bullet$}; 
			\node at (2,2) {$\bullet$}; 
			\node at (4,-2) {$\bullet$}; 
			\node at (2.83,-1.41) {$\bullet$}; 
			\node at (2.83,-2) {$\bullet$}; 
			\node at (2,-1.41) {$\bullet$}; 
			
			\node at (-0.25,0.25) {$\bullet$}; 
			\node at (0.71,-0.71) {$\bullet$}; 
			\node at (-0.25,-0.71) {$\bullet$}; 
			\node at (0.71,2) {$\bullet$}; 
	\node at (2,-3.5) {(B) The configuration $\D_{1,-1}$};
		\end{scope}
	\end{scope}
\end{tikzpicture}

	\caption{The $(3,2)$-configurations $\D_{1,1}$ and $\D_{1,-1}$ with chamber weight 0 and 1, respectively.}\label{fig:dual_ZP15}
\end{figure}

We can also construct degenerations of the pairs $(\M^{\alpha,\beta},\M^{\alpha',\beta'})$ which form Zariski pairs too, as in Section~\ref{sec:deg}. We give two different pairs: the first one, denoted by $\D^1_{\alpha,\beta}$, admits one point of multiplicity~5 and the second one, denoted by $\D^2_{\alpha,\beta}$, admits two points of multiplicity~5. These two constructions are summarized in the Table~\ref{tab:ZP15_1pt5} and Table~\ref{tab:ZP15_2pts5}.

\renewcommand{\arraystretch}{1.5}
\begin{table}[h!]
  \centering
	\begin{tabular}{|l|l l|}
		\hline
		\multicolumn{3}{|c|}{ Configurations $\D^1_{\alpha,\beta}$}\\[2pt]
		\hline
		Created alignment & \multicolumn{2}{c|}{ $\set{V_2,S_4, S_5, S_8^\alpha, S_{12}^\beta}$ } \\[4pt]
		\hline
		\multirow{2}*{Modified points} 
					& $S_5 = (4:2:1)$ 										& $S_7^\alpha = (4:2:\alpha\sqrt{2})$ \\
					& $S_8^\alpha = (2\alpha\sqrt{2}:2:1)$ 		& $S_9^\alpha = (2:\alpha\sqrt{2}:1)$  \\[4pt]
		\hline
		Chamber weight & \multicolumn{2}{c|}{$\tau(\D^1_{\alpha,\beta})=0\Longleftrightarrow \alpha\beta=1$}\\[2pt]
		\hline
	\end{tabular}
	\vspace{0.25cm}
	\caption{\label{tab:ZP15_1pt5} Description of the configurations $\D^1_{\alpha,\beta}$.}
\end{table}

\begin{table}[h!]
  \centering
	\begin{tabular}{|l|l l l|}
		\hline
		\multicolumn{4}{|c|}{ Configurations $\D^2_{\alpha,\beta}$}\\[2pt]
		\hline
		Created alignments & \multicolumn{3}{c|}{ $\set{V_2,S_4, S_5, S_8^\alpha, S_{12}^\beta}$ and $\set{V_3,S_4, S_6, S_9^\alpha, S_{11}^\beta}$ } \\[4pt]
		\hline
		\multirow{3}*{Modified points} 
				& $S_5 = (4:2:1)$ & $S_7^\alpha = (4:2:\alpha\sqrt{2})$ & $S_8^\alpha = (2\alpha\sqrt{2}:2:1)$\\
				& $S_9^\alpha = (2:\alpha\sqrt{2}:1)$ & $S_6 = (8:1:4)$ & $S_{10}^\beta = (8:1:\beta\sqrt{2})$  \\
				& \multicolumn{3}{c|}{ $S_{11}^\beta = (4:\beta\sqrt{2}:2) \quad\quad S_{12}^\beta = (4\beta\sqrt{2}:2:1)$ }\\[4pt]
		\hline
		Chamber weight & \multicolumn{3}{c|}{$\tau(\D^2_{\alpha,\beta})=0\Longleftrightarrow \alpha\beta=1$}\\[2pt]
		\hline
	\end{tabular}
	\vspace{0.25cm}
	\caption{\label{tab:ZP15_2pts5} Description of the configurations $\D^2_{\alpha,\beta}$.}
\end{table}
\vspace{0.5cm}

\subsection{Examples with 17 lines}\label{subsec:17lines}\mbox{}

We create Zariski pairs with 17 lines based on a family of arrangements composed of 9 arrangements (determined by 3 parameters $\alpha,\beta,\gamma\in\{-1,1\}$) and denoted by $\N^{\alpha,\beta,\gamma}$. More precisely, they are dual arrangements of the $(3,2)$-configurations $\E_{\alpha,\beta,\gamma}=(\V,\S_{\alpha,\beta,\gamma},\L_{\alpha,\beta,\gamma},\pl)$ defined by $\V=\{V_1,V_2,V_3\}$ with $V_1=(0:0:1)$, $V_2=(1:0:0)$, $V_3=(0:1:0)$, and $\S_{\alpha,\beta,\gamma}=\set{S_1,S_2,S_3,S_4,S_5}\sqcup\set{S_6^\alpha,S_7^\alpha,S_8^\alpha}\sqcup\set{S_9^\beta,S_{10}^\beta,S_{11}^\beta}\sqcup\set{S_{12}^\gamma,S_{13}^\gamma,S_{14}^\gamma}$ where
\begin{equation*}
	\begin{array}{l p{0.5cm} l p{0.5cm} l}
		\multicolumn{5}{c}{S_1=(1:2:1), \quad\quad\quad\quad S_2=(2:1:1)} \\
		S_3=(2:-4:1), && S_4=(-4:2:1), && S_5=(5:10:-2),\\
		S_6^\alpha=(2:-4:\alpha\sqrt{2}), && S_7^\alpha=(\alpha\sqrt{2}:-4:1), && S_8^\alpha=(1:-2\alpha\sqrt{2}:1),\\
		S_9^\beta=(-4:2:\beta\sqrt{2}), && S_{10}^\beta=(-4:\beta\sqrt{2}:1), && S_{11}^\beta=(-2\beta\sqrt{2}:1:1),\\
		S_{12}^\gamma=(5:5\gamma:-2), &&	S_{13}^\gamma=(5\gamma:5:-1), &&	S_{14}^\gamma=(10:5:-2\gamma).
	\end{array}
\end{equation*}

\pagebreak
\begin{thm}
	Let $\N^{\alpha,\beta,\gamma}$ be the dual arrangements of the configuration $\E_{\alpha,\beta,\gamma}$.  For any $\alpha,\beta,\gamma\in\set{-1,1}$, we have:
	\begin{enumerate}
		\item The configuration $\E_{\alpha,\beta,\gamma}$ has the following combinatorics
			\begin{equation*}
				\qquad\begin{array}{l}
					\Big\{ \big\{V_1,S_1,S_5\big\}, \big\{V_1,S_2,S_{14}^\gamma\big\}, \big\{V_1,S_3,S_6^\alpha\big\}, \big\{V_1,S_4,S_9^\beta\big\}, \big\{V_1,S_7^\alpha,S_8^\alpha\big\},\\[0.5em] \big\{V_1,S_{10}^\beta,S_{11}^\beta\big\}, \big\{V_1,S_{12}^\gamma,S_{13}^\gamma\big\},
					\big\{V_2,S_1,S_4\big\}, \big\{V_2,S_2,S_{11}^\beta\big\}, \big\{V_2,S_3,S_7^\alpha\big\},\\[0.5em] \big\{V_2,S_5,S_{13}^\gamma\big\}, \big\{V_2,S_6^\alpha,S_8^\alpha\big\}, \big\{V_2,S_9^\beta,S_{10}^\beta\big\}, \big\{V_2,S_{12}^\gamma,S_{14}^\gamma\big\},
					\big\{V_3,S_1,S_8^\alpha\big\},\\[0.5em]
					\big\{V_3,S_2,S_3\big\}, \big\{V_3,S_4,S_{10}^\beta\big\}, \big\{V_3,S_5,S_{12}^\gamma\big\}, \big\{V_3,S_6^\alpha,S_7^\alpha\big\}, \big\{V_3,S_9^\beta,S_{11}^\beta\big\}, \big\{V_3,S_{13}^\gamma,S_{14}^\gamma\big\} \Big\}
				\end{array}
			\end{equation*}
		
		\item $\E_{\alpha,\beta,\gamma}$ is stable.
		
		\item If $\alpha',\beta',\gamma'\in\set{-1,1}$ are such that $\alpha\beta\gamma\neq\alpha'\beta'\gamma'$, then there is no homeomorphism between $(\PC^2,\N^{\alpha,\beta,\gamma})$ and $(\PC^2,\N^{\alpha',\beta',\gamma'})$.
	\end{enumerate}
\end{thm}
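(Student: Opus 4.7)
The plan is to follow the same three-step structure used in the proofs of Theorem~\ref{thm:homeo_ZP13} (13 lines) and Theorem~\ref{thm:ZP15} (15 lines), adapted to the larger configuration.

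\textbf{Step (1) -- Combinatorics.} I would verify the listed collinearities directly from the given coordinates of $V_1,V_2,V_3$ and $S_1,\ldots,S_{14}^\gamma$, by evaluating the corresponding $3\times 3$ determinants and checking each vanishes identically in $\alpha,\beta,\gamma\in\{-1,1\}$. The parameters $\alpha,\beta,\gamma$ enter the coordinates symmetrically (the point sets $\S_\alpha,\S_\beta,\S_\gamma$ are built by the same rule as in Sections~\ref{subsec:ZP13} and~\ref{subsec:15lines}), so the verification reduces to a finite number of determinant identities that can be checked block by block. Simultaneously one must confirm no extra triple is collinear, which amounts to showing a complementary list of determinants is nonzero for all eight parameter choices.

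\textbf{Step (2) -- Stability.} I would distinguish vertices from surrounding-points purely combinatorially. In the given list, each of $V_1,V_2,V_3$ belongs to exactly $7$ triples (since there are $14$ surrounding-points and each vertex lies on a line of $\L$ through exactly two of them), whereas every surrounding-point lies in exactly $3$ triples (one through each vertex). Hence any automorphism of the combinatorics must send $\V$ to $\V$, so $\E_{\alpha,\beta,\gamma}$ is stable by definition.

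\textbf{Step (3) -- Topological distinction.} Since the configuration is planar, uniform, and stable, Corollary~\ref{cor:invariant} applies, so $\tau(\E_{\alpha,\beta,\gamma})\in\ZZ_2$ is an ordered-homeomorphism invariant of $\N^{\alpha,\beta,\gamma}$. I would fix an affine chart in which the line $(V_2,V_3)$ is at infinity and the two other lines joining pairs of vertices are the coordinate axes of $\RR^2$, and compute the parity of $\#\big(\S_{\alpha,\beta,\gamma}\cap\ch_1\big)$ in the first quadrant $\ch_1$. The key observation is that the $14$ surrounding-points decompose into four combinatorial blocks $\S$, $\S_\alpha$, $\S_\beta$, $\S_\gamma$, and by inspection of the coordinates, flipping the sign of $\alpha$ (respectively $\beta$, $\gamma$) sends exactly one point of $\S_\alpha$ (resp.\ $\S_\beta$, $\S_\gamma$) out of $\ch_1$ and brings another in, toggling by~$1$ the parity of surrounding-points in $\ch_1$ contributed by that block, while leaving the other blocks unchanged. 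Consequently $\tau(\E_{\alpha,\beta,\gamma})$ depends only on the product $\alpha\beta\gamma$, and takes the two distinct values in $\ZZ_2$ according to the sign of this product. Combined with Corollary~\ref{cor:invariant}, this yields the non-existence of a homeomorphism as soon as $\alpha\beta\gamma\neq\alpha'\beta'\gamma'$.

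\textbf{Main obstacle.} Steps (1) and (2) are routine, parallel to the 13- and 15-line cases. The real care is in Step (3): one must verify that each of the three parameters $\alpha,\beta,\gamma$ acts independently on the parity of $\ch_1\cap\S_{\alpha,\beta,\gamma}$. For the $(3,2)$-configurations with $13$ and $15$ lines this was done by drawing the real picture explicitly; for $17$ lines and $14$ surrounding points the picture is more cluttered, and the cleanest approach is to tabulate for each block $\S_\alpha,\S_\beta,\S_\gamma$ the sign conditions on the affine coordinates $(x,y)$ defining membership in $\ch_1$, and read off directly how each sign-flip changes the count by an odd number. Once this bookkeeping is set up, the eight values of $\tau$ fall into the two cosets determined by the sign of $\alpha\beta\gamma$, finishing the proof.
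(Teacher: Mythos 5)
Your proposal matches the paper's (essentially unwritten) proof: the paper explicitly defers this theorem to ``similar construction and arguments'' as in Sections~2--4, namely the determinant verification of the collinearities, stability via counting alignments through each point ($7$ for vertices versus $3$ for surrounding-points), and the parity count of surrounding-points in one chamber, exactly as you outline. One small slip in Step~(3): ``sends exactly one point out of $\ch_1$ and brings another in'' would leave the count unchanged; what actually happens is that each block $\S_\alpha,\S_\beta,\S_\gamma$ places $0$ or $1$ of its three points in the first quadrant according to the sign of its parameter (with the fixed block contributing $2$), so each sign flip changes the count by exactly $1$ --- which is precisely the odd-change bookkeeping you say must be verified, and it does check out, giving $\tau=0$ iff $\alpha\beta\gamma=1$.
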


As example, the configurations $\E_{1,1,1}$ and $\E_{-1,-1,-1}$ are depicted in Figure~\ref{fig:dual_ZP17}. From left to right and from top to bottom, the surrounding points are given by:
\begin{itemize}
	\item $S_4, S_1, S_{10}^\beta, S_9^\beta, S_{11}^\beta, S_2, S_{14}^\gamma, S_{12}^\gamma, S_8^\alpha, S_6^\alpha, S_7^\alpha, S_3, S_{13}^\gamma,S_5$ in $\E_{1,1,1}$, (see~Figure~\ref{fig:dual_ZP17}--(A));
	\item $S_6^\alpha, S_8^\alpha, S_{12}^\gamma, S_{14}^\gamma, S_4, S_1, S_2, S_{11}^\beta, S_{10}^\beta, S_9^\beta, S_7^\alpha, S_3, S_5, S_{13}^\gamma$ in $\E_{-1,-1,-1}$, (see~Figure~\ref{fig:dual_ZP17}--(B)).
\end{itemize}

\begin{figure}[h!]
	\begin{tikzpicture}
 \begin{scope}[shift={(-3.5,0)},scale=0.45]
	\pgfmathsetmacro{\sr}{1.41}
	\pgfmathsetmacro{\a}{1}
	\pgfmathsetmacro{\b}{1}
	\pgfmathsetmacro{\c}{1}
	
	\draw[dashed, color=gray!50] (0,-6) -- (0,4) to[out=90,in=-60] (-1,7) -- (-1.5,7.7);
	\draw (-4,-6) -- (-4,4) to[out=90,in=-160] (-1,7) -- (-0.25,7.2);
	\draw (-2.5,-6) -- (-2.5,4) to[out=90,in=-130] (-1,7) -- (-0.4,7.65);
	\draw (1,-6) -- (1,4) to[out=90,in=-35] (-1,7) -- (-1.6,7.4);
	\draw (2,-6) -- (2,4) to[out=90,in=-15] (-1,7) -- (-1.7,7.2);
	\draw (\a*\sr,-6) -- (\a*\sr5,4) to[out=90,in=-25] (-1,7) -- (-1.65,7.3);
	\draw (-2*\b*\sr,-6) -- (-2*\b*\sr,4) to[out=90,in=-140] (-1,7) -- (-0.3,7.5);
	\draw (-\c*5,-6) -- (-\c*5,4) to[out=90,in=-175] (-1,7) -- (-0.2,7.05);
	
	\draw[dashed, color=gray!50] (-5.5-\c/2,0) -- (5-\c,0) to[out=0,in=150] (8-\c,-1) -- (8-\c+0.5,-1.5);
	\draw (-5.5-\c/2,-5) -- (5-\c,-5) to[out=0,in=-100] (8-\c,-1) -- (8-\c+0.07,-0.5);
	\draw (-5.5-\c/2,-4) -- (5-\c,-4) to[out=0,in=-110] (8-\c,-1) -- (8-\c+0.15,-0.5);
	\draw (-5.5-\c/2,-\c*2.5) -- (5-\c,-\c*2.5) to[out=0,in=170+\c*60] (8-\c,-1) -- (8.25-\c,\c/2-1);
	\draw (-5.5-\c/2,-2*\a*\sr) -- (5-\c,-2*\a*\sr) to[out=0,in=-120] (8-\c,-1) -- (8.3-\c,-0.5);
	\draw (-5.5-\c/2,1) -- (5-\c,1) to[out=0,in=120] (8-\c,-1) -- (8.25-\c,-1.5);
	\draw (-5.5-\c/2,2) -- (5-\c,2) to[out=0,in=100] (8-\c,-1) -- (8.1-\c,-1.5);
	\draw (-5.5-\c/2,\b*\sr) -- (5-\c,\b*\sr) to[out=0,in=110] (8-\c,-1) -- (8.15-\c,-1.5);
	
	\draw[dashed, color=gray!50] (-5,7) -- (5,7) to[out=0,in=90] (7,5) --(7,-6);
	\draw (-3,-6) -- (2,4);
	\draw (-5.5-\c/2,-5.5/2-\c/4) -- (5-\c,5/2-\c/2);
	\draw (-5-\c,-5*\c-1) -- (5-\c,5*\c-1);
	\draw (-4.5,2.25) -- (4,-2);
	\draw (3,-6) -- (-2,4);
	\draw (3*\a*\sr/2,-6) -- (-\a*\sr,4);
	\draw  (-5,5*\b*\sr/4) --  (4,-\b*\sr);
	
	\node at (0,0) {$\bullet$}; 
		\node[left] at (-0.5,-0.05) {$V_1$}; 
		\node[right] at (8.2-\c,-1) {$V_2$}; 
		\node[above] at (-1,7.2) {$V_3$}; 
	\node at (1,2) {$\bullet$}; 
	\node at (2,1) {$\bullet$}; 
	\node at (2,-4) {$\bullet$}; 
	\node at (-4,2) {$\bullet$}; 
	\node at (-2.5,-5) {$\bullet$}; 
	\node at (8-\c,-1) {$\bullet$};
	\node at (-1,7) {$\bullet$};
	\node at (\a*\sr, -2*\a*\sr) {$\bullet$}; 
	\node at (\a*\sr,-4) {$\bullet$}; 
	\node at (1,-2*\a*\sr) {$\bullet$}; 
	\node at (-2*\b*\sr,\b*\sr) {$\bullet$}; 
	\node at (-4,\b*\sr) {$\bullet$}; 
	\node at (-2*\b*\sr,1) {$\bullet$}; 
	\node at (-2.5,-\c*2.5) {$\bullet$}; 
	\node at (-\c*5,-5) {$\bullet$}; 
	\node at (-\c*5,-\c*2.5) {$\bullet$}; 
	
	\node at (0,-7) {(A) The configuration $\E_{1,1,1}$};
	
 \end{scope}

 \begin{scope}[shift={(3.5,0)},scale=0.45]
	\pgfmathsetmacro{\sr}{1.41}
	\pgfmathsetmacro{\a}{-1}
	\pgfmathsetmacro{\b}{-1}
	\pgfmathsetmacro{\c}{-1}
	
	\draw[dashed, color=gray!50] (0,-6) -- (0,4) to[out=90,in=-60] (-1,7) -- (-1.5,7.7);
	\draw (-4,-6) -- (-4,4) to[out=90,in=-160] (-1,7) -- (-0.25,7.2);
	\draw (-2.5,-6) -- (-2.5,4) to[out=90,in=-130] (-1,7) -- (-0.4,7.65);
	\draw (1,-6) -- (1,4) to[out=90,in=-35] (-1,7) -- (-1.6,7.4);
	\draw (2,-6) -- (2,4) to[out=90,in=-15] (-1,7) -- (-1.7,7.2);
	\draw (\a*\sr,-6) -- (\a*\sr5,4) to[out=90,in=-110] (-1,7) -- (-0.8,7.6);
	\draw (-2*\b*\sr,-6) -- (-2*\b*\sr,4) to[out=90,in=-10] (-1,7) -- (-1.7,7.1);
	\draw (-\c*5,-6) -- (-\c*5,4) to[out=90,in=-05] (-1,7) -- (-1.7,7.05);
	
	\draw[dashed, color=gray!50] (-5.5-\c/2,0) -- (5-\c,0) to[out=0,in=150] (8-\c,-1) -- (8-\c+0.5,-1.5);
	\draw (-5.5-\c/2,-5) -- (5-\c,-5) to[out=0,in=-100] (8-\c,-1) -- (8-\c+0.07,-0.5);
	\draw (-5.5-\c/2,-4) -- (5-\c,-4) to[out=0,in=-110] (8-\c,-1) -- (8-\c+0.15,-0.5);
	\draw (-5.5-\c/2,-\c*2.5) -- (5-\c,-\c*2.5) to[out=0,in=100] (8-\c,-1) -- (8.175-\c,-1.5);
	\draw (-5.5-\c/2,-2*\a*\sr) -- (5-\c,-2*\a*\sr) to[out=0,in=95] (8-\c,-1) -- (8.05-\c,-1.5);
	\draw (-5.5-\c/2,1) -- (5-\c,1) to[out=0,in=120] (8-\c,-1) -- (8.25-\c,-1.5);
	\draw (-5.5-\c/2,2) -- (5-\c,2) to[out=0,in=110] (8-\c,-1) -- (8.1-\c,-1.5);
	\draw (-5.5-\c/2,\b*\sr) -- (5-\c,\b*\sr) to[out=0,in=-155] (8-\c,-1) -- (8.4-\c,-0.75);
	
	\draw[dashed, color=gray!50] (-5,7) -- (7,7) to[out=0,in=90] (9,5) --(9,-6);
	\draw (-3,-6) -- (2,4);
	\draw (-5.5-\c/2,-5.5/2-\c/4) -- (5-\c,5/2-\c/2);
	\draw (-5-\c,-5*\c-1) -- (5-\c,5*\c-1);
	\draw (-4.5,2.25) -- (4,-2);
	\draw (3,-6) -- (-2,4);
	\draw (3*\a*\sr/2,-6) -- (-\a*\sr,4);
	\draw  (-5,5*\b*\sr/4) --  (4,-\b*\sr);
	
	\node at (0,0) {$\bullet$}; 
	\node at (0,0) {$\bullet$}; 
		\node[left] at (-0.35,0.05) {$V_1$}; 
		\node[right] at (8.2-\c,-1) {$V_2$}; 
		\node[above] at (-1,7.2) {$V_3$}; 
	\node at (1,2) {$\bullet$}; 
	\node at (2,1) {$\bullet$}; 
	\node at (2,-4) {$\bullet$}; 
	\node at (-4,2) {$\bullet$}; 
	\node at (-2.5,-5) {$\bullet$}; 
	\node at (8-\c,-1) {$\bullet$};
	\node at (-1,7) {$\bullet$};
	\node at (\a*\sr, -2*\a*\sr) {$\bullet$}; 
	\node at (\a*\sr,-4) {$\bullet$}; 
	\node at (1,-2*\a*\sr) {$\bullet$}; 
	\node at (-2*\b*\sr,\b*\sr) {$\bullet$}; 
	\node at (-4,\b*\sr) {$\bullet$}; 
	\node at (-2*\b*\sr,1) {$\bullet$}; 
	\node at (-2.5,-\c*2.5) {$\bullet$}; 
	\node at (-\c*5,-5) {$\bullet$}; 
	\node at (-\c*5,-\c*2.5) {$\bullet$}; 
	
	\node at (2,-7) {(B) The configuration $\E_{-1,-1,-1}$};
 \end{scope}
\end{tikzpicture}
	\caption{The $(3,2)$-configurations $\E_{1,1,1}$ and $\E_{-1,-1,-1}$ with chamber weight 0 and 1, respectively.}\label{fig:dual_ZP17}
\end{figure}

From the pairs constructed with $\N^{\alpha,\beta,\gamma}$, we can derive three other families of Zariski pairs. They are obtained from the arrangements $\N^{\alpha,\beta,\gamma}$ by creating singular points of multiplicity~5. The constructions are summarized in Tables~\ref{tab:ZP17_1pt5},~\ref{tab:ZP17_2pts5} and~\ref{tab:ZP17_2pts5_diago}

\vspace{0.5cm}

\begin{table}[h!]
  \centering
	\begin{tabular}{|l|l l|}
		\hline
		\multicolumn{3}{|c|}{ Configurations $\E^1_{\alpha,\beta,\gamma}$}\\[2pt]
		\hline
		Created alignments & \multicolumn{2}{c|}{ $\set{V_2,S_3,S_5,S_{7}^\alpha,S_{13}^\gamma}$ } \\[4pt]
		\hline
		\multirow{2}*{Modified points}
				& $S_3 = (2:-5:1)$ & $S_6^\alpha = (2:-5:\alpha\sqrt{2})$ \\
				& $S_7^\alpha = (\alpha\sqrt{2}:-5:1)$ & $S_8^\alpha = (2:-5\alpha\sqrt{2}:2)$ \\
		\hline
		Chamber weight & \multicolumn{2}{c|}{$\tau(\E^1_{\alpha,\beta,\gamma})=0\Longleftrightarrow \alpha\beta\gamma=1$}\\[2pt]
		\hline
	\end{tabular}
	\vspace{0.25cm}
	\caption{\label{tab:ZP17_1pt5} Description of the configurations $\E^1_{\alpha,\beta,\gamma}$.}
\end{table}

\begin{table}[h!]
  \centering
	\begin{tabular}{|l|l l l|}
		\hline
		\multicolumn{4}{|c|}{ Configurations $\E^2_{\alpha,\beta,\gamma}$, with $\gamma=1$}\\[2pt]
		\hline
		Created alignments & \multicolumn{3}{c|}{ $\set{V_2,S_3,S_5,S_{7}^\gamma,S_{13}^\gamma}$ and $\set{V_3,S_4,S_{10}^\beta,S_{13}^\gamma,S_{14}^\gamma}$} \\[4pt]
		\hline
		\multirow{3}*{Modified points}
				& \multicolumn{3}{c|}{ $S_3 = (2:-5:1)$ and $S_4=(-5:2:1)$} \\				
				& $S_6^\alpha = (2:-5:\alpha\sqrt{2})$ & $S_7^\alpha = (\alpha\sqrt{2}:-5:1)$ & $S_8^\alpha = (2:-5\alpha\sqrt{2}:2)$ \\			
				& $S_9^\beta = (-5:2:\beta\sqrt{2})$ & $S_{10}^\beta = (-5:\beta\sqrt{2}:1)$ & $S_{11}^\beta = (-5\beta\sqrt{2}:2:2)$ \\
		\hline
		Chamber weight & \multicolumn{3}{c|}{$\tau(\E^2_{\alpha,\beta,\gamma})=0\Longleftrightarrow \alpha\beta=1$}\\[2pt]
		\hline
	\end{tabular}
	\vspace{0.25cm}
	\caption{\label{tab:ZP17_2pts5} Description of the configurations $\E^2_{\alpha,\beta,1}$.}
\end{table}

\begin{table}[h!]
  \centering
	\begin{tabular}{|l|l l l|}
		\hline
		\multicolumn{4}{|c|}{ Configurations $\E^3_{\alpha,\beta,\gamma}$}\\[2pt]
		\hline
		Created alignments & \multicolumn{3}{c|}{ $\set{V_1,S_1, S_3, S_6^\alpha, S_{11}^\beta}$ and $\set{V_1,S_2, S_4, S_9^\beta, S_{14}^\gamma}$ } \\[4pt]
		\hline
		\multirow{3}*{Modified points}
				& \multicolumn{3}{c|}{$S_3 = (2:4:1)$ \quad\quad $S_4 = (4:2:1)$}\\
				& $S_6^\alpha = (2:4:\alpha\sqrt{2})$ & $S_7^\alpha = (\alpha\sqrt{2}:4:1)$ & $S_8^\alpha = (1:2\alpha\sqrt{2}:1)$ \\
				& $S_9^\beta = (4:2:\beta\sqrt{2})$ & $S_{10}^\beta = (4:\beta\sqrt{2}:1)$ & $S_{11}^\beta = (2\beta\sqrt{2}:1:1)$ \\
		\hline
		Chamber weight & \multicolumn{3}{c|}{$\tau(\E^3_{\alpha,\beta,\gamma})=0\Longleftrightarrow \alpha\beta\gamma=1$}\\[2pt]
				\hline
	\end{tabular}
	\vspace{0.25cm}
	\caption{\label{tab:ZP17_2pts5_diago} Description of the configurations $\E^3_{\alpha,\beta,\gamma}$.}
\end{table}

\bigskip
\section{Relation with $\mathcal{I}$-invariant and proof of the main results}\label{sec:proof}
The main idea of the proof is that the chamber weight $\tau$ is a dual version of the $\I$-invariant for complexified real arrangements. Since we are considering $(3,m)$-configurations, we only need to define the $\I$-invariant for a triangular inner-cyclic arrangement. In this way, we first introduce fundamental notions on the $\I$-invariant. Then, we prove our main results from Section~\ref{sec:configuration} by relating this object with $(3,m)$-configurations. Finally, we study the Pappus and non-Pappus arrangements to point out the possible use of this point of view in the study of the characteristic varieties.

\subsection{The $\I$-invariant}\mbox{}

\subsubsection{Definition}

The $\I$-invariant was first introduced by Artal, Florens and the first author in~\cite{AFG:invariant}, and generalized for algebraic plane curves by Meilhan and the first author in~\cite{GM:invariant}. It can be defined as an adaptation of the linking number of knots. In~\cite{AFG:invariant}, the idea is to consider a cycle $\gamma$ in the boundary of a regular neighborhood of the arrangement and then consider it as a cycle in the complement. To eliminate the indeterminacies in the definition of the cycle, we consider the image of the cycle by a particular character $\xi$. In the other hand, following~\cite{GM:invariant}, the cycle $\gamma$ is considered as contained in the arrangement and then, it is viewed as an element of the complement of the sub-arrangement (denoted $\A_\gamma^c$) which does not intersect $\gamma$. The indeterminacies are resolved using a quotient of $\HH_1(\PC^2\setminus\A_\gamma^c)$ by a suitable sub-group. We use here a combination of these two approaches by assuming that the cycle $\gamma$ is contained in the arrangement as in~\cite{GM:invariant}, but we will consider a character $\xi$ as in~\cite{AFG:invariant}.

We consider $\A=\set{D_1,\dots,D_n}$ a line arrangement in $\PC^2$ such that $D_1$, $D_2$ and $D_3$ are in generic position and, for $i,j\in\set{1,2,3}$, $D_i\cap D_j$ is a singular point of multiplicity 2 in $\A$. Let $\gamma$ be the image of the embedding of $S^1$ in the ``triangle'' formed by $D_1$, $D_2$ and $D_3$, such that $\gamma$ has no self-intersection, it passes exactly once by each singular point of the triangle and it does not traverse other singular point of $\A$. We take an orientation of $\gamma$ such that this path passes through $D_1$, $D_2$ and $D_3$ in this cyclic order. We denote by $\A_\gamma^c$ the sub arrangement of $\A$ composed of the lines $D_4,\dots,D_n$. The sub-arrangement $D_1\cup D_2 \cup D_3$ is called the \emph{support} of $\gamma$. Observe that, by construction, $\gamma$ is contained in $\PC^2\setminus\A_\gamma^c$.

Let $\xi:\HH_1(\PC^2\setminus\A)\rightarrow \CC^*$ be a character on the first integral homology group of the complement of $\A$, such that $\xi(\m_{D_i})= 1$ if and only if $i=1,2,3$, where $\m_D$ is the homology class of a meridian of the line $D\in\A$. We denote by $\tilde{\xi}$ the restriction of $\xi$ to $\HH_1(\PC^2\setminus\A_\gamma^c)$. The triple $(\A,\xi,\gamma)$ is called a \emph{triangular inner-cyclic arrangement} if for all singular point $P$ of $\A$ contained in $D_1\cup D_2\cup D_3$, we have $\prod\limits_{D\ni P}\xi(\m_D) =1$.

\begin{rmk}
	The previous definition of triangular inner-cyclic arrangements is slightly stronger than the one in~\cite{AFG:invariant}. Indeed, no line of $\A$ is mapped to~$1$ by the character~$\xi$ except $D_1,D_2$ and $D_3$.
\end{rmk}

\begin{thm}[\cite{AFG:invariant}]
	Let $(\A,\xi,\gamma)$ be a triangular inner-cyclic arrangement. Then, the value
	\begin{equation*}
		\I(\A,\xi,\gamma) = \tilde{\xi} ([\gamma])
	\end{equation*}
	is invariant by homeomorphism of the pair $(\PC^2,\A)$ respecting the order and the orientation of~$\A$.
\end{thm}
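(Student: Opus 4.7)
The plan is to reduce the statement to the well-definedness of $\widetilde\xi([\gamma])$ with respect to the choice of the embedded representative $\gamma$: once this is established, invariance under ordered, orientation-preserving homeomorphism becomes a formal consequence of the functoriality of $H_1$ of the complement.

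First I would address independence of $\gamma$. Fixing the three corners $Q_{ij}=D_i\cap D_j$ (for $\{i,j\}\subset\{1,2,3\}$), any admissible $\gamma$ decomposes as a concatenation $\gamma_1*\gamma_2*\gamma_3$ where $\gamma_i\subset D_i$ is an embedded arc joining the two corners on $D_i$ and avoiding the (finitely many) other points of $D_i\cap\Sing(\A)$. Given a second admissible $\gamma'=\gamma_1'*\gamma_2'*\gamma_3'$, each pair of arcs $\gamma_i,\gamma_i'$ in $D_i\simeq S^2$ (with the same endpoints and avoiding the same finite set) is connected by a finite sequence of elementary moves that push the arc across one extra singular point $P\in D_i\cap\Sing(\A)\setminus\{Q_{ij}\}_{j\neq i}$ at a time. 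The plan is to compute the effect of a single such move in $H_1(\PC^2\setminus\A_\gamma^c)$ and then apply $\widetilde\xi$.

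Locally near such a point $P$, a bidisk neighborhood intersects $\A_\gamma^c$ in the union of the lines from $\A_\gamma^c$ through $P$; its complement deformation-retracts to a bouquet of circles whose classes are the meridians $\m_D$ for $D\in\A_\gamma^c$ with $P\in D$. A standard local computation shows that pushing the arc across $P$ modifies $[\gamma]$ in $H_1(\PC^2\setminus\A_\gamma^c)$ by $\pm\sum_{D\in\A_\gamma^c,\,P\in D}\m_D$, with a sign determined by the fixed orientation of $\gamma$. Applying $\widetilde\xi$ multiplicatively and using $\xi(\m_{D_i})=1$, the inner-cyclic identity
\begin{equation*}
\prod_{D\in\A,\,P\in D}\xi(\m_D)=1
\end{equation*}
reduces to $\prod_{D\in\A_\gamma^c,\,P\in D}\xi(\m_D)=1$, so the value $\widetilde\xi([\gamma])$ is unchanged by each elementary move, hence by the whole deformation $\gamma\mapsto\gamma'$.

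For the homeomorphism invariance, let $\Phi\colon(\PC^2,\A)\to(\PC^2,\A')$ be an ordered, orientation-preserving homeomorphism. Then $\Phi$ restricts to a homeomorphism of complements inducing an isomorphism $\Phi_*\colon H_1(\PC^2\setminus\A)\to H_1(\PC^2\setminus\A')$ sending $\m_{D_i}$ to $\m_{\Phi(D_i)}$ (the orientation condition ensures this identification, as opposed to $-\m_{\Phi(D_i)}$). Setting $\xi':=\xi\circ\Phi_*^{-1}$, the triple $(\A',\xi',\Phi(\gamma))$ inherits the triangular inner-cyclic structure, and by functoriality $\widetilde{\xi'}([\Phi(\gamma)])=\widetilde\xi([\gamma])$. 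Combined with the well-definedness proven in the first step applied to both sides, this yields $\I(\A',\xi',\gamma')=\I(\A,\xi,\gamma)$ for any admissible representatives. The principal obstacle is the local computation of a crossing move: one must keep track of contributions from support lines $D_1,D_2,D_3$ (potentially passing through $P$) and of lines from $\A_\gamma^c$, and it is precisely the combination of the hypothesis $\xi(\m_{D_i})=1$ with the inner-cyclic relation at $P$ that isolates and cancels the relevant terms.
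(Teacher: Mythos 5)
This theorem is not proved in the paper at all: it is imported from~\cite{AFG:invariant} (note the citation in the theorem header), and the paper only re-derives a computational formula for it in the complexified-real case (Proposition~\ref{propo:computation}). There is therefore no internal proof to compare your argument against; I can only assess it on its own terms, and on those terms it is sound and follows the expected route. Your two ingredients are exactly the right ones: (i) independence of $\tilde{\xi}([\gamma])$ from the choice of admissible cycle, obtained by writing the difference of two choices, line by line, as a sum of small loops in $D_i$ around the non-corner points of $D_i\cap\Sing(\A)$, each of which is homologous in $\PC^2\setminus\A_\gamma^c$ to $\sum_{D\in\A_\gamma^c,\,P\in D}\m_D$ and hence killed by $\tilde{\xi}$ precisely because the inner-cyclic relation at $P$ combines with $\xi(\m_{D_i})=1$; and (ii) transport of the whole structure by $\Phi_*$, where the order condition guarantees $\Phi(D_i)=D_i'$ and the orientation condition guarantees $\Phi_*\m_D=\m_{\Phi(D)}$ rather than its inverse. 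Two small remarks. First, your worry about support lines other than $D_i$ passing through a non-corner point $P\in D_i$ is vacuous: $D_i\cap D_j$ is a single point, namely the corner, so the only support-line contribution at $P$ is $\m_{D_i}$ itself, which $\xi$ already sends to $1$. Second, it is worth stating explicitly that $\tilde{\xi}$ is well defined, i.e.\ that $\xi$ factors through the surjection $\HH_1(\PC^2\setminus\A)\to\HH_1(\PC^2\setminus\A_\gamma^c)$ killing $\m_{D_1},\m_{D_2},\m_{D_3}$; this is immediate from $\xi(\m_{D_i})=1$ for $i=1,2,3$, but it is the hypothesis doing the work in letting you evaluate $\tilde{\xi}$ on classes supported in the triangle. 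Neither point is a gap.
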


\begin{rmk}\mbox{}
	\begin{enumerate}
		\item The condition ``respecting the orientation'' means that the homeomorphism of the pair must respect the global orientation of $\PC^2$ and the local orientation of the meridian around the line of $\A$. 
		\item By~\cite[Corollary 2.6.]{AFG:invariant}, if $\A$ is a complexified real arrangement then
		\[
		    \I(\A,\xi,\gamma)=\pm 1.
		\]
	\end{enumerate}
\end{rmk}

\subsubsection{Computation}

The computation of the $\I$-invariant is described in~\cite[Section 4]{AFG:invariant}. We explain here how to compute it in the particular case of a triangular inner-cyclic arrangement which is also complexified real, since it is the case of the DPA of any $(3,m)$-configuration.

In the general case, the $\I$-invariant is computed from the \emph{wiring diagram} of $\A$ (for further details see~\cite[Section~4]{AFG:invariant}). For a complexified real arrangement $\A$, the real picture $\A\cap\PR^2$ is a wiring diagram of $\A$. Up to a projective transformation, we may assume that $D_1$, $D_2$ and $D_3$ are respectively $z=0$, $x+y=0$ and $x-y=0$, and that no line of $\A$ is of the form $y+az=0$ for $a\in\RR$. In the following, $D_1$ is considered as the line at infinity and $\CC^2$ (resp. $\RR^2$) refers to $\PC^2\setminus D_1$ (resp. $\PR^2\setminus D_1$). The trace of the lines $D_2,\dots,D_n$ in $\RR^2$ are denoted by $D'_2,\dots,D'_n$ respectively. Let $H$ be the half-plane of $\RR^2$ defined by $H=\set{(x,y)\in\RR^2\mid x<0}$. In order to compute the invariant, let us denote by $\D_2$ (resp. $\D_3$)  the set of lines defined by $\set{D\in\A \mid D'\cap D'_2 \in H,\ \Sl(D')<\Sl(D'_2)}$ (resp. $\set{D\in\A \mid D'\cap D'_3 \in H,\ \Sl(D')<\Sl(D'_3)}$), where $\Sl(D')$ is the slope of the affine real line $D'$; and by $\D_1$ the set of lines $D$ in $\A$ such that their associated real affine line $D'$ has a slope between -1 and 1.

\begin{rmk}\label{rmk:D2D3}
    We have $\D_2\subset\D_3$ and $\D_3\setminus\D_2=\D_1\cap\D_3$. Indeed, if a line $D_j$ intersects $D'_2$ in $H$ with a slope smaller than $-1=\Sl(D'_2)$, then this line also intersects $D'_3$ in $H$ with a slope smaller than $1=\Sl(D'_3)$ since the intersection point of $D'_2$ and $D'_3$ is on the boundary of the half-plane $H$. For example, it is the case of $D'_5$ in Figure~\ref{fig:Ceva7}.
\end{rmk}

\begin{propo}\label{propo:computation}
	Let $(\A,\xi,\gamma)$ be a triangular inner-cyclic arrangement. If $\A$ is a complexified real arrangement, then we have
	\begin{equation*}
		\I(\A,\xi,\gamma) = \xi\Big( \sum\limits_{D\in\D_1\cap\D_3} \m_D \Big) \inv.
	\end{equation*}
\end{propo}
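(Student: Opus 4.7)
The plan is to compute $\I(\A,\xi,\gamma)=\tilde\xi([\gamma])$ by lifting $\gamma$ to a loop $\tilde\gamma$ in $\PC^2\setminus\A$, expressing its class $[\tilde\gamma]\in\HH_1(\PC^2\setminus\A)$ as a $\ZZ$-combination of meridians, and applying $\xi$. This suffices because the inclusion-induced map $\HH_1(\PC^2\setminus\A)\to\HH_1(\PC^2\setminus\A_\gamma^c)$ sends $[\tilde\gamma]$ to $[\gamma]$, and $\tilde\xi$ is the restriction of $\xi$ to the subgroup. In the normalized coordinates specified before the statement ($D_1$ at infinity, $D_2\colon x+y=0$, $D_3\colon x-y=0$), realize $\gamma$ geometrically as three arcs traversing, in the prescribed cyclic order, the sides of the triangle with vertices $D_1\cap D_2$, $D_2\cap D_3=(0,0)$, $D_3\cap D_1$, and produce $\tilde\gamma$ by pushing those arcs slightly off $D_2$ and $D_3$ into the half-plane $H=\set{x<0}$, and off $D_1$ near infinity.

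Each time the perturbed path crosses a line $D\in\A_\gamma^c$, the class $[\tilde\gamma]$ picks up a signed meridian $\pm\m_D$. Running the wiring-diagram algorithm of~\cite[Section~4]{AFG:invariant} on the real picture $\A\cap\PR^2$, the arc along $D_2$ contributes exactly the meridians indexed by $\D_2$, while the arc along $D_3$ contributes those of $\D_3$ with the opposite sign (the two segments are traversed in opposite senses relative to $H$, owing to the cyclic orientation of $\gamma$). The arc near infinity contributes $\m_{D_1}$ together with a correction $R$, which is a $\ZZ$-linear combination of stars of meridians at singular points of $\A$ lying on the support $D_1\cup D_2\cup D_3$. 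Schematically,
\begin{equation*}
	[\tilde\gamma] = \sum_{D\in\D_2}\m_D - \sum_{D\in\D_3}\m_D + \m_{D_1} + R .
\end{equation*}
Applying $\xi$: the inner-cyclic hypothesis $\prod_{D\ni P}\xi(\m_D)=1$ at each singular point $P$ on the support, together with $\xi(\m_{D_i})=1$ for $i=1,2,3$, forces $\xi(\m_{D_1})=1$ and $\xi(R)=1$. Using Remark~\ref{rmk:D2D3} to write $\D_3=\D_2\sqcup(\D_1\cap\D_3)$, the $\D_2$-contributions cancel, leaving
\begin{equation*}
	\I(\A,\xi,\gamma) = \xi\Big(\sum_{D\in\D_1\cap\D_3}\m_D\Big)^{-1},
\end{equation*}
which is the claimed formula.

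The main obstacle is the wiring-diagram bookkeeping in the middle step: one must pin down the exact sign of the meridian contributions coming from the $D_2$- and $D_3$-segments (the opposite signs encode the orientation of $\gamma$ together with the choice of pushing side $H$), and one must verify that the correction $R$ at the singular points on the support really has the required ``star'' form, so that the inner-cyclic hypothesis annihilates $\xi(R)$. Once these orientations and cancellations are carefully matched with the algorithm of~\cite{AFG:invariant}, the reduction to $\D_1\cap\D_3$ via Remark~\ref{rmk:D2D3} and the final application of $\xi$ are immediate.
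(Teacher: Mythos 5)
Your proposal is correct and follows essentially the same route as the paper: both reduce to the intermediate identity $\I(\A,\xi,\gamma)=\xi\bigl(\sum_{D\in\D_2}\m_D-\sum_{D\in\D_3}\m_D\bigr)$ via the wiring-diagram computation of~\cite[Section~4]{AFG:invariant} (the paper invokes Lemma~4.2 there for exactly the sign bookkeeping you flag as the main obstacle, with the indeterminacy of the lift absorbed by the inner-cyclic condition), and both conclude by the cancellation $\D_3\setminus\D_2=\D_1\cap\D_3$ from Remark~\ref{rmk:D2D3}.
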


\begin{proof}
The proof is based on an adaptation of the proof of Proposition 4.3 in~\cite{AFG:invariant} to the case of triangular inner-cyclic arrangement which are also real complexified. First, we prove that
\begin{equation}\label{eq:computation}
	\I(\A,\xi,\gamma) = \xi\Big(  \sum\limits_{D\in\D_2} \m_D - \sum\limits_{D'\in\D_3} \m_{D'} \Big),
\end{equation}
which is an adaptation to our case of Lemma 4.2 in~\cite{AFG:invariant}.

We denote as in~\cite[Section 4]{AFG:invariant} $\beta_{0,u}$ the braid obtained from the singular braid defined by $\big( D'_1\cup\cdots\cup D'_n \big) \cap H$, by exchanging the singular points by the corresponding local oriented half-twist. Also denote $\beta_{u,0}$ the mirror braid of $\beta_{0,u}$ reversing the orientation. Let $a_{j,s}(\beta)$ be the number (counting with sign) of crossings of the $j^\text{th}$ string over the $s^\text{th}$ string in $\beta$. Lemma 4.2 in~\cite{AFG:invariant} states that there exists a cycle $\gamma$ such that
\begin{equation*}
	[\gamma]=\sum\limits_{j=4}\limits^n a_{j,2}(\beta_{0,u})\cdot \m_{D_j} + \sum\limits_{j=4}\limits^n a_{j,3}(\beta_{u,0})\cdot \m_{D_j} \in \HH_1(\PC^2\setminus \A_\gamma^c).
\end{equation*}
In our case, the strings over-crossing the string associated to $D'_2$ (resp. $D'_3$) correspond to the lines intersecting $D'_2$ (resp. $D'_3$) in $H$ with smaller slope than $D'_2$ (resp. $D'_3$), that is, exactly the set $\D_2$ (resp. $\D_3$). Each string over-crosses at most once the string associated with $D'_2$ (resp. $D'_3$) since they come from the intersection of the corresponding line with $D_2$ (resp. $D_3$). Finally, the minus sign before the second sum in Equation~(\ref{eq:computation}) is due to the fact that $\beta_{u,0}$ is the mirror of $\beta_{0,u}$ and that the orientation of the crossings are inverted.

To obtain the result, the inclusion of $\D_2$ in $\D_3$ implies that for any $j=4,\ldots,n$ if $a_{j,3}(\beta_{u,0})\neq 0$ then $a_{j,2}(\beta_{0,u})=-a_{j,3}(\beta_{u,0})$. Thus, all terms of the first sum are canceled by terms of the second one in Equation~(\ref{eq:computation}). As $\D_3\setminus\D_2=\D_1\cap\D_3$, the result follows.
\end{proof}

\begin{example}\label{ex:invariant_computation}
	Let $\A$ be the dual arrangement of the $(3,2)$-configuration in Figure~\ref{fig:config_examples}~(A). Its affine real drawn is given in Figure~\ref{fig:Ceva7}, together with the half-plan $H$. We consider the character $\xi$ sending $\m_{D_i}$ to $-1$, for $i=4,\dots,7$. Then $(\A,\xi,\gamma)$ is a triangular inner-cyclic arrangement. In order to compute $\I(\A,\xi,\gamma)$, we need to compute the image of $\gamma$ in $\HH_1(\PC^2\setminus\A_\gamma^c)$. We have $\D_1=\set{D'_6,D'_7}$, $\D_2=\set{D'_5}$ and $\D_3=\set{D'_5,D'_7}$, therefore $[\gamma]=-\m_{D_7}$. Finally, $\I(\A,\xi,\gamma)=\xi(-\m_{D_7}) = -1$.
\end{example}

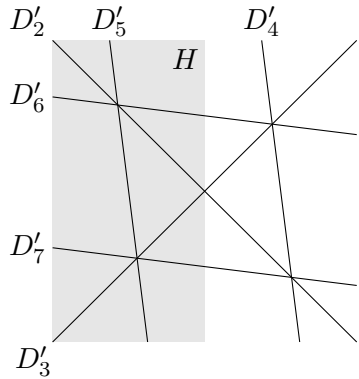
\begin{figure}[h]
	\begin{tikzpicture}
		\begin{scope}[scale=1]
		\fill[color=gray!20] (-2,2) -- (0,2) -- (0,-2) -- (-2,-2);
		
		\draw (-1.25,2) -- (-0.75,-2);
		\draw (0.75,2) -- (1.25,-2);
		\draw (-2,1.25) --(2,0.75);
		\draw (-2,-0.75) --(2,-1.25);
		\draw (-2,-2) -- (2,2);
		\draw (-2,2) -- (2,-2);
		
		\node at (0.75,2.25) {$D'_4$};
		\node at (-1.25,2.25) {$D'_5$};
		\node at (-2.33,2.25) {$D'_2$};
		\node at (-2.33,1.25) {$D'_6$};
		\node at (-2.33,-0.75) {$D'_7$};
		\node at (-2.25,-2.25) {$D'_3$};		
		\node at (-0.25,1.75) {$H$};		
	\end{scope}
\end{tikzpicture}
	\caption{Dual arrangement of the configuration given in Figure~\ref{fig:config_examples}~(A)\label{fig:Ceva7}}
\end{figure}

\subsection{Relation with the chamber weight and proof of the topological invariance}\mbox{}

\subsubsection{$(3,m)$-configuration vs triangular inner-cyclic arrangement}

We show how to relate the $\I$-invariant with $\tau(\C)$. In all this section we consider $\C=(\V,\S,\L,\pl)$ a planar $(3,m)$-configuration, $(\A^\V,\A^\S,\xi)$ its DPA, and $\gamma$ a cycle contained in $\A^\V$. 

\begin{propo}
	The triplet $(\A^\C,\xi,\gamma)$ is a triangular inner-cyclic arrangement.
\end{propo}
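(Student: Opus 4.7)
The plan is to verify each of the defining conditions of a triangular inner-cyclic arrangement directly, translating the structure of the $(3,m)$-configuration $\C$ into the dual picture via the dictionary that turns collinearity in $\PR^2$ into concurrence in $\PC^2$.

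First I would check that $V_1^\bullet, V_2^\bullet, V_3^\bullet$ are in general position and that each pairwise intersection $P_{ij} = V_i^\bullet \cap V_j^\bullet$ is a double point of $\A^\C$. General position follows from the planarity of $\C$: since $V_1,V_2,V_3$ span $\PR^2$, their three duals have no common point. The fact that $P_{ij}$ has multiplicity two is exactly condition (1) of Definition~\ref{def:configuration} read dually: $\S \cap (V_i,V_j) = \emptyset$ means that no line $S^\bullet$ passes through $P_{ij}$, hence only $V_i^\bullet$ and $V_j^\bullet$ meet there. A cycle $\gamma$ on the support with the required topological properties then exists by a standard position argument, concatenating small arcs on the three $V_i^\bullet$'s joining the $P_{ij}$'s and avoiding the other singularities.

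Next, the condition $\xi(\m_D) = 1 \Leftrightarrow D \in \{V_1^\bullet,V_2^\bullet,V_3^\bullet\}$ is immediate from the definition of the DPA combined with $\V = \pl^{-1}(0)$: on the support $\xi$ takes value $1$ by definition, and for $S \in \S$ we have $\pl(S) \neq 0$ in $\ZZ_m$, so $\exp(2\pi i\,\pl(S)/m) \neq 1$. The main content is then the inner-cyclic condition: for every singular point $P$ of $\A^\C$ lying on $V_1^\bullet \cup V_2^\bullet \cup V_3^\bullet$ one must check $\prod_{D \ni P} \xi(\m_D) = 1$. For the triangle vertices $P = P_{ij}$ this is trivial since both incident lines carry $\xi$-value $1$. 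Otherwise $P$ lies on a unique support line $V_i^\bullet$, and dually $P^\bullet$ is a line $L$ through $V_i$ containing at least one surrounding point; by the definition of $\L$ and Remark~1.6(2), such $L$ is in $\L$ and contains exactly the vertex $V_i$. The lines of $\A^\C$ through $P$ are then $V_i^\bullet$ together with $\{S^\bullet : S \in L \cap \S\}$, and condition (2) of Definition~\ref{def:configuration} yields $\sum_{S \in L \cap \S} \pl(S) \equiv 0 \pmod{m}$, so $\prod_{D \ni P} \xi(\m_D) = 1 \cdot \exp\!\bigl(\tfrac{2\pi i}{m}\sum_{S \in L \cap \S}\pl(S)\bigr) = 1$.

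There is no genuine obstacle in this verification; the only point requiring a moment of care is the bookkeeping step of matching every non-triangle singular point on the support with a line of $\L$ so that condition (2) can be invoked. This identification is precisely what the definition of $\L$ and Remark~1.6(2) guarantee, and once it is in hand the proof is a line-by-line translation from Definition~\ref{def:configuration} to the definition of a triangular inner-cyclic arrangement.
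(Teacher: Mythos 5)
Your proof is correct and follows essentially the same route as the paper: dualize condition (1) of Definition~\ref{def:configuration} to get the genericity of the support and the double points $V_i^\bullet\cap V_j^\bullet$, identify the remaining singular points on the support with the duals of the lines $L\in\L$, and invoke condition (2) to get $\prod_{D\ni P}\xi(\m_D)=\exp\bigl(\tfrac{2\pi i}{m}\sum_{S\in L\cap\S}\pl(S)\bigr)=1$. You are in fact slightly more careful than the paper in spelling out that $\xi(\m_{S^\bullet})\neq 1$ for $S\in\S$ (which follows from $\V=\pl^{-1}(0)$) and in treating the triangle vertices separately, but these are refinements of the same argument rather than a different one.
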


\begin{proof}
	The aim here is to prove that the support of the DPA is also the support of the cycle~$\gamma$.
	
	Since the configuration $\C$ is planar, the arrangement $\A^\V$ is generic. By Definition~\ref{def:configuration}~(1), for $i,j\in\set{1,2,3}$, the singular points $V_i^\bullet\cap V_j^\bullet$ are of multiplicity 2 in $\A^\C$. The singular points of $\A^\C$ contained in $\A^\V$ are exactly the (complexified) dual of the lines $L\in\L$ (see Definition~\ref{def:configuration}), we denote them by $L^\bullet$. Finally, Definition~\ref{def:configuration}~(2) implies that for all $L^\bullet$ we have
	\begin{equation*}
		\prod\limits_{S^\bullet\ni L^\bullet} \xi(\m_{S^\bullet}) = 
		\exp\left(\frac{2 \pi i }{m}\sum\limits_{S\in L\cap\S}\pl(S)\right) = 1.\qedhere
	\end{equation*}
\end{proof}

\begin{rmk}
	The converse is also true. That is, if $(\A,\xi,\gamma)$ is a triangular inner-cyclic complexified real arrangement then its dual configuration of points admits a structure of $(3,m)$-configuration.
\end{rmk}

\begin{thm}\label{thm:relation}
	Let $\C$ be a $(3,m)$-configuration and $(\A^\C,\xi,\gamma)$ its associated triangular inner-cyclic arrangement. The chamber weight of $\C$ and the $\I$-invariant of $(\A^\C,\xi,\gamma)$ are related by the formula
	\begin{equation*}
		\I(\A^\C,\xi,\gamma) = \exp\left(\frac{-2 \pi i}{m}\tau(\C)\right).
	\end{equation*}
\end{thm}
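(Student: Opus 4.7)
The plan is to derive the identity directly from the explicit wiring-diagram formula of Proposition~\ref{propo:computation}, after identifying the set $\D_1\cap\D_3$ appearing there with the surrounding points of $\C$ lying in one particular chamber of $\PR^2$. Both sides of the asserted equality are invariant under ordered projective transformations of $\PR^2$ ($\I$ is an ordered topological invariant, and $\tau(\C)$ is independent of the chamber by Proposition~\ref{propo:tau_constant}), so I am free to normalize coordinates once.

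First I would place the three vertices at $V_1=(0:0:1)$, $V_2=(1:1:0)$, $V_3=(1:-1:0)$, so that dually $V_1^\bullet=\{z=0\}$, $V_2^\bullet=\{x+y=0\}$, $V_3^\bullet=\{x-y=0\}$, exactly the setup of Proposition~\ref{propo:computation} (with $V_1^\bullet$ as line at infinity and support slopes $\pm 1$). For $S=(a:b:c)\in\S$, Definition~\ref{def:configuration}(1) forces $c\neq 0$ and $|a|\neq |b|$, so after rescaling I may assume $c=1$; the affine trace $S^\bullet\cap\RR^2=\{ax+by+1=0\}$ has slope $-a/b$ and meets $\{x=y\}$ at $x=-1/(a+b)$. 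The defining conditions for $\D_1\cap\D_3$ (slope in $(-1,1)$ and intersection point with $D_3'$ in $H=\{x<0\}$) therefore read $|a|<|b|$ and $a+b>0$, which together are equivalent to $b>|a|$. But the four chambers cut in $\PR^2$ by the sides $\{u=v\}$, $\{u=-v\}$, $\{w=0\}$ of the triangle $V_1V_2V_3$ are described, in the affine chart $w=1$, by $v>|u|$, $v<-|u|$, $u>|v|$, $u<-|v|$, and the normalization $c=1$ places $S$ in the chamber $\ch=\{v>|u|\}$ precisely when $b>|a|$. Thus
\begin{equation*}
\D_1\cap\D_3=\{\,S^\bullet\mid S\in\S\cap\ch\,\}.
\end{equation*}

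Applying $\xi$ and using $\xi(\m_{S^\bullet})=\exp(2\pi i\,\pl(S)/m)$, Proposition~\ref{propo:computation} yields
\begin{equation*}
\I(\A^\C,\xi,\gamma)=\xi\!\Bigl(\sum_{S\in \S\cap\ch}\m_{S^\bullet}\Bigr)^{-1}=\exp\!\Bigl(-\tfrac{2\pi i}{m}\sum_{S\in\S\cap\ch}\pl(S)\Bigr)=\exp\!\Bigl(-\tfrac{2\pi i}{m}\tau(\C)\Bigr),
\end{equation*}
where the last equality invokes Proposition~\ref{propo:tau_constant} to identify the chamber sum with $\tau(\C)$. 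The main delicate point is the coordinate computation of the previous paragraph: one must keep track of sign choices (direction of the cycle $\gamma$, the scaling $c>0$, the orientation conventions in Proposition~\ref{propo:computation}) to ensure that $\D_1\cap\D_3$ corresponds to an honest chamber of the triangle rather than some artifact, and that no spurious sign appears in the exponent. Once this identification is pinned down, the rest is an immediate application of the definition of the character $\xi$.
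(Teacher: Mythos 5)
Your proposal is correct and follows essentially the same route as the paper's proof: both reduce the statement to Proposition~\ref{propo:computation} and then identify the duals of the lines in $\D_1\cap\D_3$ with the surrounding points $\S\cap\ch_i$ of a single chamber, before applying the definition of the character $\xi$ and Proposition~\ref{propo:tau_constant}. The only difference is presentational: where the paper argues that the half-plane condition and the slope condition each cut out a cone whose intersection is a chamber, you verify the same identification by an explicit coordinate computation ($b>|a|$ on both sides), which correctly handles the sign and normalization issues you flag.
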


\begin{proof}
	By Proposition~\ref{propo:computation}, we need to show that the set $\D_1\cap \D_3$ corresponds to dualizing the set $\S\cap\ch_i$, for a chamber in $\set{\ch_1,\ch_2,\ch_3,\ch_4}$. Recall that
	\begin{equation*}
		\D_1\cap\D_3 = \set{ D \in \A\ \mid\ D'\cap D_3 \in H,\ -1<\Sl(D')<1}.
	\end{equation*}
	A line $D$ such that $D'\cap D'_3$ lies in $H$ corresponds to a dual point $P$, i.e. $P^\bullet=D$, is contained in $\mathfrak{c}_3$, one of the two cones of $\PR^2$ defined by the lines $(V_1,V_3)$ and $(V_2,V_3)$. In the same way, a line $D$ such that $-1<\Sl(D')<1$ corresponds to a dual point contained in $\mathfrak{c}_1$, one of the two cones of $\PR^2$ defined by the lines $(V_1,V_2)$ and $(V_1,V_3)$. 
	
	The intersection of these two cones $\mathfrak{c}_1$ and $\mathfrak{c}_3$ is exactly one of the chambers $\ch_i$ (see Figure~\ref{fig:chamber}). In addition, the lines in $\D_1\cap\D_3$ are lines of $\A$, then their dual points are in $\V\sqcup\S$. But they are distinct from $V_i^\bullet$, for $i=1,2,3$, thus they are in $\S$. Finally, we obtain that the dual of $\D_1\cap\D_3$ is $\S\cap\ch_i$. 
	
	Since $\tau(\C)$ is defined as the sum of the plumbing of the surrounding-points contained in a chamber $\ch_i$, then the statement follows from the definition of the character $\xi$ on the DPA. 
\end{proof}

\begin{figure}[h!]
\begin{tikzpicture}
 \fill[pattern color=gray!50, pattern=north west lines] (-1,0) -- (-2,0) -- (-2,-1);
 \fill[pattern color=gray!50, pattern=north west lines] (-1,0) -- (1,2) -- (2,0);
 \fill[pattern color=gray!50, pattern=north east lines] (1,0) -- (2,0) -- (2,-1);
 \fill[pattern color=gray!50, pattern=north east lines] (1,0) -- (-1,2) -- (-2,0);
 \node at (-1,0) {$\bullet$};
 \node at (1,0) {$\bullet$};
 \node at (0,1) {$\bullet$}; 
 \node[below right] at (-1.1,0) {$V_1$};
 \node[below left] at (1.1,0) {$V_3$};
 \node[above] at (0,1.1) {$V_2$};
 \node at (0,0.33) {$\ch_i$};
 \node at (1,1) {$\mathfrak{c}_1$};
 \node at (-1,1) {$\mathfrak{c}_3$};
 \draw (-2,-1) -- (1,2);
 \draw (-2,0) -- (2,0);
 \draw (-1,2) -- (2,-1);
\end{tikzpicture}
	\caption{Intersection of two sectors.\label{fig:chamber}}
\end{figure}
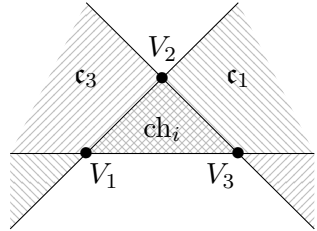

\subsubsection{An alternative proof of Proposition~\ref{propo:tau_constant}}

The above relation gives an alternative topological proof of the fact that the chamber weight $\tau(\C)$ is well defined.


	Indeed, in the proof of Theorem~\ref{thm:relation}, we showed that for a particular choice of a cycle $\gamma$, the $\I$-invariant is obtained by the chamber weight of a given chamber. In fact, this chamber is determined by the orientation of $\gamma$ and the line of its support which is taken as line at infinity in our construction of $\D_1$, $\D_2$ and $\D_3$.  As the value of the $\I$-invariant does not depend on the choice of $\gamma$ and the line at infinity which determines $\D_1$, $\D_2$ and $\D_3$, we deduce that all chamber weights are equal.

\begin{rmk}
	As a consequence of Proposition~\ref{propo:tau_constant}, we only consider configurations with a plumbing in $\ZZ/m\ZZ$ with $m$ even. Thus, the character $\xi$ of the DPA also have an even order. Since $\I(\A^\C,\xi,\gamma)^{-1}=\I(\A^\C,\xi,\gamma^{-1})$, the order of the character $\xi$ implies that $\I$ is independent of the orientation of $\gamma$ in the case of a DPA.
\end{rmk}

\subsubsection{Proofs of Theorem~\ref{thm:invariant} and of Corollary~\ref{cor:invariant}}

From Theorem~\ref{thm:relation} and \cite[Theorem 2.1]{AFG:invariant}, the value  $\tau(\C)$ is an invariant of the ordered and oriented topology of $\A^\C$. Since the arrangement considered are complexified real arrangement, then they are invariant by complex conjugation (i.e. $\overline{\A^\C}=\A^\C$), we can thus apply~\cite[Corollary 2.6]{AFG:invariant}. Remark that this implies that if $m$ is even then $\tau(\C)$ is $0$ or $m/2$ in $\ZZ/m\ZZ$; and if $m$ is odd then $\tau(\C)$ is zero, as it was already proven in Proposition~\ref{propo:tau_constant}. Therefore by~\cite[Theorem 4.19]{ACCM:real_ZP} (or equivalently~\cite[Lemma 4.12]{Guerville:ZP}), we can remove the orientation condition and obtain Theorem~\ref{thm:invariant}.

In order to obtain Corollary~\ref{cor:invariant}, it is enough to observe that the value of $\tau(\C)$ is invariant by any automorphism of the combinatorics of $\C$ when the configuration is stable and uniform. Indeed, in such a case, $\#(\S\cap \ch_i)$ is unchanged by any automorphism of the combinatorics since the configuration is stable. As the plumbing is constant on $\S$, we have
\begin{equation*}
	\tau(\C) = \sum\limits_{S\in\S\cap \ch_i} \pl(S) = \zeta \cdot \#(\S\cap \ch_i),
\end{equation*}
where $\zeta=\pl(S)$ for any $S\in\S$.

\subsection{Study of characteristic varieties}\mbox{}

For the definition of the quasi-projective part of the characteristic varieties and how to compute it from the $\I$-invariant and the combinatorics, we refer to~\cite[Section 3]{AFG:invariant}. In~\cite{artal:char_var}, Artal proves that the characteristic varieties are not determined by the weak combinatorics, i.e. by the number of lines, the number of singular points on each line and their multiplicities. More precisely, he gives a negative answer to Orlik's conjecture which states that
\begin{equation*}
	b_1(\F_\A) = b_1(\PC^2\setminus\A) \quad \text{ and } \quad b_2(\F_\A)=b_2(\PC^2\setminus\A)+ (m-1) \chi(\PC^2\setminus\A),
\end{equation*}
where $\F_\A$ is the Milnor fiber of $\A$. In order to obtain this result, he computes the corresponding Betti numbers for two classical examples of arrangements.

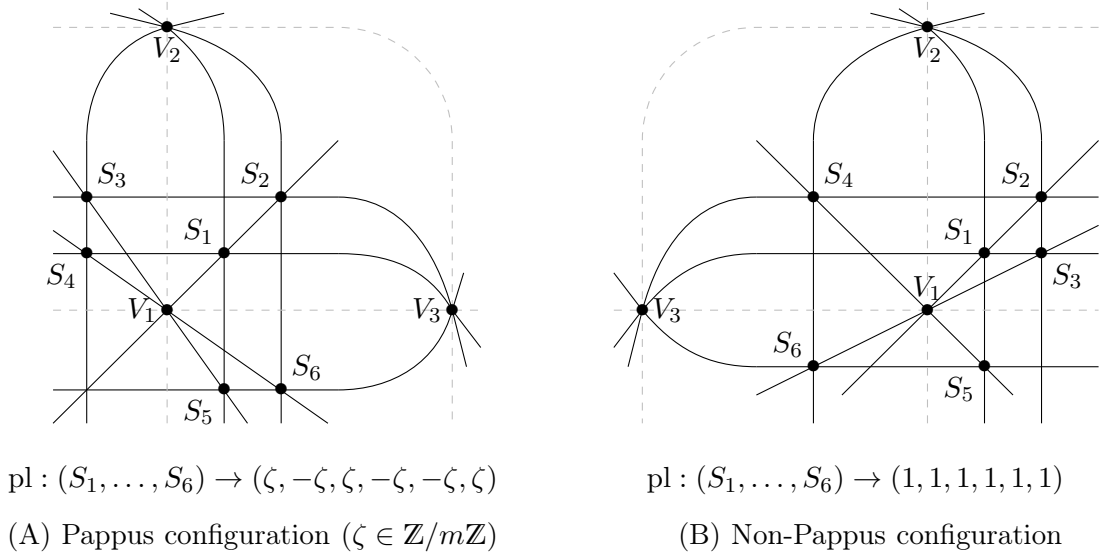
\begin{figure}[h!]
	\begin{tikzpicture}
		\begin{scope}[shift={(-5,0)},scale=0.75]
	 \draw (-1.41,-2) -- (-1.41,3);
	 \draw (1,-2) -- (1,3);
	 \draw (2,-2) -- (2,3);
	 
 	\draw (-1.41,3) to[out=90, in=-165] (0,5) -- (1,5.25);
	\draw (1,3) to[out=90, in=-40] (0,5) -- (-0.5,5.33);
	\draw (2,3) to[out=90, in=-15] (0,5) -- (-1,5.25);
	
	 \draw (-2,-1.41) --(3,-1.41);
	 \draw (-2,1) --(3,1);
	 \draw (-2,2) --(3,2);
	 
	\draw (3,2) to[out=0, in=105] (5,0) -- (5.25,-1);
	\draw (3,1) to[out=0, in=120] (5,0) -- (5.5,-0.66);
	\draw (3,-1.41) to[out=0, in=-105] (5,0) -- (5.2,0.66);
	 
	 \draw (-2,-2) -- (3,3);
	 \draw (-2,2.82) -- (1.41,-2);
	 \draw (-2,1.41) -- (2.82,-2);
	 
	 \draw[color=gray!50, dashed] (0,-2) -- (0,5.5);
	 \draw[color=gray!50, dashed] (-2,0) -- (5.5,0);
	 \draw[color=gray!50, dashed] (-2,5) -- (3,5) to[out=0,in=90] (5,3) --(5,-2);
	 
	 \node at (0,0) {$\bullet$};
	 \node[left] at (0,0) {$V_1$};
	 \node at (0,5) {$\bullet$};
	 \node[below] at (0,5) {$V_2$};
	 \node at (5,0) {$\bullet$};
	 \node[left] at (5,0) {$V_3$};
	 
	 \node at (1,1) {$\bullet$};
	 \node[above left] at (1,1) {$S_1$};
	 \node at (2,2) {$\bullet$};
	 \node[above left] at (2,2) {$S_2$};
	 \node at (-1.41,2) {$\bullet$};
	 \node[above right] at (-1.41,2) {$S_3$};
	 \node at (-1.41,1) {$\bullet$};
	 \node[below left] at (-1.41,1) {$S_4$};
	 \node at (1,-1.41) {$\bullet$};
	 \node[below left] at (1,-1.41) {$S_5$};
	 \node at (2,-1.41) {$\bullet$};
	 \node[above right] at (2,-1.41) {$S_6$};
	 
 	\node at (1.5,-3) {$\pl:(S_1,\dots,S_6) \rightarrow (\zeta,-\zeta,\zeta,-\zeta,-\zeta,\zeta)$};
	\node at (1.5,-4) {(A) Pappus configuration ($\zeta\in\ZZ/m\ZZ$)};
		\end{scope}
		\begin{scope}[shift={(5,0)},scale=0.75]
	\draw (-2,-2) -- (-2,3);
	\draw (1,-2) -- (1,3);
	\draw (2,-2) -- (2,3);
	
	\draw (-2,3) to[out=90, in=-165] (0,5) -- (1,5.25);
	\draw (1,3) to[out=90, in=-40] (0,5) -- (-0.5,5.33);
	\draw (2,3) to[out=90, in=-15] (0,5) -- (-1,5.25);
	
	\draw (-3,-1) -- (3,-1);
	\draw (-3,1) -- (3,1);
	\draw (-3,2) -- (3,2);

	\draw (-3,2) to[out=-180, in=75] (-5,0) -- (-5.25,-1);
	\draw (-3,1) to[out=-180, in=50] (-5,0) -- (-5.5,-0.66);
	\draw (-3,-1) to[out=-180, in=-50] (-5,0) -- (-5.5,0.66);
	
	\draw (-3,3) -- (1.5,-1.5);
	\draw (-3,-1.5) -- (3,1.5);
	\draw (-1.5,-1.5) --( 3,3);
	
	 \draw[color=gray!50, dashed] (0,-2) -- (0,5.5);
	 \draw[color=gray!50, dashed] (-5.5,0) -- (3,0);
	 \draw[color=gray!50, dashed] (3,5) -- (-3,5) to[out=180,in=90] (-5,3) -- (-5,-2);
	
	\node at (0,0) {$\bullet$};
	\node[above] at (0,0) {$V_1$};
	\node at (0,5) {$\bullet$};
	\node[below] at (0,5) {$V_2$};
	\node at (-5,0) {$\bullet$};
	\node[right] at (-5,0) {$V_3$};
	
	\node at (1,1) {$\bullet$};
	\node[above left] at (1,1) {$S_1$};
	\node at (2,2) {$\bullet$};
	\node[above left] at (2,2) {$S_2$};
	\node at (2,1) {$\bullet$};
	\node[below right] at (2,1) {$S_3$};
	\node at (-2,2) {$\bullet$};
	\node[above right] at (-2,2) {$S_4$};
	\node at (1,-1) {$\bullet$};
	\node[below left] at (1,-1) {$S_5$};
	\node at (-2,-1) {$\bullet$};
	\node[above left] at (-2,-1) {$S_6$};
	
	\node at (-1.25,-3) {$\pl:(S_1,\dots,S_6)\rightarrow (1,1,1,1,1,1)$};
	\node at (-1,-4) {(B) Non-Pappus configuration};
		\end{scope}
	\end{tikzpicture}
	\caption{Pappus and non-Pappus $(3,2)$-configurations with chamber weight 0 and 1, respectively.\label{fig:Pappus_nonPappus}}
\end{figure}

Combining the results in~\cite{artal:position} with the fact that a triplet $(\A^\V,\A^\S,\xi)$ is the DPA of a $(3,m)$-configuration $\C$ together with the relation between the $\I$-invariant and the chamber weight (Theorem~\ref{thm:relation}), we obtain that the quasi-projective depth of $\xi$ is given by
\begin{equation*}
	\depth(\xi) = \corank
		\begin{pmatrix}
			1-l_1 & 1 & 1 \\
			1 & 1-l_2 & j \\
			1 & j & 1-l_3
		\end{pmatrix}
		, \text{ with }
		j = \left\{
			\begin{array}{ll}
				1 & \text{if }\tau(\C)=0 \\
				-1 & \text{if }\tau(\C)=m/2 \\
			\end{array}
		\right. ,
\end{equation*}
and $l_i$ is the number of lines in $\L$ passing through $V_i$, for $i=1,2,3$. Using this formula, we have an alternative proof of the result of Artal~\cite{artal:char_var}. Indeed, if we consider the $(3,2)$-configurations given in Figure~\ref{fig:Pappus_nonPappus} (called the Pappus and the non-Pappus configurations), we obtain that: 
\begin{itemize}
	\item For any character $\xi$ of the Pappus arrangement, we have $\depth(\xi)>0$,
	\item If $(\A^\V,\A^\S,\xi)$ is the DPA of the non-Pappus configuration, then $\depth(\xi)=0$.
\end{itemize}
The Pappus and the non-Pappus arrangements have the same weak combinatorics, that is, they have 9 lines, 9 triple points and 3 of them on each line. But they have not the same combinatorics. These arrangements are two of the three different possible realizations for this weak combinatorics (called \emph{configurations of type $(9_3)$} in~\cite[Sec.~2.2]{Grunbaum}). In fact, these two are the unique realizations which are triangular inner-cyclic arrangements. Thus, we obtain the following result.

\begin{propo}\label{propo:char_var}
	The quasi-projective part of the characteristic varieties of a real line arrangement is not determined by the weak combinatorics.
\end{propo}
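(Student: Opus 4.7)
The plan is to exhibit two complexified real line arrangements with identical weak combinatorics whose quasi-projective characteristic varieties differ, by comparing the quasi-projective depths of their DPA characters via Theorem~\ref{thm:relation} and the chamber-weight formula recalled just above the proposition.

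The two candidates are the dual plumbed arrangements $\A^\P$ and $\A^\N$ of the Pappus and non-Pappus $(3,2)$-configurations of Figure~\ref{fig:Pappus_nonPappus}. Both realize the classical $(9_3)$-incidence scheme (nine points, nine lines, three points on every line, three lines through every point), so by Proposition~\ref{propo:equivalence} each of $\A^\P$ and $\A^\N$ is an arrangement of nine lines with exactly nine triple singular points, three triple points on every line and no further singularities. This is precisely the weak combinatorial data of Remark~\ref{rmk:weak_combinatorics}, and it is identical for the two arrangements; what separates them is the full combinatorics, which for $\P$ realizes Pappus's collinearity and for $\N$ does not.

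Next, I would compute the chamber weights using Remark~\ref{rmk:parity}. After placing $V_1$ at the origin and $V_2,V_3$ at the two points at infinity of the standard axes of $\RR^2$, the four chambers become the open affine quadrants, and $\tau$ reduces to the parity of the number of surrounding-points in, say, the upper-right quadrant. Reading off the coordinates of $\S$ from Figure~\ref{fig:Pappus_nonPappus}, this quadrant contains two points of $\S$ in the Pappus layout but three in the non-Pappus one, whence $\tau(\P)=0$ and $\tau(\N)=1$ in $\ZZ_2$. In both configurations every vertex lies on exactly three lines of $\L$, so $l_1=l_2=l_3=3$. Substituting the corresponding values $j=+1$ and $j=-1$ into the $3\times 3$ matrix of the depth formula yields matrices of distinct coranks---the all-ones vector spans the kernel in one case, whereas the matrix is invertible in the other---so the DPA characters $\xi_\P$ and $\xi_\N$ have different quasi-projective depths.

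Finally, fixing any ordering of the nine lines identifies the character tori of $\A^\P$ and $\A^\N$ with a common copy of $(\CC^*)^8$, and the quasi-projective depth of a character is by construction the local contribution of the quasi-projective part of the characteristic variety at that character (this is the content of~\cite{artal:position}). The distinct depths of $\xi_\P$ and $\xi_\N$ therefore force the quasi-projective parts of the characteristic varieties of $\A^\P$ and $\A^\N$ to disagree as subvarieties of this common torus, even though the two arrangements share the same weak combinatorics. This establishes Proposition~\ref{propo:char_var}. The main subtlety is not in the chamber-weight computation, which is elementary after normalizing the vertices, but in the invocation of the depth formula from~\cite{artal:position}: one has to be sure that the corank appearing in that formula really is the dimension of the characteristic-variety eigenspace at the DPA character, so that different coranks genuinely detect a difference of characteristic varieties.
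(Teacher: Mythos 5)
Your strategy is the one the paper follows: compare the dual plumbed arrangements of the Pappus and non-Pappus $(3,2)$-configurations of Figure~\ref{fig:Pappus_nonPappus}, which share the weak combinatorics of a $(9_3)$ configuration, and separate them via the quasi-projective depth formula. Your chamber-weight computation ($\tau(\P)=0$, $\tau(\N)=1$, two versus three surrounding-points in the first quadrant) and the corank computation (with $l_1=l_2=l_3=3$ the matrix is singular for $j=+1$, with kernel spanned by the all-ones vector, and invertible for $j=-1$) are correct. Two small slips: a $(9_3)$ arrangement also has nine double points, not ``no further singularities'' (harmless, since both arrangements have them in the same distribution); and you never commit to which arrangement receives the positive depth --- taking the formula literally it is the configuration with $\tau=0$, i.e.\ the Pappus one, whose matrix is singular, so you should check this against whichever universal statement you end up needing below.

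The genuine gap is in the final inference. Knowing that the two DPA characters $\xi_\P$ and $\xi_\N$ have different depths does not show that the quasi-projective parts of the characteristic varieties differ, because the weak combinatorics supplies no canonical identification of the two character tori. Under an arbitrary ordering of the nine lines, $\xi_\N$ is carried to \emph{some} character of $\A^\P$ taking the value $-1$ on six lines and $+1$ on three, but those three lines need not be the support $V_1^\bullet,V_2^\bullet,V_3^\bullet$ of $\P$, so its depth is not the one you computed; and since every line of a $(9_3)$ arrangement has the same weak type, every bijection of the line sets is admissible, so you cannot single out one identification. To refute ``determined by the weak combinatorics'' you need an invariant of the characteristic variety that is independent of any such bijection. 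This is exactly how the paper closes the argument: it asserts that for \emph{every} character of the dual arrangement of one of the two configurations the quasi-projective depth vanishes, while the DPA character of the other has positive depth, so that one quasi-projective part is empty and the other is not --- an intrinsic, ordering-free distinction. Your proof needs this universal vanishing statement for one of the two arrangements (e.g.\ by running the depth formula over all admissible supports and plumbings, or by quoting the known computation of the characteristic varieties of the $(9_3)$ arrangements); as written, the ``common torus'' step does not close the argument.
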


Using the same argument for the $(3,m)$-configurations of our Zariski pairs, we can compute the projective depth of the character defined by each DPA and obtain the following result.
\begin{propo}
The quasi-projective part of the characteristic variety related to the dual arrangements of the configurations $\C_{\alpha,\beta}$, $\C_{\alpha,\beta}^1$, $\C_{\alpha,\beta}^2$, $\D_{\alpha,\beta}$, $\D_{\alpha,\beta}^1$, $\D_{\alpha,\beta}^2$, $\E_{\alpha,\beta,\gamma}$, $\E_{\alpha,\beta,\gamma}^1$, $\E_{\alpha,\beta,\gamma}^2$ and $\E_{\alpha,\beta,\gamma}^3$ are combinatorially determined.
\end{propo}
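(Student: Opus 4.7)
The strategy is to apply the depth formula recalled just above this proposition, which follows from Artal's work~\cite{artal:position} combined with Theorem~\ref{thm:relation}. Recall that for a $(3,m)$-configuration $\C$ with DPA character $\xi$, one has
\[
	\depth(\xi) = \corank\begin{pmatrix} 1-l_1 & 1 & 1 \\ 1 & 1-l_2 & j \\ 1 & j & 1-l_3 \end{pmatrix},
\]
where $l_i$ denotes the number of lines of $\L$ through the vertex $V_i$ and $j = \pm 1$ is determined by the value of $\tau(\C) \in \set{0,m/2}$. The crucial observation is that the $l_i$ depend only on the combinatorics of $\C$, while only the sign $j$ encodes the topologically-sensitive data. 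Hence, to prove that the quasi-projective part of the characteristic variety is combinatorially determined for each of the listed arrangements, it suffices to show that the corank of this matrix does not depend on the choice of $j \in \set{-1,1}$.

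The plan is first to tabulate, for each configuration in the statement, the triple $(l_1,l_2,l_3)$ directly from the combinatorics listed in Propositions~\ref{propo:ZP_comb}, \ref{propo:combi_ZP_1pt5}, \ref{propo:combi_ZP_2pts5}, Theorem~\ref{thm:ZP15} and Section~\ref{subsec:17lines}. For instance, $\C_{\alpha,\beta}$ has $(l_1,l_2,l_3)=(5,5,5)$, while the degenerations $\C^1_{\alpha,\beta}$ and $\C^2_{\alpha,\beta}$ have $(4,5,5)$ and $(4,4,5)$ respectively; the configurations $\D_{\alpha,\beta}$ yield $(6,6,6)$ and the configurations $\E_{\alpha,\beta,\gamma}$ yield $(7,7,7)$, with appropriate small decrements for the degenerations. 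Then, for each such triple, I would compute the determinant of the matrix in both cases $j=1$ and $j=-1$ and check that both determinants are non-zero; this gives corank zero in both cases, hence $\depth(\xi) = 0$ regardless of the sign.

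A brief sample computation illustrates the pattern: for $(l_1,l_2,l_3)=(5,5,5)$, the determinant equals $-4(16-j^2)-(−4−j)+(j+4)$, which reduces to $-50$ for $j=1$ and $-70$ for $j=-1$, both non-zero. An analogous elementary check works uniformly for each triple appearing in the list, and in every case the determinant is negative and non-vanishing for both values of~$j$. Consequently, the DPA characters contribute no quasi-projective component, and what remains of the quasi-projective part of the characteristic variety is governed by the combinatorial pieces (translated components arising from neighborly partitions, pencils, and similar data) which are visibly combinatorial invariants.

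The main obstacle is that Artal's formula, as recalled, directly controls only the characters coming from the DPA structure; one must also argue that any other torsion character with potentially positive depth arises either from a (combinatorially determined) pencil of the arrangement, or from a sub-DPA whose associated $(3,m)$-configuration falls under the same corank stability. This step relies on Libgober's description of components of characteristic varieties in terms of maps to orbifold curves, ensuring that no ``hidden'' character escapes the dichotomy \emph{combinatorial pencil vs.~DPA character}. Once this is in place, the finite list of corank computations indicated above yields the result.
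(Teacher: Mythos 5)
Your proposal follows essentially the same route as the paper: both apply the corank formula for $\depth(\xi)$, note that the $l_i$ are combinatorial while only the sign $j$ carries topological information, and reduce the claim to checking that the $3\times 3$ matrix is nonsingular for both $j=1$ and $j=-1$. The paper shortcuts your case-by-case determinant computations with a single uniform observation---since $l_1,l_2,l_3\ge 4$ for every configuration in the list, the matrix is strictly diagonally dominant and hence full rank for either value of $j$---and, as a minor point, your sample determinant for $(l_1,l_2,l_3)=(5,5,5)$ with $j=-1$ evaluates to $-54$ rather than $-70$, which does not affect the conclusion.
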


In the case of triangular inner-cyclic arrangements, the matrix in the previous formula giving $\depth(\xi)$ becomes a strictly diagonal dominant matrix when $l_1$, $l_2$ and $l_3$ are greater or equal to 4 (i.e. when we have at least 4 singular points lying in each line of the support). In such case, the matrix becomes full-rank and $\depth(\xi)$ is 0.  Almost all DPA of $(3,m)$-configuration have this property. In particular, it is the case of all our previous examples of Zariski pairs.

\subsection{Conclusion}\mbox{}

We have shown how the use of the $(3,m)$-configurations and the chamber weight can be efficient to study the topology of line arrangements. First, to detect new examples of Zariski pairs as done in Section~\ref{sec:ZP} and~\ref{sec:list}, but also to study the problem of the combinatoriality of characteristic varieties, as show with Proposition~\ref{propo:char_var}. Nevertheless, the following general question is still open:
\begin{center}
	\smallskip
		{ \it  Are the characteristic varieties combinatorially determined? }
	\smallskip
\end{center}

We could contemplate the use of this new approach, with the $(t,m)$-configurations and the chamber weight, to attempt to obtain an answer for this question.

\newpage
\appendix
\section{The $(3,2)$-configurations $\C_{\alpha,\beta}$ and their duals}\label{sec:appendix_ZP13}

In Section~\ref{sec:ZP}, we introduce four the $(3,m)$-configurations $\C_{\alpha,\beta}$ and their dual arrangements $\A^{\alpha,\beta}$, for $\alpha,\beta\in\{-1,1\}$, which composed Zariski pairs of 13 lines. We picture in this appendix detailed real pictures of both the configurations and dual arrangements. The dual line arrangements are pictured here up to rotation of their associated configurations, in order to obtain a more presentable figures.

\vfill
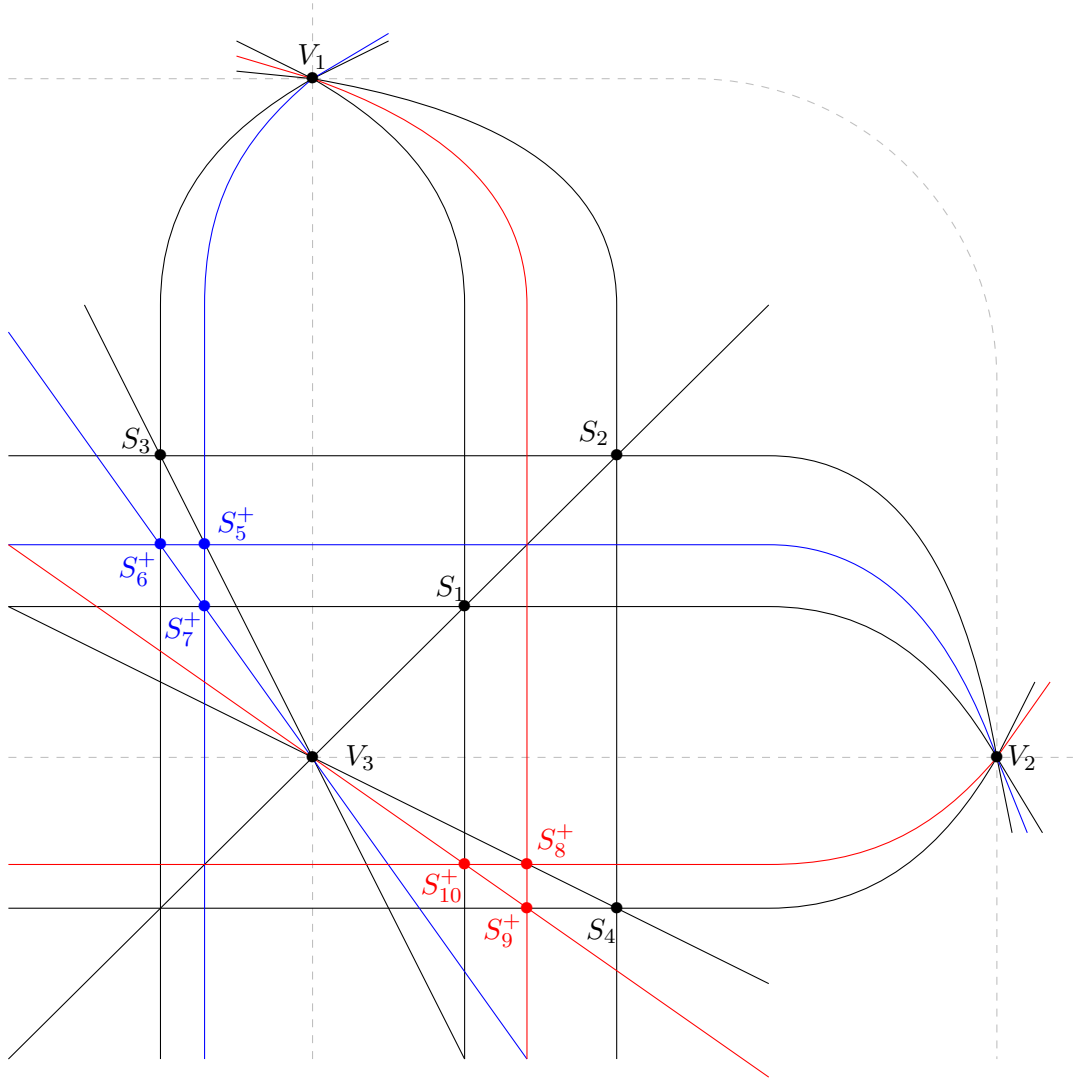
\begin{figure}[h]
	\centering	
	\begin{tikzpicture}[scale=2]
	\def\colorap{blue} 
	\def\coloram{teal}  
	\def\colorbp{red} 
	\def\colorbm{orange} 

		\draw[dashed, color=gray!50] (0,-2) -- (0,5) ;
		\draw[dashed, color=gray!50] (-2,0) -- (5,0) ;
		\draw[dashed, color=gray!50] (-2,4.5) -- (2.5,4.5) to[out=0,in=90] (4.5,2.5) -- (4.5,-2);

		\draw (-2,-1) -- (3,-1) to[out=0,in=-120] (4.5,0) -- (4.75,0.5);
		\draw (-2,2) -- (3,2) to[out=0,in=100] (4.5,0) -- (4.6,-0.5);
		\draw (-2,1) -- (3,1) to[out=0,in=120] (4.5,0) -- (4.8,-0.5);
		\draw (-1,-2) -- (-1,3) to[out=90,in=-150] (0,4.5) -- (0.5,4.75);
		\draw (1,-2) -- (1,3) to[out=90,in=-30] (0,4.5) -- (-0.5,4.75);
		\draw (2,-2) -- (2,3) to[out=90,in=-10] (0,4.5) -- (-0.5,4.55);
		\draw (-2,-2) -- (3,3);
		\draw (-2,1) -- (3,-1.5);
		\draw (-1.5,3) -- (1,-2);

		\draw[color=\colorap] (-2,1.41) -- (3,1.41) to[out=0,in=110] (4.5,0) -- (4.7,-0.5);
		\draw[color=\colorap] (-0.71,-2) -- (-0.71,3) to[out=90,in=-140] (0,4.5) -- (0.5,4.8);
		\draw[color=\colorap] (-2,2.82) -- (1.41,-2);


		\draw[color=\colorbp] (-2,-0.71) -- (3,-0.71) to[out=0,in=-130] (4.5,0) -- (4.85,0.5);
		\draw[color=\colorbp] (1.41,-2) -- (1.41,3) to[out=90,in=-20] (0,4.5) -- (-0.5,4.65);
		\draw[color=\colorbp] (-2,1.41) -- (3,-2.12);



		 \node at (0,0) {$\bullet$};
		 \node[right] at (0.15,0) {$V_3$};
		 \node at (0,4.5) {$\bullet$};
		 \node[above] at (0,4.5) {$V_1$};
		 \node at (4.5,0) {$\bullet$};
		 \node[right] at (4.5,0) {$V_2$};
	 
		 \node at (1,1) {$\bullet$}; \node at (0.91,1.12) {$S_1$};
		 \node at (2,2) {$\bullet$}; \node at (1.85,2.15) {$S_2$};
		 \node at (-1,2) {$\bullet$}; \node at (-1.16,2.1) {$S_3$};
		 \node at (2,-1) {$\bullet$}; \node at (1.9,-1.15) {$S_4$};
	 
		 \node[text=\colorap] at (-0.71,1.41) {$\bullet$}; \node[text=\colorap] at (-0.5,1.55) {$S_5^{+}$};
		 \node[text=\colorap] at (-1,1.41) {$\bullet$}; \node[text=\colorap] at (-1.15,1.25) {$S_6^{+}$};
		 \node[text=\colorap] at (-0.71,1) {$\bullet$};  \node[text=\colorap] at (-0.85,0.85) {$S_7^{+}$};
	 
		 \node[text=\colorbp] at (1.41,-0.71) {$\bullet$}; \node[text=\colorbp] at (1.6,-0.55) {$S_8^{+}$};
		 \node[text=\colorbp] at (1.41,-1) {$\bullet$}; \node[text=\colorbp] at (1.25,-1.15) {$S_9^{+}$};
		 \node[text=\colorbp] at (1,-0.71) {$\bullet$}; \node[text=\colorbp] at (0.85,-0.85) {$S_{10}^{+}$};
\end{tikzpicture}
	\caption{The $(3,2)$-configuration $\C_{1,1}$.\label{fig:dual_ZP_pp}}
\end{figure}
\vfill

\newpage\mbox{}

\vfill
\begin{figure}[h]
	\centering	
	\begin{tikzpicture}[scale=0.35]
\def\colorV{black}
\def\colorS{black}
\def\colorSa{blue}
\def\colorSb{red}
\coordinate (P1) at (-1, -20);
\coordinate (Q1) at (-1, 22);
\coordinate (P2) at (1, -20);
\coordinate (Q2) at (1, 22);
\coordinate (P3) at (-20, 0);
\coordinate (Q3) at (20, 0);
\coordinate (P4) at (-20, 1);
\coordinate (Q4) at (20, 1);
\coordinate (P5) at (-20, 4);
\coordinate (Q5) at (20, 4);
\coordinate (P6) at (17, -20);
\coordinate (Q6) at (-11, 22);
\coordinate (P7) at (-17, -20);
\coordinate (Q7) at (11, 22);
\coordinate (P8) at (-20, 71/4);
\coordinate (Q8) at (20, -49/4);
\coordinate (P9) at (49/5, -20);
\coordinate (Q9) at (-7, 22);
\coordinate (P10) at (89/5, -20);
\coordinate (Q10) at (-79/5, 22);
\coordinate (P11) at (-20, -49/4);
\coordinate (Q11) at (20, 71/4);
\coordinate (P12) at (-49/5, -20);
\coordinate (Q12) at (7, 22);
\coordinate (P13) at (-89/5, -20);
\coordinate (Q13) at (79/5, 22);
\draw[color=\colorV, line width=1] (P1)--(Q1) node[pos=-.05] {$V^\bullet_1$};
\draw[color=\colorV, line width=1] (P2)--(Q2) node[pos=-.05] {$V^\bullet_2$};
\draw[color=\colorV, line width=1] (P3)--(Q3) node[pos=-.05, yshift=-1] {$V^\bullet_3$};
\draw[color=\colorS, line width=1] (P4)--(Q4) node[pos=-.05, yshift=1] {$S^\bullet_1$};
\draw[color=\colorS, line width=1] (P5)--(Q5) node[pos=-.05] {$S^\bullet_2$};
\draw[color=\colorS, line width=1] (P6)--(Q6) node[pos=-.05, xshift=-1] {$S^\bullet_3$};
\draw[color=\colorS, line width=1] (P7)--(Q7) node[pos=-.05, xshift=1] {$S^\bullet_4$};
\draw[color=\colorSa, line width=1] (P8)--(Q8) node[pos=-.05] {$S^\bullet_5$};
\draw[color=\colorSa, line width=1] (P9)--(Q9) node[pos=-.05] {$S^\bullet_6$};
\draw[color=\colorSa, line width=1] (P10)--(Q10) node[pos=-.05, xshift=1] {$S^\bullet_7$};
\draw[color=\colorSb, line width=1] (P11)--(Q11) node[pos=-.05] {$S^\bullet_8$};
\draw[color=\colorSb, line width=1] (P12)--(Q12) node[pos=-.05] {$S^\bullet_9$};
\draw[color=\colorSb, line width=1] (P13)--(Q13) node[pos=-.05, xshift=-1] {$S^\bullet_{10}$};
\end{tikzpicture}
	\caption{The arrangement $\A^{1,1}$.\label{fig:ZP_pp}}
\end{figure}
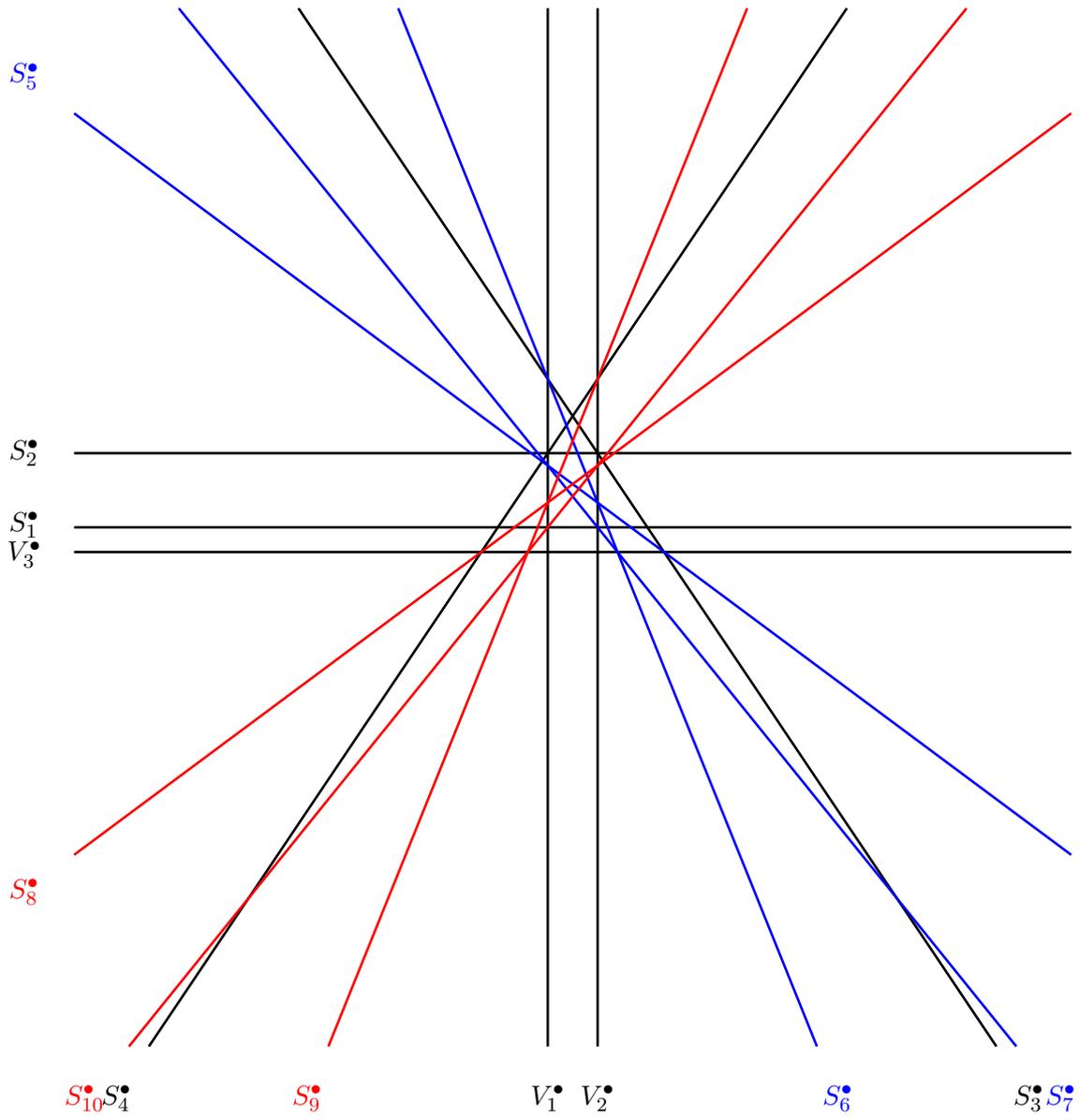
\vfill

\newpage\mbox{}

\vfill
\begin{figure}[h]
	\begin{tikzpicture}[scale=2]
	\def\colorap{blue} 
	\def\coloram{teal}  
	\def\colorbp{red} 
	\def\colorbm{orange} 

		\draw[dashed, color=gray!50] (0,-2) -- (0,5) ;
		\draw[dashed, color=gray!50] (-2,0) -- (5,0) ;
		\draw[dashed, color=gray!50] (-2,4.5) -- (2.5,4.5) to[out=0,in=90] (4.5,2.5) -- (4.5,-2);

		\draw (-2,-1) -- (3,-1) to[out=0,in=-120] (4.5,0) -- (4.75,0.5);
		\draw (-2,2) -- (3,2) to[out=0,in=100] (4.5,0) -- (4.6,-0.5);
		\draw (-2,1) -- (3,1) to[out=0,in=120] (4.5,0) -- (4.8,-0.5);
		\draw (-1,-2) -- (-1,3) to[out=90,in=-150] (0,4.5) -- (0.5,4.75);
		\draw (1,-2) -- (1,3) to[out=90,in=-30] (0,4.5) -- (-0.5,4.75);
		\draw (2,-2) -- (2,3) to[out=90,in=-10] (0,4.5) -- (-0.5,4.55);
		\draw (-2,-2) -- (3,3);
		\draw (-2,1) -- (3,-1.5);
		\draw (-1.5,3) -- (1,-2);

		\draw[color=\colorap] (-2,1.41) -- (3,1.41) to[out=0,in=110] (4.5,0) -- (4.7,-0.5);
		\draw[color=\colorap] (-0.71,-2) -- (-0.71,3) to[out=90,in=-140] (0,4.5) -- (0.5,4.8);
		\draw[color=\colorap] (-2,2.82) -- (1.41,-2);



		\draw[color=\colorbm] (-2,0.71) -- (3,0.71) to[out=0,in=130] (4.5,0) -- (4.9,-0.5);
		\draw[color=\colorbm] (-1.41,-2) -- (-1.41,3) to[out=90,in=-170] (0,4.5) -- (0.5,4.55);
		\draw[color=\colorbm] (-2,-1.41) -- (3,2.12);


		 \node at (0,0) {$\bullet$};
		 \node[right] at (0.15,0) {$V_3$};
		 \node at (0,4.5) {$\bullet$};
		 \node[above] at (0,4.5) {$V_1$};
		 \node at (4.5,0) {$\bullet$};
		 \node[right] at (4.5,0) {$V_2$};
	 
		 \node at (1,1) {$\bullet$}; \node at (0.91,1.12) {$S_1$};
		 \node at (2,2) {$\bullet$}; \node at (1.85,2.15) {$S_2$};
		 \node at (-1,2) {$\bullet$}; \node at (-1.16,2.1) {$S_3$};
		 \node at (2,-1) {$\bullet$}; \node at (1.9,-1.15) {$S_4$};
	 
		 \node[text=\colorap] at (-0.71,1.41) {$\bullet$}; \node[text=\colorap] at (-0.5,1.55) {$S_5^{+}$};
		 \node[text=\colorap] at (-1,1.41) {$\bullet$}; \node[text=\colorap] at (-1.15,1.25) {$S_6^{+}$};
		 \node[text=\colorap] at (-0.71,1) {$\bullet$};  \node[text=\colorap] at (-0.85,0.85) {$S_7^{+}$};
	 
		 \node[text=\colorbm] at (-1.41,0.71) {$\bullet$}; \node[text=\colorbm] at (-1.6,0.55) {$S_8^{-}$};
		 \node[text=\colorbm] at (-1.41,-1) {$\bullet$}; \node[text=\colorbm] at (-1.55,-0.85) {$S_9^{-}$};
		 \node[text=\colorbm] at (1,0.71) {$\bullet$}; \node[text=\colorbm] at (1.2,0.55) {$S_{10}^{-}$};
\end{tikzpicture}
	\caption{The $(3,2)$-configuration $\C_{1,-1}$.\label{fig:dual_ZP_pm}}
\end{figure}
\vfill

\newpage\mbox{}

\vfill
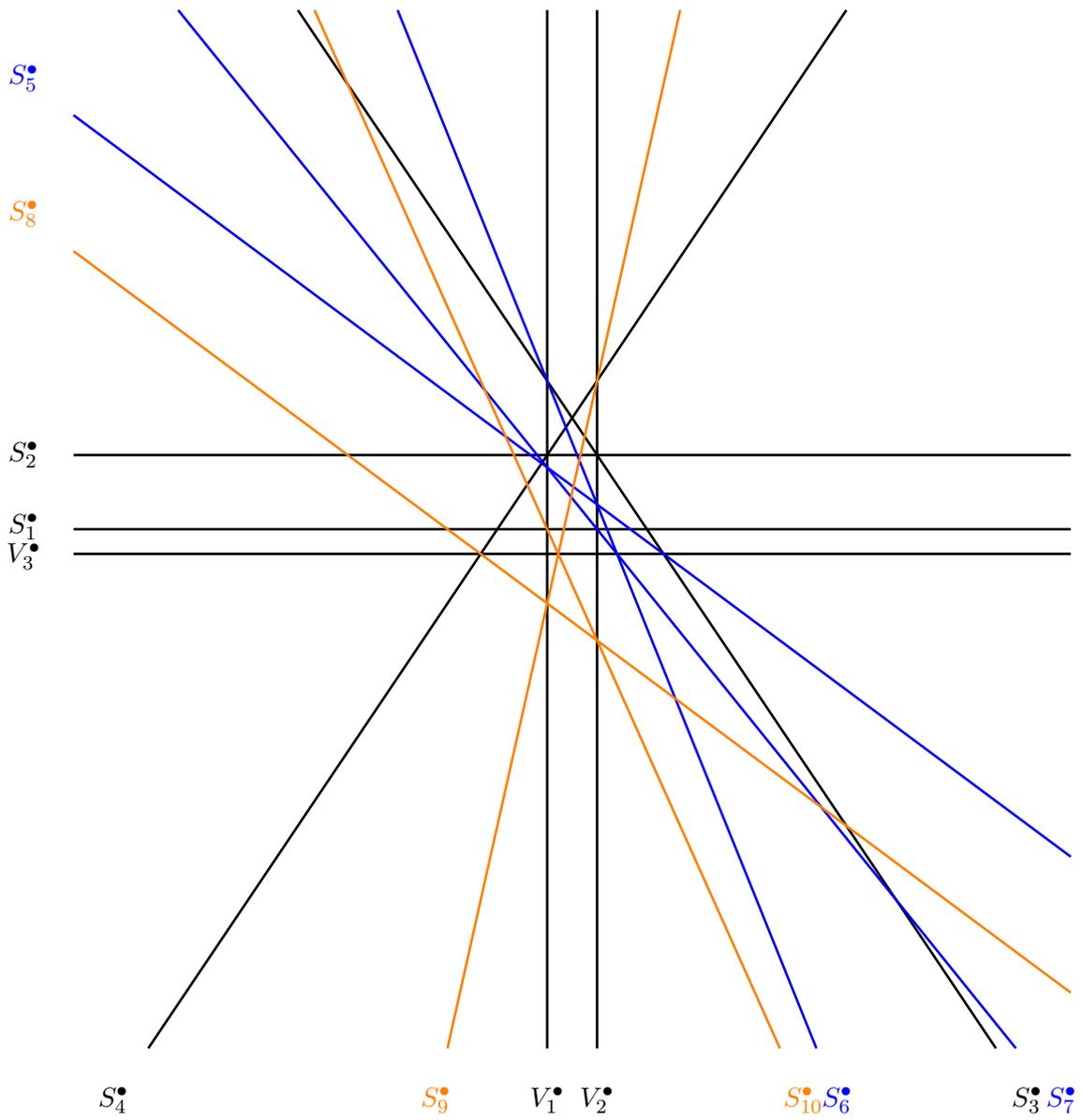
\begin{figure}[h]
	\centering	
	\begin{tikzpicture}[scale=0.35]
\def\colorV{black}
\def\colorS{black}
\def\colorSa{blue}
\def\colorSb{orange}
\coordinate (P1) at (-1, -20);
\coordinate (Q1) at (-1, 22);
\coordinate (P2) at (1, -20);
\coordinate (Q2) at (1, 22);
\coordinate (P3) at (-20, 0);
\coordinate (Q3) at (20, 0);
\coordinate (P4) at (-20, 1);
\coordinate (Q4) at (20, 1);
\coordinate (P5) at (-20, 4);
\coordinate (Q5) at (20, 4);
\coordinate (P6) at (17, -20);
\coordinate (Q6) at (-11, 22);
\coordinate (P7) at (-17, -20);
\coordinate (Q7) at (11, 22);
\coordinate (P8) at (-20, 71/4);
\coordinate (Q8) at (20, -49/4);
\coordinate (P9) at (49/5, -20);
\coordinate (Q9) at (-7, 22);
\coordinate (P10) at (89/5, -20);
\coordinate (Q10) at (-79/5, 22);
\coordinate (P11) at (-20, 49/4);
\coordinate (Q11) at (20, -71/4);
\coordinate (P12) at (-5, -20);
\coordinate (Q12) at (13/3, 22);
\coordinate (P13) at (25/3, -20);
\coordinate (Q13) at (-31/3, 22);
\draw[color=\colorV, line width=1] (P1)--(Q1) node[pos=-.05] {$V^\bullet_1$};
\draw[color=\colorV, line width=1] (P2)--(Q2) node[pos=-.05] {$V^\bullet_2$};
\draw[color=\colorV, line width=1] (P3)--(Q3) node[pos=-.05, yshift=-1] {$V^\bullet_3$};
\draw[color=\colorS, line width=1] (P4)--(Q4) node[pos=-.05, yshift=1] {$S^\bullet_1$};
\draw[color=\colorS, line width=1] (P5)--(Q5) node[pos=-.05] {$S^\bullet_2$};
\draw[color=\colorS, line width=1] (P6)--(Q6) node[pos=-.05, xshift=-1.5] {$S^\bullet_3$};
\draw[color=\colorS, line width=1] (P7)--(Q7) node[pos=-.05] {$S^\bullet_4$};
\draw[color=\colorSa, line width=1] (P8)--(Q8) node[pos=-.05] {$S^\bullet_5$};
\draw[color=\colorSa, line width=1] (P9)--(Q9) node[pos=-.05] {$S^\bullet_6$};
\draw[color=\colorSa, line width=1] (P10)--(Q10) node[pos=-.05, xshift=1.5] {$S^\bullet_7$};
\draw[color=\colorSb, line width=1] (P11)--(Q11) node[pos=-.05] {$S^\bullet_8$};
\draw[color=\colorSb, line width=1] (P12)--(Q12) node[pos=-.05] {$S^\bullet_9$};
\draw[color=\colorSb, line width=1] (P13)--(Q13) node[pos=-.05] {$S^\bullet_{10}$};
\end{tikzpicture}
	\caption{The arrangement $\A^{1,-1}$.\label{fig:ZP_pm}}
\end{figure}
\vfill

\newpage\mbox{}

\vfill
\begin{figure}[h]
	\begin{tikzpicture}[scale=2]
	\def\colorap{blue} 
	\def\coloram{teal}  
	\def\colorbp{red} 
	\def\colorbm{orange} 

		\draw[dashed, color=gray!50] (0,-2) -- (0,5) ;
		\draw[dashed, color=gray!50] (-2,0) -- (5,0) ;
		\draw[dashed, color=gray!50] (-2,4.5) -- (2.5,4.5) to[out=0,in=90] (4.5,2.5) -- (4.5,-2);

		\draw (-2,-1) -- (3,-1) to[out=0,in=-120] (4.5,0) -- (4.75,0.5);
		\draw (-2,2) -- (3,2) to[out=0,in=100] (4.5,0) -- (4.6,-0.5);
		\draw (-2,1) -- (3,1) to[out=0,in=120] (4.5,0) -- (4.8,-0.5);
		\draw (-1,-2) -- (-1,3) to[out=90,in=-150] (0,4.5) -- (0.5,4.75);
		\draw (1,-2) -- (1,3) to[out=90,in=-30] (0,4.5) -- (-0.5,4.75);
		\draw (2,-2) -- (2,3) to[out=90,in=-10] (0,4.5) -- (-0.5,4.55);
		\draw (-2,-2) -- (3,3);
		\draw (-2,1) -- (3,-1.5);
		\draw (-1.5,3) -- (1,-2);


		\draw[color=\coloram] (-2,-1.41) -- (3,-1.41) to[out=0,in=-100] (4.5,0) -- (4.55,0.5);
		\draw[color=\coloram] (0.71,-2) -- (0.71,3) to[out=90,in=-40] (0,4.5) -- (-0.5,4.85);
		\draw[color=\coloram] (-1.41,-2) -- (2.12,3);

		\draw[color=\colorbp] (-2,-0.71) -- (3,-0.71) to[out=0,in=-130] (4.5,0) -- (4.85,0.5);
		\draw[color=\colorbp] (1.41,-2) -- (1.41,3) to[out=90,in=-20] (0,4.5) -- (-0.5,4.65);
		\draw[color=\colorbp] (-2,1.41) -- (3,-2.12);



		 \node at (0,0) {$\bullet$};
		 \node[right] at (0.15,0) {$V_3$};
		 \node at (0,4.5) {$\bullet$};
		 \node[above] at (0,4.5) {$V_1$};
		 \node at (4.5,0) {$\bullet$};
		 \node[right] at (4.5,0) {$V_2$};
	 
		 \node at (1,1) {$\bullet$}; \node at (0.91,1.12) {$S_1$};
		 \node at (2,2) {$\bullet$}; \node at (1.85,2.15) {$S_2$};
		 \node at (-1,2) {$\bullet$}; \node at (-1.16,2.1) {$S_3$};
		 \node at (2,-1) {$\bullet$}; \node at (1.9,-1.15) {$S_4$};
	 
		 \node[text=\coloram] at (0.71,-1.41) {$\bullet$}; \node[text=\coloram] at (0.55,-1.55) {$S_5^{-}$};
		 \node[text=\coloram] at (-1,-1.41) {$\bullet$}; \node[text=\coloram] at (-0.85,-1.55) {$S_6^{-}$};
		 \node[text=\coloram] at (0.71,1) {$\bullet$}; \node[text=\coloram] at (0.55,1.15) {$S_7^{-}$};
	 
		 \node[text=\colorbp] at (1.41,-0.71) {$\bullet$}; \node[text=\colorbp] at (1.6,-0.55) {$S_8^{+}$};
		 \node[text=\colorbp] at (1.41,-1) {$\bullet$}; \node[text=\colorbp] at (1.25,-1.15) {$S_9^{+}$};
		 \node[text=\colorbp] at (1,-0.71) {$\bullet$}; \node[text=\colorbp] at (0.85,-0.85) {$S_{10}^{+}$};
\end{tikzpicture}
	\caption{The $(3,2)$-configuration $\C_{-1,1}$.\label{fig:dual_ZP_mp}}
\end{figure}
\vfill

\newpage\mbox{}

\vfill
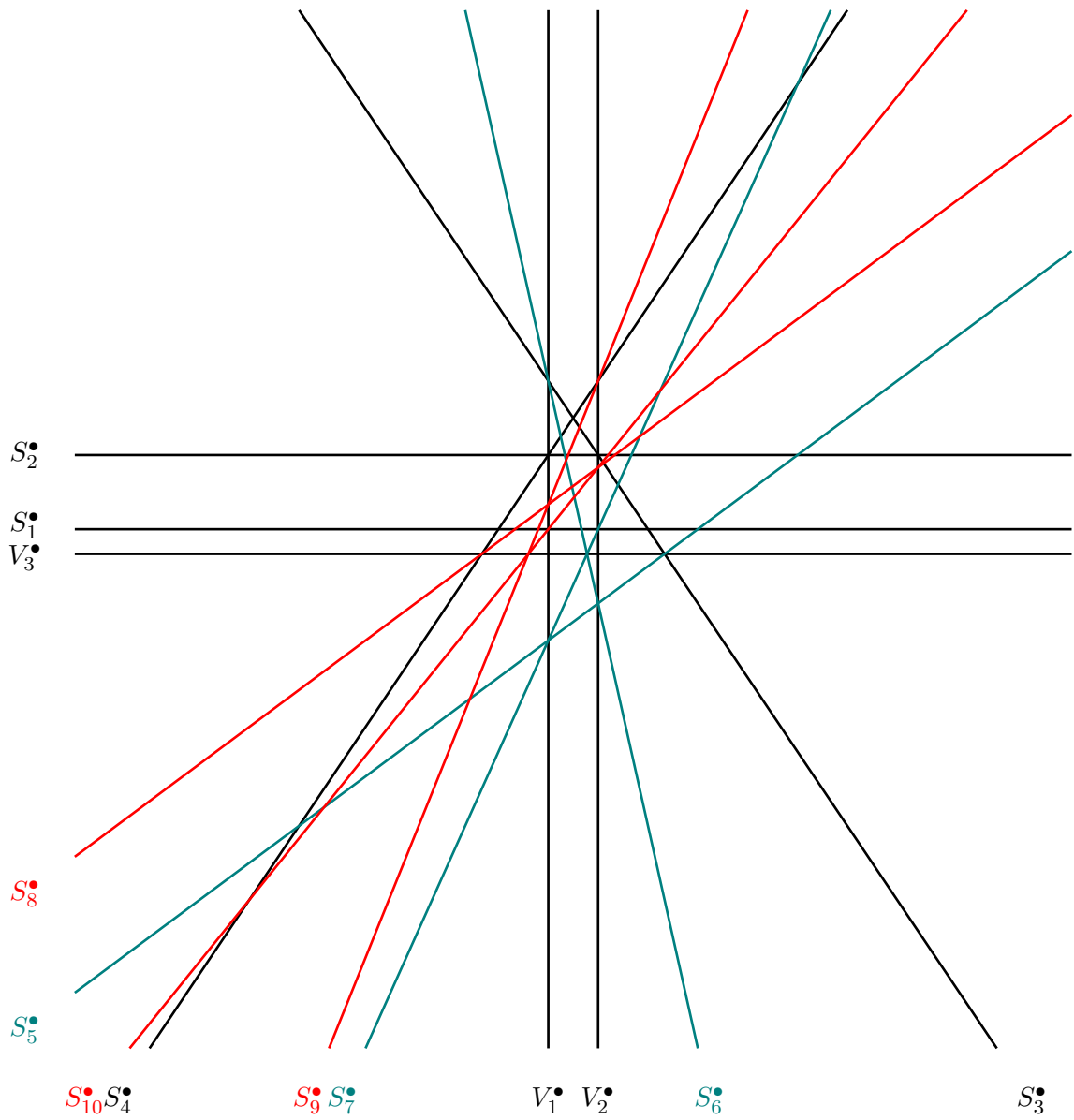
\begin{figure}[h]
	\centering	
	\begin{tikzpicture}[scale=0.35]
\def\colorV{black}
\def\colorS{black}
\def\colorSa{teal}
\def\colorSb{red}
\coordinate (P1) at (-1, -20);
\coordinate (Q1) at (-1, 22);
\coordinate (P2) at (1, -20);
\coordinate (Q2) at (1, 22);
\coordinate (P3) at (-20, 0);
\coordinate (Q3) at (20, 0);
\coordinate (P4) at (-20, 1);
\coordinate (Q4) at (20, 1);
\coordinate (P5) at (-20, 4);
\coordinate (Q5) at (20, 4);
\coordinate (P6) at (17, -20);
\coordinate (Q6) at (-11, 22);
\coordinate (P7) at (-17, -20);
\coordinate (Q7) at (11, 22);
\coordinate (P8) at (-20, -71/4);
\coordinate (Q8) at (20, 49/4);
\coordinate (P9) at (5, -20);
\coordinate (Q9) at (-13/3, 22);
\coordinate (P10) at (-25/3, -20);
\coordinate (Q10) at (31/3, 22);
\coordinate (P11) at (-20, -49/4);
\coordinate (Q11) at (20, 71/4);
\coordinate (P12) at (-49/5, -20);
\coordinate (Q12) at (7, 22);
\coordinate (P13) at (-89/5, -20);
\coordinate (Q13) at (79/5, 22);
\draw[color=\colorV, line width=1] (P1)--(Q1) node[pos=-.05] {$V^\bullet_1$};
\draw[color=\colorV, line width=1] (P2)--(Q2) node[pos=-.05] {$V^\bullet_2$};
\draw[color=\colorV, line width=1] (P3)--(Q3) node[pos=-.05, yshift=-1.5] {$V^\bullet_3$};
\draw[color=\colorS, line width=1] (P4)--(Q4) node[pos=-.05, yshift=1.5] {$S^\bullet_1$};
\draw[color=\colorS, line width=1] (P5)--(Q5) node[pos=-.05] {$S^\bullet_2$};
\draw[color=\colorS, line width=1] (P6)--(Q6) node[pos=-.05] {$S^\bullet_3$};
\draw[color=\colorS, line width=1] (P7)--(Q7) node[pos=-.05, xshift=1.5] {$S^\bullet_4$};
\draw[color=\colorSa, line width=1] (P8)--(Q8) node[pos=-.05] {$S^\bullet_5$};
\draw[color=\colorSa, line width=1] (P9)--(Q9) node[pos=-.05] {$S^\bullet_6$};
\draw[color=\colorSa, line width=1] (P10)--(Q10) node[pos=-.05] {$S^\bullet_7$};
\draw[color=\colorSb, line width=1] (P11)--(Q11) node[pos=-.05] {$S^\bullet_8$};
\draw[color=\colorSb, line width=1] (P12)--(Q12) node[pos=-.05] {$S^\bullet_9$};
\draw[color=\colorSb, line width=1] (P13)--(Q13) node[pos=-.05, xshift=-1.5] {$S^\bullet_{10}$};
\end{tikzpicture}
	\caption{The arrangement $\A^{-1,1}$.\label{fig:ZP_mp}}
\end{figure}
\vfill

\newpage\mbox{}

\vfill
\begin{figure}[h]
	\begin{tikzpicture}[scale=2]
	\def\colorap{blue} 
	\def\coloram{teal}  
	\def\colorbp{red} 
	\def\colorbm{orange} 

		\draw[dashed, color=gray!50] (0,-2) -- (0,5) ;
		\draw[dashed, color=gray!50] (-2,0) -- (5,0) ;
		\draw[dashed, color=gray!50] (-2,4.5) -- (2.5,4.5) to[out=0,in=90] (4.5,2.5) -- (4.5,-2);

		\draw (-2,-1) -- (3,-1) to[out=0,in=-120] (4.5,0) -- (4.75,0.5);
		\draw (-2,2) -- (3,2) to[out=0,in=100] (4.5,0) -- (4.6,-0.5);
		\draw (-2,1) -- (3,1) to[out=0,in=120] (4.5,0) -- (4.8,-0.5);
		\draw (-1,-2) -- (-1,3) to[out=90,in=-150] (0,4.5) -- (0.5,4.75);
		\draw (1,-2) -- (1,3) to[out=90,in=-30] (0,4.5) -- (-0.5,4.75);
		\draw (2,-2) -- (2,3) to[out=90,in=-10] (0,4.5) -- (-0.5,4.55);
		\draw (-2,-2) -- (3,3);
		\draw (-2,1) -- (3,-1.5);
		\draw (-1.5,3) -- (1,-2);


		\draw[color=\coloram] (-2,-1.41) -- (3,-1.41) to[out=0,in=-100] (4.5,0) -- (4.55,0.5);
		\draw[color=\coloram] (0.71,-2) -- (0.71,3) to[out=90,in=-40] (0,4.5) -- (-0.5,4.85);
		\draw[color=\coloram] (-1.41,-2) -- (2.12,3);


		\draw[color=\colorbm] (-2,0.71) -- (3,0.71) to[out=0,in=130] (4.5,0) -- (4.9,-0.5);
		\draw[color=\colorbm] (-1.41,-2) -- (-1.41,3) to[out=90,in=-170] (0,4.5) -- (0.5,4.55);
		\draw[color=\colorbm] (-2,-1.41) -- (3,2.12);


		 \node at (0,0) {$\bullet$};
		 \node[right] at (0.15,0) {$V_3$};
		 \node at (0,4.5) {$\bullet$};
		 \node[above] at (0,4.5) {$V_1$};
		 \node at (4.5,0) {$\bullet$};
		 \node[right] at (4.5,0) {$V_2$};
	 
		 \node at (1,1) {$\bullet$}; \node at (0.91,1.12) {$S_1$};
		 \node at (2,2) {$\bullet$}; \node at (1.85,2.15) {$S_2$};
		 \node at (-1,2) {$\bullet$}; \node at (-1.16,2.1) {$S_3$};
		 \node at (2,-1) {$\bullet$}; \node at (1.9,-1.15) {$S_4$};
	 
		 \node[text=\coloram] at (0.71,-1.41) {$\bullet$}; \node[text=\coloram] at (0.55,-1.55) {$S_5^{-}$};
		 \node[text=\coloram] at (-1,-1.41) {$\bullet$}; \node[text=\coloram] at (-0.85,-1.55) {$S_6^{-}$};
		 \node[text=\coloram] at (0.71,1) {$\bullet$}; \node[text=\coloram] at (0.55,1.15) {$S_7^{-}$};
	 
		 \node[text=\colorbm] at (-1.41,0.71) {$\bullet$}; \node[text=\colorbm] at (-1.6,0.55) {$S_8^{-}$};
		 \node[text=\colorbm] at (-1.41,-1) {$\bullet$}; \node[text=\colorbm] at (-1.55,-0.85) {$S_9^{-}$};
		 \node[text=\colorbm] at (1,0.71) {$\bullet$}; \node[text=\colorbm] at (1.2,0.55) {$S_{10}^{-}$};
\end{tikzpicture}
	\caption{The $(3,2)$-configuration $\C_{-1,-1}$.\label{fig:dual_ZP_mm}}
\end{figure}
\vfill

\newpage\mbox{}

\vfill
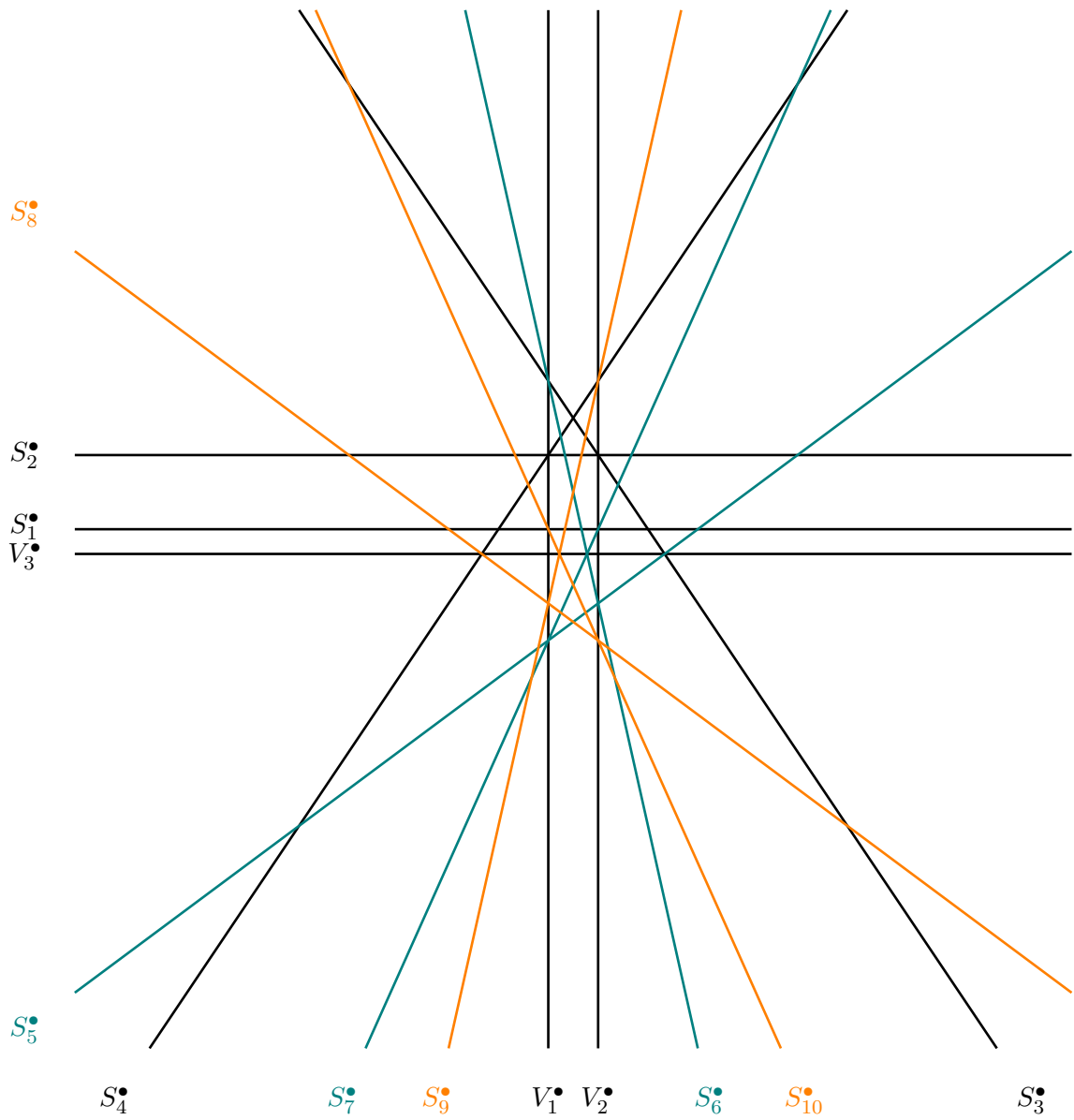
\begin{figure}[h]
	\centering	
	\begin{tikzpicture}[scale=0.35]
\def\colorV{black}
\def\colorS{black}
\def\colorSa{teal}
\def\colorSb{orange}
\coordinate (P1) at (-1, -20);
\coordinate (Q1) at (-1, 22);
\coordinate (P2) at (1, -20);
\coordinate (Q2) at (1, 22);
\coordinate (P3) at (-20, 0);
\coordinate (Q3) at (20, 0);
\coordinate (P4) at (-20, 1);
\coordinate (Q4) at (20, 1);
\coordinate (P5) at (-20, 4);
\coordinate (Q5) at (20, 4);
\coordinate (P6) at (17, -20);
\coordinate (Q6) at (-11, 22);
\coordinate (P7) at (-17, -20);
\coordinate (Q7) at (11, 22);
\coordinate (P8) at (-20, -71/4);
\coordinate (Q8) at (20, 49/4);
\coordinate (P9) at (5, -20);
\coordinate (Q9) at (-13/3, 22);
\coordinate (P10) at (-25/3, -20);
\coordinate (Q10) at (31/3, 22);
\coordinate (P11) at (-20, 49/4);
\coordinate (Q11) at (20, -71/4);
\coordinate (P12) at (-5, -20);
\coordinate (Q12) at (13/3, 22);
\coordinate (P13) at (25/3, -20);
\coordinate (Q13) at (-31/3, 22);
\draw[color=\colorV, line width=1] (P1)--(Q1) node[pos=-.05] {$V^\bullet_1$};
\draw[color=\colorV, line width=1] (P2)--(Q2) node[pos=-.05] {$V^\bullet_2$};
\draw[color=\colorV, line width=1] (P3)--(Q3) node[pos=-.05, yshift=-1] {$V^\bullet_3$};
\draw[color=\colorS, line width=1] (P4)--(Q4) node[pos=-.05, yshift=1] {$S^\bullet_1$};
\draw[color=\colorS, line width=1] (P5)--(Q5) node[pos=-.05] {$S^\bullet_2$};
\draw[color=\colorS, line width=1] (P6)--(Q6) node[pos=-.05] {$S^\bullet_3$};
\draw[color=\colorS, line width=1] (P7)--(Q7) node[pos=-.05] {$S^\bullet_4$};
\draw[color=\colorSa, line width=1] (P8)--(Q8) node[pos=-.05] {$S^\bullet_5$};
\draw[color=\colorSa, line width=1] (P9)--(Q9) node[pos=-.05] {$S^\bullet_6$};
\draw[color=\colorSa, line width=1] (P10)--(Q10) node[pos=-.05] {$S^\bullet_7$};
\draw[color=\colorSb, line width=1] (P11)--(Q11) node[pos=-.05] {$S^\bullet_8$};
\draw[color=\colorSb, line width=1] (P12)--(Q12) node[pos=-.05] {$S^\bullet_9$};
\draw[color=\colorSb, line width=1] (P13)--(Q13) node[pos=-.05] {$S^\bullet_{10}$};
\end{tikzpicture}
	\caption{The arrangement $\A^{-1,-1}$.\label{fig:ZP_mm}}
\end{figure}
\vfill

\newpage\mbox{}


\section{The $(3,2)$-configurations $\C_{\alpha,\beta}^2$ and their duals}\label{sec:appendix_ZP13+2pts5}

From the previous configurations, we obtain in Section~\ref{sec:deg} degenerated configurations $\C_{\alpha,\beta}^2$ with two alignments of five points, whose moduli space is studied in Section~\ref{sec:MS}. The dual arrangements $\A_2^{\alpha,\beta}$ of $\C_{\alpha,\beta}^2$ are pictured taking the line $S_1^\bullet$ (containing the two points of multiplicity five) as line at infinity.

\vfill
\begin{figure}[h]
	\centering	
	\begin{tikzpicture}[scale=2]

	\def\colorap{blue} 
	\def\coloram{teal}  
	\def\colorbp{red} 
	\def\colorbm{orange} 
	
	\tikzset{
	    myConic/.style = {color=cyan, dashed, smooth, samples=200, line width=1.5}
	}
	\draw[domain=sqrt(2)/2-0.025:2*sqrt(2)-0.09062][myConic] plot[variable=\x] ({\x}, { (1/4*sqrt(2)*(sqrt(2)*\x + sqrt(2) - sqrt(-6*\x*\x + 6*\x*(sqrt(2) + 2)
- 10*sqrt(2) + 3) + 1) });
	\draw[domain=sqrt(2)/2-0.025:2*sqrt(2)-0.09062][myConic] plot[variable=\x] ({\x}, { (1/4*sqrt(2)*(sqrt(2)*\x + sqrt(2) + sqrt(-6*\x*\x + 6*\x*(sqrt(2) + 2)
- 10*sqrt(2) + 3) + 1) });

		\draw[dashed, color=gray!50] (0,-2) -- (0,5) ;
		\draw[dashed, color=gray!50] (-2,0) -- (5,0) ;
		\draw[dashed, color=gray!50] (-2,4.5) -- (2.5,4.5) to[out=0,in=90] (4.5,2.5) -- (4.5,-2);

		\draw (-2,2) -- (3,2) to[out=0,in=100] (4.5,0) -- (4.6,-0.5);
		\draw (-2,1) -- (3,1) to[out=0,in=120] (4.5,0) -- (4.8,-0.5);
		\draw (1,-2) -- (1,3) to[out=90,in=-30] (0,4.5) -- (-0.5,4.75);
		\draw (2,-2) -- (2,3) to[out=90,in=-10] (0,4.5) -- (-0.5,4.55);
		\draw (-2,-2) -- (3,3);
		\draw (-2,-1) -- (3,1.5);
		\draw (1.5,3) -- (-1,-2);

		\draw[color=\colorap] (-2,1.41) -- (3,1.41) to[out=0,in=110] (4.5,0) -- (4.7,-0.5);
		\draw[color=\colorap] (0.71,-2) -- (0.71,3) to[out=90,in=-40] (0,4.5) -- (-0.5,4.85);
		\draw[color=\colorap] (2,2.82) -- (-1.41,-2);


		\draw[color=\colorbp] (-2,0.71) -- (3,0.71) to[out=0,in=130] (4.5,0) -- (4.9,-0.5);
		\draw[color=\colorbp] (1.41,-2) -- (1.41,3) to[out=90,in=-20] (0,4.5) -- (-0.5,4.65);
		\draw[color=\colorbp] (-2,-1.41) -- (3,2.12);



		 \node at (0,0) {$\bullet$};
		 \node[right] at (0.15,0) {$V_3$};
		 \node at (0,4.5) {$\bullet$};
		 \node[above] at (0,4.5) {$V_1$};
		 \node at (4.5,0) {$\bullet$};
		 \node[right] at (4.5,0) {$V_2$};
	 
		 \node at (1,1) {$\bullet$}; \node at (0.91,1.12) {$S_1$};
		 \node at (2,2) {$\bullet$}; \node at (1.85,2.15) {$S_2$};
		 \node at (1,2) {$\bullet$}; \node at (0.8561,2.1187) {$S_3$};
		 \node at (2,1) {$\bullet$}; \node at (2.1304,0.8689) {$S_4$};
	 
		 \node[text=\colorap] at (0.71,1.41) {$\bullet$}; \node[text=\colorap] at (0.5641,1.5365) {$S_5^{+}$};
		 \node[text=\colorap] at (1,1.41) {$\bullet$}; \node[text=\colorap] at (1.2265,1.5121) {$S_6^{+}$};
		 \node[text=\colorap] at (0.71,1) {$\bullet$};  \node[text=\colorap] at (0.5836,1.1138) {$S_7^{+}$};
	 
		 \node[text=\colorbp] at (1.41,0.71) {$\bullet$}; \node[text=\colorbp] at (1.5386,0.5807) {$S_8^{+}$};
		 \node[text=\colorbp] at (1.41,1) {$\bullet$}; \node[text=\colorbp] at (1.3087,1.1434) {$S_9^{+}$};
		 \node[text=\colorbp] at (1,0.71) {$\bullet$}; \node[text=\colorbp] at (1.1306,0.5781) {$S_{10}^{+}$};
\end{tikzpicture}
	\caption{The $(3,2)$-configuration $\C^2_{1,1}$ and the conic joining the six points $S_3,S_4,S_5^+,S_7^+,S_8^+,S_{10}^+$.\label{fig:dual_ZP_pp}}
\end{figure}
\vfill

\newpage\mbox{}

\vfill
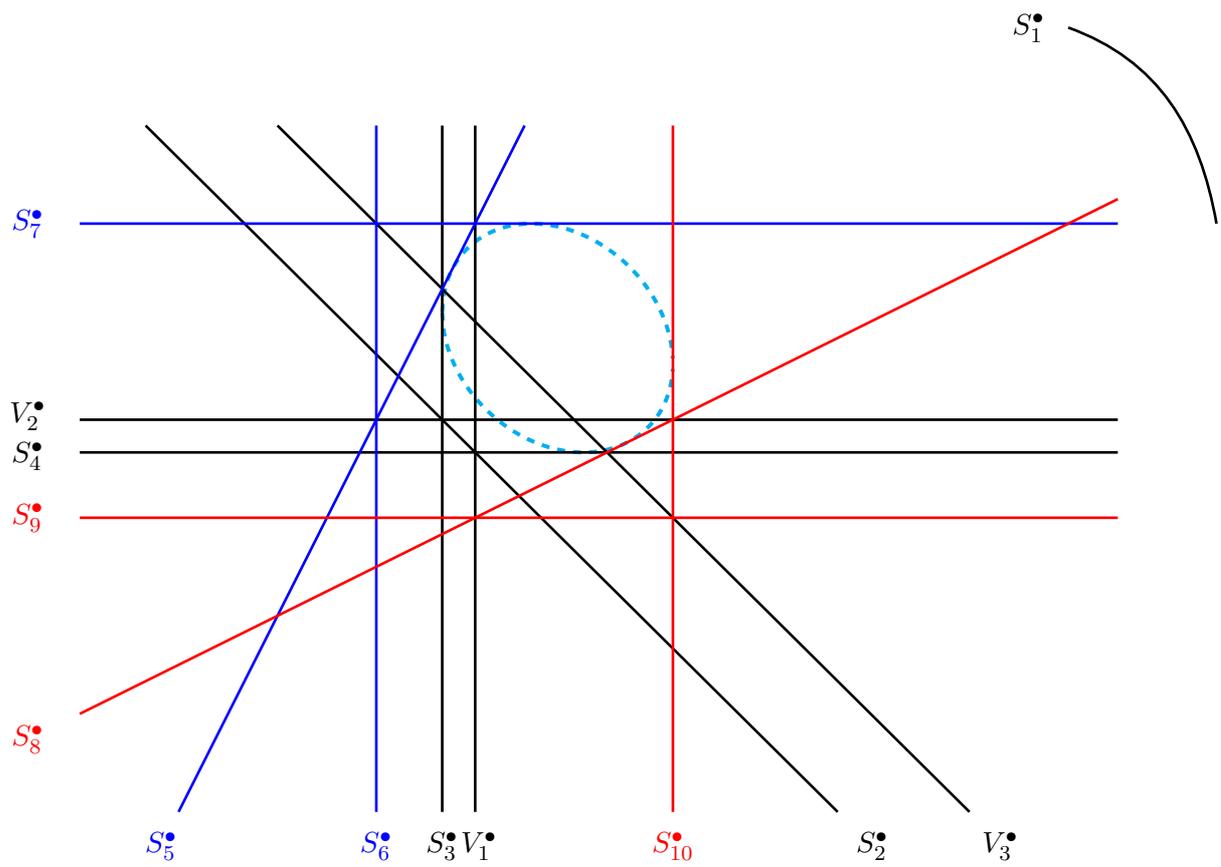
\begin{figure}[h]
	\centering	
	\begin{tikzpicture}[scale=1.3]
\def\colorV{black}
\def\colorS{black}
\def\colorSa{blue}
\def\colorSb{red}
\tikzset{
    myConic/.style = {color=cyan, dashed, smooth, samples=200, line width=1.5}
}

\draw[domain=-1/3:2][myConic] plot[variable=\x] ({\x}, { (50 - 11*\x + 2*sqrt(190)*sqrt(2 + 5*\x - 3*\x*\x))/49 });
\draw[domain=-1/3:2][myConic] plot[variable=\x] ({\x}, { (50 - 11*\x - 2*sqrt(190)*sqrt(2 + 5*\x - 3*\x*\x))/49 });

\coordinate (P1) at (0, -4);
\coordinate (Q1) at (0, 3);
\coordinate (P2) at (-4, 0);
\coordinate (Q2) at (13/2, 0);
\coordinate (P3) at (5, -4);
\coordinate (Q3) at (-2, 3);
\coordinate (P4) at (11/3, -4);
\coordinate (Q4) at (-10/3, 3);
\coordinate (P5) at (-1/3, -4);
\coordinate (Q5) at (-1/3, 3);
\coordinate (P6) at (-4, -1/3);
\coordinate (Q6) at (13/2, -1/3);
\coordinate (P7) at (-3, -4);
\coordinate (Q7) at (1/2, 3);
\coordinate (P8) at (-1, -4);
\coordinate (Q8) at (-1, 3);
\coordinate (P9) at (-4, 2);
\coordinate (Q9) at (13/2, 2);
\coordinate (P10) at (-4, -3);
\coordinate (Q10) at (13/2, 9/4);
\coordinate (P11) at (-4, -1);
\coordinate (Q11) at (13/2, -1);
\coordinate (P12) at (2, -4);
\coordinate (Q12) at (2, 3);
\draw[color=\colorV, line width=1] (P1)--(Q1) node[pos=-.05,xshift=.05cm] {$V^\bullet_1$};
\draw[color=\colorV, line width=1] (P2)--(Q2) node[pos=-.05,yshift=.05cm] {$V^\bullet_2$};
\draw[color=\colorV, line width=1] (P3)--(Q3) node[pos=-.05,xshift=-.05cm] {$V^\bullet_3$};
\draw[color=\colorS, line width=1] (P4)--(Q4) node[pos=-.05] {$S^\bullet_2$};
\draw[color=\colorS, line width=1] (P5)--(Q5) node[pos=-.05] {$S^\bullet_3$};
\draw[color=\colorS, line width=1] (P6)--(Q6) node[pos=-.05,yshift=-.05cm] {$S^\bullet_4$};
\draw[color=\colorSa, line width=1] (P7)--(Q7) node[pos=-.05] {$S^\bullet_5$};
\draw[color=\colorSa, line width=1] (P8)--(Q8) node[pos=-.05] {$S^\bullet_6$};
\draw[color=\colorSa, line width=1] (P9)--(Q9) node[pos=-.05] {$S^\bullet_7$};
\draw[color=\colorSb, line width=1] (P10)--(Q10) node[pos=-.05] {$S^\bullet_8$};
\draw[color=\colorSb, line width=1] (P11)--(Q11) node[pos=-.05] {$S^\bullet_9$};
\draw[color=\colorSb, line width=1] (P12)--(Q12) node[pos=-.05] {$S^\bullet_{10}$};

\draw[color=\colorS, line width=1] (6,4) to[out=-20,in=100] (7.5,2);
\node[text=\colorS] at (5.6,4) {$S_1^\bullet$};

\end{tikzpicture}
	\caption{The arrangement $\A_2^{1,1}$ and the conic tangent to the six lines $S_3^\bullet,S_4^\bullet,S_5^\bullet,S_7^\bullet,S_8^\bullet,S_{10}^\bullet$.\label{fig:ZP2pt5_pp}}
\end{figure}
\vfill

\newpage\mbox{}

\vfill
\begin{figure}[h]
	\begin{tikzpicture}[scale=2]

	\def\colorap{blue} 
	\def\coloram{teal}  
	\def\colorbp{red} 
	\def\colorbm{orange} 
	
		\draw[dashed, color=gray!50] (0,-2) -- (0,5) ;
		\draw[dashed, color=gray!50] (-2,0) -- (5,0) ;
		\draw[dashed, color=gray!50] (-2,4.5) -- (2.5,4.5) to[out=0,in=90] (4.5,2.5) -- (4.5,-2);

		\draw (-2,2) -- (3,2) to[out=0,in=100] (4.5,0) -- (4.6,-0.5);
		\draw (-2,1) -- (3,1) to[out=0,in=120] (4.5,0) -- (4.8,-0.5);
		\draw (1,-2) -- (1,3) to[out=90,in=-30] (0,4.5) -- (-0.5,4.75);
		\draw (2,-2) -- (2,3) to[out=90,in=-10] (0,4.5) -- (-0.5,4.55);
		\draw (-2,-2) -- (3,3);
		\draw (-2,-1) -- (3,1.5);
		\draw (1.5,3) -- (-1,-2);

		\draw[color=\colorap] (-2,1.41) -- (3,1.41) to[out=0,in=110] (4.5,0) -- (4.7,-0.5);
		\draw[color=\colorap] (0.71,-2) -- (0.71,3) to[out=90,in=-40] (0,4.5) -- (-0.5,4.85);
		\draw[color=\colorap] (2,2.82) -- (-1.41,-2);



		\draw[color=\colorbm] (-2,-0.71) -- (3,-0.71) to[out=0,in=-130] (4.5,0) -- (4.85,0.5);
		\draw[color=\colorbm] (-1.41,-2) -- (-1.41,3) to[out=90,in=-170] (0,4.5) -- (0.5,4.55);
		\draw[color=\colorbm] (-2,1.41) -- (3,-2.12);


		 \node at (0,0) {$\bullet$};
		 \node[right] at (0.15,0) {$V_3$};
		 \node at (0,4.5) {$\bullet$};
		 \node[above] at (0,4.5) {$V_1$};
		 \node at (4.5,0) {$\bullet$};
		 \node[right] at (4.5,0) {$V_2$};
	 
		 \node at (1,1) {$\bullet$}; \node at (0.91,1.12) {$S_1$};
		 \node at (2,2) {$\bullet$}; \node at (1.85,2.15) {$S_2$};
		 \node at (1,2) {$\bullet$}; \node at (0.8561,2.1187) {$S_3$};
		 \node at (2,1) {$\bullet$}; \node at (2.1304,0.8689) {$S_4$};
	 
		 \node[text=\colorap] at (0.71,1.41) {$\bullet$}; \node[text=\colorap] at (0.5641,1.5365) {$S_5^{+}$};
		 \node[text=\colorap] at (1,1.41) {$\bullet$}; \node[text=\colorap] at (1.2265,1.5121) {$S_6^{+}$};
		 \node[text=\colorap] at (0.71,1) {$\bullet$};  \node[text=\colorap] at (0.5836,1.1138) {$S_7^{+}$};
	 
		 \node[text=\colorbm] at (-1.41,-0.71) {$\bullet$}; \node[text=\colorbm] at (-1.5569,-0.5443) {$S_8^{-}$};
		 \node[text=\colorbm] at (-1.41,1) {$\bullet$}; \node[text=\colorbm] at (-1.5695,0.837) {$S_9^{-}$};
		 \node[text=\colorbm] at (1,-0.71) {$\bullet$}; \node[text=\colorbm] at (1.1685,-0.5504) {$S_{10}^{-}$};
\end{tikzpicture}
	\caption{The $(3,2)$-configuration $\C^2_{1,-1}$.\label{fig:dual_ZP_pm}}
\end{figure}
\vfill

\newpage\mbox{}

\vfill
\begin{figure}[h]
	\centering	
	\begin{tikzpicture}[scale=1.3]
\def\colorV{black}
\def\colorS{black}
\def\colorSa{blue}
\def\colorSb{orange}
\coordinate (P1) at (0, -4);
\coordinate (Q1) at (0, 3);
\coordinate (P2) at (-4, 0);
\coordinate (Q2) at (13/2, 0);
\coordinate (P3) at (5, -4);
\coordinate (Q3) at (-2, 3);
\coordinate (P4) at (11/3, -4);
\coordinate (Q4) at (-10/3, 3);
\coordinate (P5) at (-1/3, -4);
\coordinate (Q5) at (-1/3, 3);
\coordinate (P6) at (-4, -1/3);
\coordinate (Q6) at (13/2, -1/3);
\coordinate (P7) at (-3, -4);
\coordinate (Q7) at (1/2, 3);
\coordinate (P8) at (-1, -4);
\coordinate (Q8) at (-1, 3);
\coordinate (P9) at (-4, 2);
\coordinate (Q9) at (13/2, 2);
\coordinate (P10) at (-4, 7/3);
\coordinate (Q10) at (13/2, -35/12);
\coordinate (P11) at (-4, 1/3);
\coordinate (Q11) at (13/2, 1/3);
\coordinate (P12) at (2/3, -4);
\coordinate (Q12) at (2/3, 3);
\draw[color=\colorV, line width=1] (P1)--(Q1) node[pos=-.05,xshift=.05cm]
{$V^\bullet_1$};
\draw[color=\colorV, line width=1] (P2)--(Q2) node[pos=-.05]
{$V^\bullet_2$};
\draw[color=\colorV, line width=1] (P3)--(Q3) node[pos=-.05]
{$V^\bullet_3$};
\draw[color=\colorS, line width=1] (P4)--(Q4) node[pos=-.05]
{$S^\bullet_2$};
\draw[color=\colorS, line width=1] (P5)--(Q5) node[pos=-.05,xshift=-.05cm]
{$S^\bullet_3$};
\draw[color=\colorS, line width=1] (P6)--(Q6) node[pos=-.05, yshift=-.1cm]
{$S^\bullet_4$};
\draw[color=\colorSa, line width=1] (P7)--(Q7) node[pos=-.05]
{$S^\bullet_5$};
\draw[color=\colorSa, line width=1] (P8)--(Q8) node[pos=-.05]
{$S^\bullet_6$};
\draw[color=\colorSa, line width=1] (P9)--(Q9) node[pos=-.05]
{$S^\bullet_7$};
\draw[color=\colorSb, line width=1] (P10)--(Q10) node[pos=-.05]
{$S^\bullet_8$};
\draw[color=\colorSb, line width=1] (P11)--(Q11) node[pos=-.05,yshift=.1cm]
{$S^\bullet_9$};
\draw[color=\colorSb, line width=1] (P12)--(Q12) node[pos=-.05]
{$S^\bullet_{10}$};

\draw[color=\colorS, line width=1] (6,4) to[out=-20,in=100] (7.5,2);
\node[text=\colorS] at (5.6,4) {$S_1^\bullet$};
\end{tikzpicture}
	\caption{The arrangement $\A_2^{1,-1}$.\label{fig:ZP2pt5_pm}}
\end{figure}
\vfill

\newpage\mbox{}

\vfill
\begin{figure}[h]
	\begin{tikzpicture}[scale=2]

	\def\colorap{blue} 
	\def\coloram{teal}  
	\def\colorbp{red} 
	\def\colorbm{orange} 
	
		\draw[dashed, color=gray!50] (0,-2) -- (0,5) ;
		\draw[dashed, color=gray!50] (-2,0) -- (5,0) ;
		\draw[dashed, color=gray!50] (-2,4.5) -- (2.5,4.5) to[out=0,in=90] (4.5,2.5) -- (4.5,-2);

		\draw (-2,2) -- (3,2) to[out=0,in=100] (4.5,0) -- (4.6,-0.5);
		\draw (-2,1) -- (3,1) to[out=0,in=120] (4.5,0) -- (4.8,-0.5);
		\draw (1,-2) -- (1,3) to[out=90,in=-30] (0,4.5) -- (-0.5,4.75);
		\draw (2,-2) -- (2,3) to[out=90,in=-10] (0,4.5) -- (-0.5,4.55);
		\draw (-2,-2) -- (3,3);
		\draw (-2,-1) -- (3,1.5);
		\draw (1.5,3) -- (-1,-2);


		\draw[color=\coloram] (-2,-1.41) -- (3,-1.41) to[out=0,in=-100] (4.5,0) -- (4.55,0.5);
		\draw[color=\coloram] (-0.71,-2) -- (-0.71,3) to[out=90,in=-140] (0,4.5) -- (0.5,4.8);
		\draw[color=\coloram] (1.41,-2) -- (-2.12,3);

		\draw[color=\colorbp] (-2,0.71) -- (3,0.71) to[out=0,in=130] (4.5,0) -- (4.9,-0.5);
		\draw[color=\colorbp] (1.41,-2) -- (1.41,3) to[out=90,in=-20] (0,4.5) -- (-0.5,4.65);
		\draw[color=\colorbp] (-2,-1.41) -- (3,2.12);



		 \node at (0,0) {$\bullet$};
		 \node[right] at (0.15,0) {$V_3$};
		 \node at (0,4.5) {$\bullet$};
		 \node[above] at (0,4.5) {$V_1$};
		 \node at (4.5,0) {$\bullet$};
		 \node[right] at (4.5,0) {$V_2$};
	 
		 \node at (1,1) {$\bullet$}; \node at (0.91,1.12) {$S_1$};
		 \node at (2,2) {$\bullet$}; \node at (1.85,2.15) {$S_2$};
		 \node at (1,2) {$\bullet$}; \node at (0.8561,2.1187) {$S_3$};
		 \node at (2,1) {$\bullet$}; \node at (2.1304,0.8689) {$S_4$};
	 
		 \node[text=\coloram] at (-0.71,-1.41) {$\bullet$}; \node[text=\coloram] at (-0.8909,-1.5313) {$S_5^{-}$};
		 \node[text=\coloram] at (1,-1.41) {$\bullet$}; \node[text=\coloram] at (0.8129,-1.5499) {$S_6^{-}$};
		 \node[text=\coloram] at (-0.71,1) {$\bullet$}; \node[text=\coloram] at (-0.5429,1.1386) {$S_7^{-}$};
	 
		 \node[text=\colorbp] at (1.41,0.71) {$\bullet$}; \node[text=\colorbp] at (1.5386,0.5807) {$S_8^{+}$};
		 \node[text=\colorbp] at (1.41,1) {$\bullet$}; \node[text=\colorbp] at (1.3087,1.1434) {$S_9^{+}$};
		 \node[text=\colorbp] at (1,0.71) {$\bullet$}; \node[text=\colorbp] at (1.1306,0.5781) {$S_{10}^{+}$};
\end{tikzpicture}
	\caption{The $(3,2)$-configuration $\C^2_{-1,1}$.\label{fig:dual_ZP_mp}}
\end{figure}
\vfill

\newpage\mbox{}

\vfill
\begin{figure}[h]
	\centering	
	\begin{tikzpicture}[scale=1.3]
\def\colorV{black}
\def\colorS{black}
\def\colorSa{teal}
\def\colorSb{red}
\coordinate (P1) at (0, -4);
\coordinate (Q1) at (0, 3);
\coordinate (P2) at (-4, 0);
\coordinate (Q2) at (13/2, 0);
\coordinate (P3) at (5, -4);
\coordinate (Q3) at (-2, 3);
\coordinate (P4) at (11/3, -4);
\coordinate (Q4) at (-10/3, 3);
\coordinate (P5) at (-1/3, -4);
\coordinate (Q5) at (-1/3, 3);
\coordinate (P6) at (-4, -1/3);
\coordinate (Q6) at (13/2, -1/3);
\coordinate (P7) at (7/3, -4);
\coordinate (Q7) at (-7/6, 3);
\coordinate (P8) at (1/3, -4);
\coordinate (Q8) at (1/3, 3);
\coordinate (P9) at (-4, 2/3);
\coordinate (Q9) at (13/2, 2/3);
\coordinate (P10) at (-4, -3);
\coordinate (Q10) at (13/2, 9/4);
\coordinate (P11) at (-4, -1);
\coordinate (Q11) at (13/2, -1);
\coordinate (P12) at (2, -4);
\coordinate (Q12) at (2, 3);
\draw[color=\colorV, line width=1] (P1)--(Q1) node[pos=-.05]
{$V^\bullet_1$};
\draw[color=\colorV, line width=1] (P2)--(Q2) node[pos=-.05,yshift=.05cm]
{$V^\bullet_2$};
\draw[color=\colorV, line width=1] (P3)--(Q3) node[pos=-.05]
{$V^\bullet_3$};
\draw[color=\colorS, line width=1] (P4)--(Q4) node[pos=-.05]
{$S^\bullet_2$};
\draw[color=\colorS, line width=1] (P5)--(Q5) node[pos=-.05,xshift=-.05cm]
{$S^\bullet_3$};
\draw[color=\colorS, line width=1] (P6)--(Q6) node[pos=-.05,yshift=-.05cm]
{$S^\bullet_4$};
\draw[color=\colorSa, line width=1] (P7)--(Q7) node[pos=-.05]
{$S^\bullet_5$};
\draw[color=\colorSa, line width=1] (P8)--(Q8) node[pos=-.05,xshift=.05cm]
{$S^\bullet_6$};
\draw[color=\colorSa, line width=1] (P9)--(Q9) node[pos=-.05]
{$S^\bullet_7$};
\draw[color=\colorSb, line width=1] (P10)--(Q10) node[pos=-.05]
{$S^\bullet_8$};
\draw[color=\colorSb, line width=1] (P11)--(Q11) node[pos=-.05]
{$S^\bullet_9$};
\draw[color=\colorSb, line width=1] (P12)--(Q12) node[pos=-.05]
{$S^\bullet_{10}$};

\draw[color=\colorS, line width=1] (6,4) to[out=-20,in=100] (7.5,2);
\node[text=\colorS] at (5.6,4) {$S_1^\bullet$};
\end{tikzpicture}
	\caption{The arrangement $\A_2^{-1,1}$.\label{fig:ZP2pt5_mp}}
\end{figure}
\vfill

\newpage\mbox{}

\vfill
\begin{figure}[h]
	\begin{tikzpicture}[scale=2]

	\def\colorap{blue} 
	\def\coloram{teal}  
	\def\colorbp{red} 
	\def\colorbm{orange} 
	
	\tikzset{
	    myConic/.style = {color=cyan, dashed, smooth, samples=200, line width=1.5}
	}
	\draw[domain=-sqrt(2)-0.008:sqrt(2)+0.59481][myConic] plot[variable=\x] ({\x}, {1/4*sqrt(2)*(sqrt(2)*\x + sqrt(2) - sqrt(-6*\x*\x - 6*\x*(sqrt(2) - 2) + 10*sqrt(2) + 3) - 1) });
 	\draw[domain=-sqrt(2)-0.008:sqrt(2)+0.59481][myConic] plot[variable=\x] ({\x}, {1/4*sqrt(2)*(sqrt(2)*\x + sqrt(2) + sqrt(-6*\x*\x - 6*\x*(sqrt(2) - 2) + 10*sqrt(2) + 3) - 1) });
	
		\draw[dashed, color=gray!50] (0,-2) -- (0,5) ;
		\draw[dashed, color=gray!50] (-2,0) -- (5,0) ;
		\draw[dashed, color=gray!50] (-2,4.5) -- (2.5,4.5) to[out=0,in=90] (4.5,2.5) -- (4.5,-2);

		\draw (-2,2) -- (3,2) to[out=0,in=100] (4.5,0) -- (4.6,-0.5);
		\draw (-2,1) -- (3,1) to[out=0,in=120] (4.5,0) -- (4.8,-0.5);
		\draw (1,-2) -- (1,3) to[out=90,in=-30] (0,4.5) -- (-0.5,4.75);
		\draw (2,-2) -- (2,3) to[out=90,in=-10] (0,4.5) -- (-0.5,4.55);
		\draw (-2,-2) -- (3,3);
		\draw (-2,-1) -- (3,1.5);
		\draw (1.5,3) -- (-1,-2);


		\draw[color=\coloram] (-2,-1.41) -- (3,-1.41) to[out=0,in=-100] (4.5,0) -- (4.55,0.5);
		\draw[color=\coloram] (-0.71,-2) -- (-0.71,3) to[out=90,in=-140] (0,4.5) -- (0.5,4.8);
		\draw[color=\coloram] (1.41,-2) -- (-2.12,3);


		\draw[color=\colorbm] (-2,-0.71) -- (3,-0.71) to[out=0,in=-130] (4.5,0) -- (4.85,0.5);
		\draw[color=\colorbm] (-1.41,-2) -- (-1.41,3) to[out=90,in=-170] (0,4.5) -- (0.5,4.55);
		\draw[color=\colorbm] (-2,1.41) -- (3,-2.12);


		 \node at (0,0) {$\bullet$};
		 \node[right] at (0.15,0) {$V_3$};
		 \node at (0,4.5) {$\bullet$};
		 \node[above] at (0,4.5) {$V_1$};
		 \node at (4.5,0) {$\bullet$};
		 \node[right] at (4.5,0) {$V_2$};
	 
		 \node at (1,1) {$\bullet$}; \node at (0.91,1.12) {$S_1$};
		 \node at (2,2) {$\bullet$}; \node at (1.85,2.15) {$S_2$};
		 \node at (1,2) {$\bullet$}; \node at (0.8561,2.1187) {$S_3$};
		 \node at (2,1) {$\bullet$}; \node at (2.1304,0.8689) {$S_4$};
	 
		 \node[text=\coloram] at (-0.71,-1.41) {$\bullet$}; \node[text=\coloram] at (-0.8909,-1.5313) {$S_5^{-}$};
		 \node[text=\coloram] at (1,-1.41) {$\bullet$}; \node[text=\coloram] at (0.8129,-1.5499) {$S_6^{-}$};
		 \node[text=\coloram] at (-0.71,1) {$\bullet$}; \node[text=\coloram] at (-1.0173,1.148) {$S_7^{-}$};
	 
		 \node[text=\colorbm] at (-1.41,-0.71) {$\bullet$}; \node[text=\colorbm] at (-1.5569,-0.5443) {$S_8^{-}$};
		 \node[text=\colorbm] at (-1.41,1) {$\bullet$}; \node[text=\colorbm] at (-1.5695,0.837) {$S_9^{-}$};
		 \node[text=\colorbm] at (1,-0.71) {$\bullet$}; \node[text=\colorbm] at (1.1499,-0.9749) {$S_{10}^{-}$};
\end{tikzpicture}
	\caption{The $(3,2)$-configuration $\C^2_{-1,-1}$ and the conic joining the six points $S_3,S_4,S_5^-,S_7^-,S_8^-,S_{10}^-$.\label{fig:dual_ZP_mm}}
\end{figure}
\vfill

\newpage\mbox{}

\vfill
\begin{figure}[h]
	\centering	
	\begin{tikzpicture}[scale=1.3]
\def\colorV{black}
\def\colorS{black}
\def\colorSa{teal}
\def\colorSb{orange}
\tikzset{
    myConic/.style = {color=cyan, dashed, smooth, samples=200, line width=1.5}
}

\draw[domain=-4:-1/3][myConic] plot[variable=\x] ({\x}, {  (10 - 33*\x - 2*sqrt(10)*sqrt(-2 - 3*\x + 9*\x*\x))/27});
\draw[domain=-1.375:-1/3][myConic] plot[variable=\x] ({\x}, {  (10 - 33*\x + 2*sqrt(10)*sqrt(-2 - 3*\x + 9*\x*\x))/27});

\draw[domain=0.667:2.35][myConic] plot[variable=\x] ({\x}, {  (10 - 33*\x - 2*sqrt(10)*sqrt(-2 - 3*\x + 9*\x*\x))/27}); 
\draw[domain=0.667:6.5][myConic] plot[variable=\x] ({\x}, {  (10 - 33*\x + 2*sqrt(10)*sqrt(-2 - 3*\x + 9*\x*\x))/27});

\coordinate (P1) at (0, -4);
\coordinate (Q1) at (0, 3);
\coordinate (P2) at (-4, 0);
\coordinate (Q2) at (13/2, 0);
\coordinate (P3) at (5, -4);
\coordinate (Q3) at (-2, 3);
\coordinate (P4) at (11/3, -4);
\coordinate (Q4) at (-10/3, 3);
\coordinate (P5) at (-1/3, -4);
\coordinate (Q5) at (-1/3, 3);
\coordinate (P6) at (-4, -1/3);
\coordinate (Q6) at (13/2, -1/3);
\coordinate (P7) at (7/3, -4);
\coordinate (Q7) at (-7/6, 3);
\coordinate (P8) at (1/3, -4);
\coordinate (Q8) at (1/3, 3);
\coordinate (P9) at (-4, 2/3);
\coordinate (Q9) at (13/2, 2/3);
\coordinate (P10) at (-4, 7/3);
\coordinate (Q10) at (13/2, -35/12);
\coordinate (P11) at (-4, 1/3);
\coordinate (Q11) at (13/2, 1/3);
\coordinate (P12) at (2/3, -4);
\coordinate (Q12) at (2/3, 3);
\draw[color=\colorV, line width=1] (P1)--(Q1) node[pos=-.05,xshift=-.05cm]
{$V^\bullet_1$};
\draw[color=\colorV, line width=1] (P2)--(Q2) node[pos=-.05,yshift=-.05cm]
{$V^\bullet_2$};
\draw[color=\colorV, line width=1] (P3)--(Q3) node[pos=-.05]
{$V^\bullet_3$};
\draw[color=\colorS, line width=1] (P4)--(Q4) node[pos=-.05]
{$S^\bullet_2$};
\draw[color=\colorS, line width=1] (P5)--(Q5) node[pos=-.05,xshift=-.15cm]
{$S^\bullet_3$};
\draw[color=\colorS, line width=1] (P6)--(Q6) node[pos=-.05,yshift=-.15cm]
{$S^\bullet_4$};
\draw[color=\colorSa, line width=1] (P7)--(Q7) node[pos=-.05]
{$S^\bullet_5$};
\draw[color=\colorSa, line width=1] (P8)--(Q8) node[pos=-.05,xshift=.05cm]
{$S^\bullet_6$};
\draw[color=\colorSa, line width=1] (P9)--(Q9) node[pos=-.05,yshift=.15cm]
{$S^\bullet_7$};
\draw[color=\colorSb, line width=1] (P10)--(Q10) node[pos=-.05]
{$S^\bullet_8$};
\draw[color=\colorSb, line width=1] (P11)--(Q11) node[pos=-.05,yshift=.05cm]
{$S^\bullet_9$};
\draw[color=\colorSb, line width=1] (P12)--(Q12) node[pos=-.05,xshift=.15cm]
{$S^\bullet_{10}$};

\draw[color=\colorS, line width=1] (6,4) to[out=-20,in=100] (7.5,2);
\node[text=\colorS] at (5.6,4) {$S_1^\bullet$};

\end{tikzpicture}
	\caption{The arrangement $\A_2^{-1,-1}$ and the conic tangent to the six lines $S_3^\bullet,S_4^\bullet,S_5^\bullet,S_7^\bullet,S_8^\bullet,S_{10}^\bullet$.\label{fig:ZP2pt5_mm}}
\end{figure}
\vfill

\newpage\mbox{}

\bibliographystyle{alpha}
\bibliography{biblio}

\begin{thebibliography}{ABCAGBMB17}

\bibitem[AB94a]{artal:char_var}
Enrique Artal~Bartolo.
\newblock Combinatorics and topology of line arrangements in the complex
  projective plane.
\newblock {\em Proc. Amer. Math. Soc.}, 121(2):385--390, 1994.

\bibitem[AB94b]{Artal:couples}
Enrique Artal~Bartolo.
\newblock Sur les couples de {Z}ariski.
\newblock {\em J. Algebraic Geom.}, 3(2):223--247, 1994.

\bibitem[AB14]{artal:position}
Enrique Artal~Bartolo.
\newblock Topology of arrangements and position of singularities.
\newblock {\em Ann. Fac. Sci. Toulouse Math. (6)}, 23(2):223--265, 2014.

\bibitem[ABCAGBMB17]{ACGM:ZP}
Enrique Artal~Bartolo, Jos\'e~Ignacio Cogolludo-Agust{\'i}n, Beno{\^i}t
  Guerville-Ball\'e, and Miguel Marco-Buzun\'ariz.
\newblock An arithmetic {Z}ariski pair of line arrangements with non-isomorphic
  fundamental group.
\newblock {\em Rev. R. Acad. Cienc. Exactas F\'\i s. Nat. Ser. A Math. RACSAM},
  111(2):377--402, 2017.

\bibitem[ABCAT08]{ACT:survey}
Enrique Artal~Bartolo, Jos{\'e}~I. Cogolludo-Agust{\'{\i}}n, and Hiro-o
  Tokunaga.
\newblock A survey on {Z}ariski pairs.
\newblock In {\em Algebraic geometry in {E}ast {A}sia---{H}anoi 2005},
  volume~50 of {\em Adv. Stud. Pure Math.}, pages 1--100. Math. Soc. Japan,
  Tokyo, 2008.

\bibitem[ABCRCAMB05]{ACCM:real_ZP}
Enrique Artal~Bartolo, Jorge Carmona~Ruber, Jos{\'e}~I.
  Cogolludo-Agust{\'{\i}}n, and Miguel Marco~Buzun{\'a}riz.
\newblock Topology and combinatorics of real line arrangements.
\newblock {\em Compos. Math.}, 141(6):1578--1588, 2005.

\bibitem[ABFGB17]{AFG:invariant}
Enrique Artal~Bartolo, Vincent Florens, and Beno\^it Guerville-Ball\'e.
\newblock A topological invariant of line arrangements.
\newblock {\em Ann. Sc. Norm. Super. Pisa Cl. Sci.}, XVII(3):949--968, 2017.

\bibitem[ABGBVS17]{GBVS:real_pi1}
Enrique Artal~Bartolo, Beno\^it Guerville-Ball\'e, and Juan Viu-Sos.
\newblock Fundamental groups of real line arrangements and torsion in the
  {LCS}.
\newblock To appear in {\em Experimental Mathematics}. Available at
  \texttt{arXiv:1704.04152}, (2017).

\bibitem[Arv92]{arvola}
William~A. Arvola.
\newblock The fundamental group of the complement of an arrangement of complex
  hyperplanes.
\newblock {\em Topology}, 31(4):757--765, 1992.

\bibitem[BS17]{BS:real_monodromy}
Pauline Bailet and Simona Settepanella.
\newblock Homology graph of real arrangements and monodromy of {M}ilnor fiber.
\newblock {\em Adv. in Appl. Math.}, 90:46--85, 2017.

\bibitem[CR03]{Car_Thesis}
Jorge Carmona-Ruber.
\newblock {\em {Monodrom\'ia de trenzas de curvas algebraicas planas}}.
\newblock PhD thesis, Universidad de Zaragoza, 2003.

\bibitem[CS95]{CS:milnor}
Daniel~C. Cohen and Alexander~I. Suciu.
\newblock On {M}ilnor fibrations of arrangements.
\newblock {\em J. London Math. Soc. (2)}, 51(1):105--119, 1995.

\bibitem[CS99]{CS:charcateristic}
Daniel~C. Cohen and Alexander~I. Suciu.
\newblock Characteristic varieties of arrangements.
\newblock {\em Math. Proc. Cambridge Philos. Soc.}, 127(1):33--53, 1999.

\bibitem[Dim15]{Dimca:Monodromy}
Alexandru Dimca.
\newblock Monodromy of triple point line arrangements.
\newblock In {\em Singularities in geometry and topology 2011}, volume~66 of
  {\em Adv. Stud. Pure Math.}, pages 71--80. Math. Soc. Japan, Tokyo, 2015.

\bibitem[Dim17]{Dimca:book}
Alexandru Dimca.
\newblock {\em Hyperplane arrangements}.
\newblock Universitext. Springer, Cham, 2017.
\newblock An introduction.

\bibitem[DP03]{DP:Hypersurface_complement}
Alexandru Dimca and Stefan Papadima.
\newblock Hypersurface complements, {M}ilnor fibers and higher homotopy groups
  of arrangments.
\newblock {\em Ann. of Math. (2)}, 158(2):473--507, 2003.

\bibitem[GB16]{Guerville:ZP}
Beno{\^{\i}}t Guerville-Ball{\'e}.
\newblock An arithmetic {Z}ariski 4-tuple of twelve lines.
\newblock {\em Geom. Topol.}, 20(1):537--553, 2016.

\bibitem[GB17]{Guerville:multiplicativity}
Beno\^it Guerville-Ball\'e.
\newblock Multiplicativity of the $\mathcal{I}$-invariant and topology of glued
  arrangements.
\newblock {\em J. Math. Soc. Japan}, 70(1):215 -- 227, 2017.

\bibitem[GBM16]{GM:invariant}
Beno\^it Guerville-Ball\'e and Jean-Baptiste Meilhan.
\newblock A linking invariant for algebraic curves.
\newblock Available at \texttt{arXiv:1602.04916}, 2016.

\bibitem[Gr{\"u}09]{Grunbaum}
Branko Gr{\"u}nbaum.
\newblock {\em Configurations of points and lines}, volume 103 of {\em Graduate
  Studies in Mathematics}.
\newblock American Mathematical Society, Providence, RI, 2009.

\bibitem[Hir93]{Eriko}
Eriko Hironaka.
\newblock Abelian coverings of the complex projective plane branched along
  configurations of real lines.
\newblock {\em Mem. Amer. Math. Soc.}, 105(502):vi+85, 1993.

\bibitem[Kam33]{vanKampen}
Egbert R.~Van Kampen.
\newblock On the {F}undamental {G}roup of an {A}lgebraic {C}urve.
\newblock {\em Amer. J. Math.}, 55(1-4):255--260, 1933.

\bibitem[Lib02]{Libgober:Monodromy}
Anatoly Libgober.
\newblock Eigenvalues for the monodromy of the {M}ilnor fibers of arrangements.
\newblock In {\em Trends in singularities}, Trends Math., pages 141--150.
  Birkh\"auser, Basel, 2002.

\bibitem[Lib12]{Libgober:doubles_triples}
Anatoly Libgober.
\newblock On combinatorial invariance of the cohomology of the {M}ilnor fiber
  of arrangements and the {C}atalan equation over function fields.
\newblock In {\em Arrangements of hyperplanes---{S}apporo 2009}, volume~62 of
  {\em Adv. Stud. Pure Math.}, pages 175--187. Math. Soc. Japan, Tokyo, 2012.

\bibitem[Moi81]{Moishezon}
B.~G. Moishezon.
\newblock Stable branch curves and braid monodromies.
\newblock In {\em Algebraic geometry ({C}hicago, {I}ll., 1980)}, volume 862 of
  {\em Lecture Notes in Math.}, pages 107--192. Springer, Berlin-New York,
  1981.

\bibitem[NY12]{NY:connectivity}
Shaheen Nazir and Masahiko Yoshinaga.
\newblock On the connectivity of the realization spaces of line arrangements.
\newblock {\em Ann. Sc. Norm. Super. Pisa Cl. Sci. (5)}, 11(4):921--937, 2012.

\bibitem[OS80]{OS:combinatorics}
Peter Orlik and Louis Solomon.
\newblock Combinatorics and topology of complements of hyperplanes.
\newblock {\em Invent. Math.}, 56(2):167--189, 1980.

\bibitem[OT92]{OrlikTerao92}
Peter Orlik and Hiroaki Terao.
\newblock {\em Arrangements of hyperplanes}, volume 300 of {\em Grundlehren der
  Mathematischen Wissenschaften [Fundamental Principles of Mathematical
  Sciences]}.
\newblock Springer-Verlag, Berlin, 1992.

\bibitem[PS17]{PapadimaSuciu:doubles_triples}
Stefan Papadima and Alexander~I. Suciu.
\newblock The {M}ilnor fibration of a hyperplane arrangement: from modular
  resonance to algebraic monodromy.
\newblock {\em Proc. Lond. Math. Soc. (3)}, 114(6):961--1004, 2017.

\bibitem[Ryb11]{Rybnikov}
Gregory~L. Rybnikov.
\newblock On the fundamental group of the complement of a complex hyperplane
  arrangement.
\newblock {\em Funktsional. Anal. i Prilozhen.}, 45(2):71--85, 2011.
\newblock Preprint available at {\tt arXiv:math.AG/9805056}.

\bibitem[Suc01]{Suciu}
Alexander~I. Suciu.
\newblock Fundamental groups of line arrangements: enumerative aspects.
\newblock In {\em Advances in algebraic geometry motivated by physics
  ({L}owell, {MA}, 2000)}, volume 276 of {\em Contemp. Math.}, pages 43--79.
  Amer. Math. Soc., Providence, RI, 2001.

\bibitem[Yos13]{Yoshinaga:milnor_fibers}
Masahiko Yoshinaga.
\newblock Milnor fibers of real line arrangements.
\newblock {\em J. Singul.}, 7:220--237, 2013.

\bibitem[Yos15]{Yoshinaga:resonant_bands}
Masahiko Yoshinaga.
\newblock Resonant bands, local systems and {M}ilnor fibers of real line
  arrangements.
\newblock In {\em Combinatorial methods in topology and algebra}, volume~12 of
  {\em Springer INdAM Ser.}, pages 143--148. Springer, Cham, 2015.

\bibitem[Zar29]{zariski}
Oscar Zariski.
\newblock On the {P}roblem of {E}xistence of {A}lgebraic {F}unctions of {T}wo
  {V}ariables {P}ossessing a {G}iven {B}ranch {C}urve.
\newblock {\em Amer. J. Math.}, 51(2):305--328, 1929.

\bibitem[Zar31]{zariski1}
Oscar Zariski.
\newblock On the irregularity of cyclic multiple planes.
\newblock {\em Ann. of Math. (2)}, 32(3):485--511, 1931.

\bibitem[Zar36]{zariski2}
Oscar Zariski.
\newblock On the {P}oincar\'e {G}roup of {R}ational {P}lane {C}urves.
\newblock {\em Amer. J. Math.}, 58(3):607--619, 1936.

\end{thebibliography}

\end{document}